\newcommand{\sups}[1]{\ensuremath{^{\textrm{#1}}}}
\newcommand{\subs}[1]{\ensuremath{_{\textrm{#1}}}}
\newcommand{\bd}[1]{\boldsymbol{#1}}%
\newcommand{\mbf}[1]{\mathbf{#1}}%
\newcommand{\mtn}{\mathds N}%
\newcommand{\mtz}{\mathds Z}%
\newcommand{\mtr}{\mathds R}%
\DeclareMathOperator{\sign}{sign}
\DeclareMathOperator{\Vol}{Vol}
\DeclarePairedDelimiter\norm{\lVert}{\rVert}
\DeclarePairedDelimiter\abs{\lvert}{\rvert}
\DeclarePairedDelimiter\expect{\langle}{\rangle}
\let\oldabs\abs
\def\abs{\@ifstar{\oldabs}{\oldabs*}}
\let\oldnorm\norm
\def\norm{\@ifstar{\oldnorm}{\oldnorm*}}
\let\oldexpect\expect
\def\expect{\@ifstar{\oldexpect}{\oldexpect*}}
\theoremstyle{plain}
\newtheorem{lemma}{Lemma}[section]
\newtheorem{corollary}[lemma]{Corollary}
\newtheorem{proposition}[lemma]{Proposition}
\newtheorem*{theorem}{Theorem}
\theoremstyle{remark}
\newtheorem*{remark}{Remark}
\theoremstyle{definition}
\newtheorem{defn}[lemma]{Definition}
\newtheorem*{example}{Example}
\newenvironment{manualtheorem}[1]{%
  \manualtheoreminner
}{\endmanualtheoreminner}
\newenvironment{manualprop}[1]{%
  \manualpropinner
}{\endmanualpropinner}
\newcommand{\defeq}{\vcentcolon=}
\title{Closed embedded self-shrinkers of mean curvature flow}
\author{Oskar Riedler\footnote{Westfälische Wilhelms-Universität Münster, Mathematisches Institut, \href{mailto:oskar.riedler@uni-muenster.de}{oskar.riedler@uni-muenster.de}}}
\date{\normalsize \today}
\begin{document}
\maketitle

\begin{abstract}
In this article we show the existence of closed embedded self-shrinkers in $\mtr^{n+1}$ that are topologically of type $S^1\times M$, where $M\subset S^n$ is any isoparametric hypersurface in $S^n$ for which the multiplicities of the principle curvatures agree. This yields new examples of closed self-shrinkers, for example self-shrinkers of topological type $S^1\times S^k\times S^k\subset \mtr^{2k+2}$ for any $k$. If the number of distinct principle curvatures of $M$ is one the resulting self-shrinker is topologically $S^1\times S^{n-1}$ and the construction recovers Angenent's shrinking doughnut \cite{angenent-92}.\\
\textbf{Keywords:} Mean curvature flow, self-shrinker, isoparametric foliations.
\end{abstract} 

\section{Introduction}
Consider a compact $n$-dimensional manifold $\Sigma$ that is smoothly immersed in $\mtr^{n+1}$ via a map $F_0:\Sigma\to \mtr^{n+1}$. A \textit{mean curvature flow of $F_0(\Sigma)$} is a family of smooth immersions $F_t: \Sigma\to\mtr^{n+1}$ where $t\in \mtr$ varies over some interval and for which
$$\partial_t F_t(x) = \bd{H}_t(x)$$
holds for all $t$. Here $\bd H_t(x)$ is the mean curvature of $F_t(\Sigma)$ at $F_t(x)$. In other words $F_t(\Sigma)$ flows along its mean curvature vector in $\mtr^{n+1}$. Due to compactness of $\Sigma$ such a flow necessarily becomes singular in finite time, see e.g. \cite{husken-monotonicity-90}.

By the work of Huisken \cite{husken-monotonicity-90}, Ilmanen \cite{ilmanen-95} and White \cite{white-94} rescaling $F_t(M)$ near the singular time in an appropriate way leads to weak limits that are so called \textit{self-shrinkers}, that is immersed manifolds whose mean curvature flow is given by dilations. These self-shrinkers then take a special role in the singularity theory of the mean curvature flow.

In this paper we use the theory of isoparametric foliations of the sphere $S^n$ to construct new examples of closed embedded self-shrinkers. Concretely we show:

\begin{manualtheorem}{A}\label{theorem}
For any isoparametric hypersurface $M$ in $S^n$, $n\geq 2$, for which the multiplicities $m_1$ and $m_2$ of the principal curvatures agree there is a closed embedded self-shrinker of topological type $S^1\times M$ in $\mtr^{n+1}$. This hypersurface is a union of homothetic copies of the leaves of the isoparametric foliation of $S^n$ associated to $M$.
\end{manualtheorem}

The theory of isoparametric hypersurfaces of the sphere $S^n$ is very rich, so the above theorem can be used to produce self-shrinkers of novel topology (for example $S^1\times S^{k}\times S^{k}\subset \mtr^{2k+2}$ for $k\in\mtn$ or $S^1\times SO(3)/(\mtz_2\times\mtz_2)\subset\mtr^5$). These hypersurfaces have previously been applied to the problem of mean curvature self-shrinkers by Chang and Spruck \cite{spruck-17}, who constructed for any isoparametric hypersurface $M$ of the sphere $S^n$ a self-shrinking end that is asymptotic to the cone $C(M)$.

The terminology of isoparametric hypersurfaces will be recalled in Section \ref{sec: iso}. The proof of Theorem \ref{theorem} works via a reduction of the shrinker condition to a geodesic equation in a two-dimensional manifold. Simple periodic solutions of this ordinary differential equation are then established by shooting methods very similar to \cite{angenent-92}, although the equation itself is quite different.

Denoting with $g$ the number of principal curvatures of a regular leaf of the isoparametric foliation one has in the case $g=1$ that the leaves become the latitudes of a sphere. The hypersurfaces found by Theorem \ref{theorem} are then rotationally invariant under the $O(n)$ action on $\mtr^{n+1}$ and topologically of type $S^1\times S^{n-1}$, the same topological type as the \say{shrinking donut} found by Angenent \cite{angenent-92}. It is currently an open question whether there exist embedded rotationally invariant self-shrinkers of type $S^1\times S^{n-1}$ in $\mtr^{n+1}$ other than Angenent's example, so we also remark:

\begin{manualprop}{B}\label{prop: g=1-angenent}
In the case $g=1$ the construction of Theorem \ref{theorem} gives Angenent's shrinking doughnut \cite{angenent-92}.
\end{manualprop}

The structure of this article is as follows: In section \ref{sec: iso+reduc+ode} we recall the necessary facts about isoparametric foliations, explain the reduction of the self-shrinker problem to an ordinary differential equation, and remark on some elementary properties of the resulting equation. In section \ref{sec: shooting} the shooting argument is presented and Theorem \ref{theorem} is shown with the exception of a technical proposition. Proposition \ref{prop: g=1-angenent} is also proved in section \ref{sec: shooting}. The aforementioned technical proposition is shown in section \ref{sec: proof}. 

The author would like to thank Peter McGrath for interesting and helpful discussions. The author gratefully acknowledges the
support of Germany's Excellence Strategy EXC 2044 390685587, Mathematics M\"unster: Dynamics-Geometry-Structure.

\section{Reduction and geodesic equation}\label{sec: iso+reduc+ode}

In subsection \ref{sec: iso} we recall some basic definitions and facts about isoparametric foliations. Subsection \ref{sec: reduc} explains the reduction procedure: a result due to Angenent \cite{angenent-92} gives that a hypersurface is a self-shrinker if and only if it is minimal in some metric $g\subs{Ang}$. The reduction theorem of Palais and Terng \cite{palais-terng-86} is then applied in order to reduce the shrinker property to a geodesic equation on an open subset of $\mtr^2$ equipped with a special metric. In subsection \ref{sec: geod} we present this geodesic equation and simplify its form.

\subsection{Isoparametric foliations on spheres}\label{sec: iso}

\begin{defn}
Let $M$ be a smooth Riemannian manifold. A smooth function $f:M\to\mtr$ is called \textit{isoparametric} if there are smooth functions $a, b: f(M)\to\mtr$ so that
\begin{align}
\|\nabla f\|^2 = a\circ f, \label{eq: iso-1}\\
\Delta f = b\circ f. \label{eq: iso-2}
\end{align}

\end{defn}
The geometric meaning of condition (\ref{eq: iso-1}) is that the fibres of $f$ form a (singular) transnormal system\footnote{Meaning if a geodesic is perpedicular to a leaf at any time that the geodesic then remains perpendicular to all leaves it intersects, cf. \cite{thorbergsson-foliations} for more details.}, in particular they are all equidistant to each other. Condition (\ref{eq: iso-2}) implies that the regular fibres of the foliation are of constant mean curvature in $M$ (cf. \cite{dominguez-notes}). If $M$ is a space-form one even has that the individual principal curvatures of such a fibre are constant along the fibre. Foliations that arise from the fibres of an isoparametric function are called \textit{isoparametric foliations}. A hypersurface is called an \textit{isoparametric hypersurface} if it is a regular leaf of an isoparametric foliation.

The classification of isoparametric foliations in spheres was initiated by Cartan in \cite{cartan-spheres-1, cartan-spheres-2, cartan-spheres-3}. This has proven to be a difficult problem and, despite a long and active history of research, it is in part still open. A significant part of the structure theory of these foliations was developed by Münzner in two seminal papers \cite{muenzner-1, muenzner-2}.

We review now some structural facts of isoparametric foliations of $S^n$, cf. \cite{muenzner-1, muenzner-2, wang-minimal, fkm, dominguez-notes} for proofs and further information:
\begin{enumerate}[label=(\roman*)]
\item The principal curvatures of any regular fibre are constant along the fibre.
\item The number of distinct principal curvatures of a regular fibre is the same for any two regular fibres. Denoting this number by $g$ one has that $g\in\{1,2,3,4,6\}$.
\item There are precisely two singular fibres $V_1$ and $V_2$. One has $\mathrm{dist}(V_1,V_2) = \frac\pi g$. These singular fibres are closed and minimal submanifolds of $S^n$.
\item Any regular fibre is of the form $M_\varphi \defeq \{ x\in S^n \mid \mathrm{dist}(x, V_1) = \varphi\}$, where $\varphi\in (0,\frac\pi g)$. These fibres are all diffeomorphic to oneanother.
\item The principal curvatures of a regular fibre $M_\varphi$ are of the form $\cot(\varphi), \cot(\varphi +\frac \pi g),..., \cot(\varphi+\frac{(g-1)\pi} g)$. Denoting with $m_1\defeq \mathrm{codim}(V_1)-1$, $m_2\defeq\mathrm{codim}(V_2)-1$ the multiplicities of these principle curvatures are $m_1, m_2, m_1, ...$ . In particular $n-1=\frac g2(m_1+m_2)$ and $m_1=m_2$ if $g$ is odd.
\item For $\varphi^*=\frac 2g\arctan(\sqrt{m_1/m_2})$ the hypersurface $M_{\varphi^*}$ is minimal in $S^n$.
\item The volume of a regular fibre is given by
\begin{equation}
\Vol(M_\varphi)= c \sin(\frac g2\varphi)^{m_1}\cos(\frac g2\varphi)^{m_2}\label{eq: vol}
\end{equation}
where $c$ a positive constant that does not vary in $\varphi$ (but will be different for different foliations).
\item There is a homogenous polynomial $F:\mtr^{n+1}\to\mtr$ (called the \textit{Cartan-Münzner polynomial}) of degree $g$ so that $F\lvert_{S^n}= \cos(g\varphi)$ and for which one has:
$$\|\nabla F(x)\|^2 = g^2 \|x\|^{2g-2},\qquad \Delta F(x) = \frac{g^2}{2}(m_1-m_2) \|x\|^{g-2}.$$
\end{enumerate}

\begin{example}The cases $g\in\{1,2,3\}$ were first classified by Cartan. The list of homogenous examples was completed by Takagi and Takahashi \cite{takagi-takahashi-72} based on previous work by Hsiang and Lawson \cite{hsiang-lawson-71}, here an example is called homogenous if the the fibres of the foliation arise as the orbits of an isometric action on $S^n$. The homogenous cases always arise as the principal orbit of the isotropy representation of a Riemannian symmetric space of rank 2, see \cite{dominguez-notes} for more detailed remarks and references also for the other cases.
\begin{enumerate}[label=(\roman*)]
\item When $g=1$ the isoparametric foliation is congruent (that is equal up to an isometric transformation) to the latitudes of the sphere $S^n$. One has $V_1 =\{ (1 , 0 , ...,0)\}$, $V_2=\{(-1,0,...,0)\}$ and $M_\varphi = \{\cos(\varphi)\}\times \sin(\varphi) S^{n-1} $ for $\varphi\in(0,\pi)$. These examples are homogenous and $m_1=m_2=n-1$. 
\item When $g=2$ the isoparametric foliation is congruent to the foliation by Clifford tori $S^{m_1}\times S^{m_2}$. One has $V_1  = \{ (0,...,0)\}\times S^{m_2}$, $V_2 = S^{m_1}\times (0,...,0)$ and $M_\varphi = \sin(\varphi) S^{m_1}\times \cos(\varphi) S^{m_2}$ for $\varphi\in(0,\frac\pi 2)$. The integers $m_1, m_2$ are arbitrary so long as $m_1+m_2=n-1$, in particular $m_1\neq m_2$ is possible. The fibres are the orbits of an isometric $O(m_1+1)\times O(m_2+1)$ action on $S^n$.
\item When $g=3$ one has $m_1=m_2 \in \{1,2,4,8\}$. The fibres of the foliation arise as the distance tubes of certain embeddings of the projective planes $\mathds{FP}^2$ ($=V_1$) in $S^{3m_1+1}$ where $\mathds F\in \{\mathds{R,C,H,O}\}$ is one of the real division algebras or the octonions. These examples are homogenous and the fibres are diffeomorphic to $SO(3)/(\mtz_2\times\mtz_2)$, $SU(2)/\mathds{T}^2$, $Sp(3)/Sp(1)^3$, $F_4/Spin(8)$ respectively.
\item For $g=4$ there is an infinite family, introduced by Ferus, Karcher, and Münzner in \cite{fkm}, which contains both homogenous as well as inhomogenous examples. Two additional homogenous cases beyond this family exist, else all examples belong to this family. Here $m_1\neq m_2$ is possible.
\item For $g=6$ one has $m_1=m_2\in\{1,2\}$, as was shown by Abresch \cite{abresch-83}. For both cases there exist homogenous examples. If $m_1=m_2=1$ it was shown by Dorfmeister and Neher \cite{dorfmeister-neher} that the homogenous example is the only one.
\end{enumerate}
\end{example}

\subsection{Reduction for self-shrinkers}\label{sec: reduc}

\begin{defn}\label{def: f-inv}
Let $F$ be the Cartan-Münzner polynomial of an isoparametric foliation of $S^n$. Define:
\begin{gather*}
\mathbf{f}:\mtr^{n+1}-\mtr_{\ge 0}\cdot (V_1\cup  V_2) \to (0,\infty)\times(0,\frac\pi g),\qquad x\mapsto \left(\|x\|, \frac{\arccos( F({x}/{\|x\|}))}{g} \right).
\end{gather*}
A set $X\subset\mtr^{n+1}$ is called \textit{$\mathbf f$-invariant} if there is a set $N\subset(0,\infty)\times(0,\frac\pi g)$ so that $X=\mathbf{ f}^{-1}(N)$. Compare with the notion of $F$-invariant in \cite{wang-minimal}.
\end{defn}

Note that the $\mathbf f$-invariant sets are precisely those sets that are unions of homothetic copies of the regular fibres of the isoparametric foliation - that is unions of sets of the form $r\cdot M_\varphi$ for $(r,\varphi)\in(0,\infty)\times (0,\frac\pi g)$. (Recall that for $\|x\|=1$ one has $F(x)=\cos(g\varphi)$, where $\varphi$ is the distance to the singular fibre $V_1$. So $\mbf f^{-1}(r,\varphi) =  \{x\in\mtr^{n+1}\mid \|x\|=r, \frac{x}{\|x\|}\in M_\varphi\}$.)

Recall (cf. \cite{angenent-92, drugan-survey}) that a closed submanifold $X\subset \mtr^{n+1}$ is a \textit{self-shrinker under mean curvature flow} (short: \textit{self-shrinker}) if and only if there is an $\tau>0$ such that $X$ is a minimal hypersurface in $\mtr^{n+1}$ equipped with the metric (which we refer to as the \textit{shrinker metric}):
$$g\subs{sh}=e^{-\frac{\tau\|x\|^2}{n}} \sum_{i=1}^n dx_i^2=e^{-\frac{\tau\|x\|^2}n}g\subs{Euc}.$$
The parameter $\tau$ is related to the extinction time of $X$. By rescaling $X$ if necessary we take $\tau=1$ in what follows.
\begin{proposition}Equipping $\mtr^{n+1}-\mtr_{\ge 0}\cdot (V_+\cup  V_-)$ with the shrinker metric $g\subs{sh}$ (with $\tau=1$) and $(0,\infty)\times(0,\frac\pi g)$ with the metric $g\subs{Subm}\defeq e^{-\frac{r^2}n} (dr^2+ r^2 d\varphi^2)$ one has:
\begin{enumerate}[label=(\roman*)]
\item $\mathbf f$ is a surjective and proper Riemannian submersion.
\item The mean curvature vector of a fiber $\mathbf f^{-1}(r,\varphi)$ is given by
$$e^{\frac{r^2}{2n}} \left((\frac1r-\frac rn)\nu_r + \frac{H(\varphi)}r\nu_\varphi\right),$$
where $H(\varphi)$ is the mean curvature of $M_\varphi\subset S^n$, $\nu_\varphi$ is the unit normal of $rM_\varphi$ in $rS^n$ equipped with $g\subs{sh}$, and $\nu_r$ is the unit normal of $rS^n$ in $\mtr^{n+1}$ equipped with $g\subs{sh}$.
\end{enumerate}
\end{proposition}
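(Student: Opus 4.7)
My plan for (i) is to handle surjectivity and properness directly from the definition of $\mbf f$, then reduce the Riemannian submersion condition to a computation of Euclidean norms of $\nabla r$ and $\nabla\varphi$ using the Cartan-M\"unzner identities. Surjectivity is immediate since every regular leaf $rM_\varphi$ is non-empty, and properness follows because the preimage of a compact $K\subset (0,\infty)\times(0,\tfrac\pi g)$ lies in a Euclidean annulus with spherical projection bounded away from the singular leaves $V_1,V_2$, hence is closed and bounded in $\mtr^{n+1}$. To verify the Riemannian submersion property I would check that $\|\nabla r\|^2\subs{sh}=g\subs{Subm}^{-1}(dr,dr)$ and analogously for $d\varphi$. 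Since $\|\nabla r\|\subs{Euc}=1$ is immediate and $\expect{\nabla r,\nabla\varphi}\subs{Euc}=0$ will follow from $\nabla\varphi$ being spherical-tangential (see below), only $\|\nabla\varphi\|\subs{Euc}$ requires genuine work. The plan is to combine the Cartan-M\"unzner identity $\|\nabla F\|^2=g^2\|x\|^{2g-2}$ with Euler's homogeneity relation $\expect{x,\nabla F}=gF(x)$: decomposing $\nabla F$ into radial and spherical-tangential parts, Euler's identity pins down the radial norm, Pythagoras then yields $\|\nabla F^\perp\|=g\|x\|^{g-1}\sin(g\varphi)$ (using $F/\|x\|^g=\cos(g\varphi)$), and a short calculation produces $\|\nabla\varphi\|^2\subs{Euc}=1/r^2$. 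Multiplying the two Euclidean squared norms by $e^{r^2/n}$ then matches $g\subs{Subm}^{-1}$ exactly.

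For (ii), I would first compute the Euclidean mean curvature vector of the codimension-$2$ submanifold $rM_\varphi$ via the tower of inclusions $rM_\varphi\subset rS^n\subset\mtr^{n+1}$: tracing the umbilic second fundamental form $\mathrm{II}_{rS^n\subset\mtr^{n+1}}=-\tfrac{1}{r}\expect{\cdot,\cdot}\tilde\nu_r$ over a fiber-tangent orthonormal frame gives a radial contribution along $\tilde\nu_r$, while the intrinsic mean curvature of $rM_\varphi$ in $rS^n$ contributes a multiple of $\tilde\nu_\varphi$ proportional to $H(\varphi)/r$ (using that principal curvatures scale inversely with $r$). I would then apply the standard conformal-change formula for the mean curvature vector: if $\tilde g=e^{2\phi}g$ and $\Sigma^k\subset M$, then $\tilde{\mbf H}=e^{-2\phi}(\mbf H-k(\nabla\phi)^\perp)$. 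Setting $\phi=-r^2/(2n)$ so that $e^{2\phi}$ is the shrinker conformal factor, one has $\nabla\phi=-x/n$ entirely along $\tilde\nu_r$ (as $x$ is normal to the fiber), so only the radial part of the mean curvature is modified. Finally, rewriting Euclidean-unit normals in terms of shrinker-unit normals through $\nu_\bullet=e^{r^2/(2n)}\tilde\nu_\bullet$ produces the overall prefactor $e^{r^2/(2n)}$ of the stated expression.

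The step I expect to require the most care is the identification of $\|\nabla\varphi\|\subs{Euc}$ in (i), where the Cartan-M\"unzner identity, Euler's homogeneity relation, and the definition of $\varphi$ as $\arccos(F/\|x\|^g)/g$ must be combined in the right way. The remaining pieces -- the mean curvature computation via the tower in (ii) and the conformal change of the resulting expression -- are then essentially mechanical, amounting to careful bookkeeping of conformal factors and of the scaling of principal curvatures by $1/r$.
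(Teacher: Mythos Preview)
Your plan is correct; the paper in fact omits the proof entirely, remarking only that it ``is a standard calculation,'' and your outline is precisely the standard calculation one would carry out. The use of the Cartan--M\"unzner gradient identity together with Euler's homogeneity relation to obtain $\|\nabla\varphi\|_{\mathrm{Euc}}^2=1/r^2$, and the conformal-change formula $\tilde{\mathbf H}=e^{-2\phi}(\mathbf H-k(\nabla\phi)^\perp)$ applied to the tower $rM_\varphi\subset rS^n\subset\mtr^{n+1}$, are exactly the right ingredients.
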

The proof is a standard calculation and from (ii) one sees that that the mean curvature of the fibres of $\mathbf f$ form a basic field of the Riemannian submersion, meaning that it is the horizontal lift of a vector field on the base manifold. Riemannian submersions with this property are the key ingredient in the reduction theory developed by Palais and Terng in \cite{palais-terng-86}, recall:
\begin{theorem}[Palais-Terng, cf. Theorem 4 in \cite{palais-terng-86}]
Let $\pi: (E, g_E)\to (B, g_B)$ be a Riemannian submersion for which the mean curvatures of the fibres form a basic field, then for a $k$-dimensional submanifold $X\subset B$ one has that $\pi^{-1}(X)$ is minimal in $E$ if and only if $X$ is minimal in $(B, V^{2/k}g_B)$. Here $V^{2/k}g_B$ is the metric given by
$$(V^{2/k}g_B)\,(b) = \Vol_{g_E}(\pi^{-1}(b))^{2/k}\, g_B(b).$$
\end{theorem}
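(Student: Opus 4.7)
The plan is to compute the mean curvature vector of $Y := \pi^{-1}(X) \subset E$ by splitting $TY$ into horizontal and vertical parts, and then to absorb the resulting vertical contribution into a conformal rescaling of the base metric.

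\textbf{Step 1 (Decomposition of $\vec{H}_Y$).} At $y \in Y$ with $b = \pi(y) \in X$, I would pick an orthonormal frame $\bar e_1,\ldots,\bar e_k$ of $T_b X$, its horizontal lift $e_1,\ldots,e_k$, and an orthonormal frame $v_1,\ldots,v_d$ of the vertical distribution; together these form an orthonormal frame of $T_yY$, and the normal bundle of $Y$ in $E$ coincides with the horizontal lift of the normal bundle of $X$ in $B$. For a normal $\nu$ to $Y$ with horizontal projection $\bar\nu$ normal to $X$, O'Neill's identity for Riemannian submersions gives $\langle \nabla^E_{e_i} e_i, \nu\rangle_{g_E} = \langle \nabla^B_{\bar e_i} \bar e_i, \bar\nu\rangle_{g_B}$; summing over $i$ and over $j$ then yields
$$\langle \vec H_Y, \nu\rangle_{g_E} = \langle \vec H_X, \bar\nu\rangle_{g_B} + \langle \vec H_{\mathrm{fib}}, \nu\rangle_{g_E}.$$
The basic-field hypothesis means $\vec H_{\mathrm{fib}}$ descends to a vector field $\bar H := \pi_*\vec H_{\mathrm{fib}}$ on $B$, so $Y$ is minimal in $E$ if and only if $(\vec H_X + \bar H)^{\perp_X} = 0$.

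\textbf{Step 2 (Identify $\bar H = -\nabla\log V$).} The first variation of volume applied to the fibre $\pi^{-1}(b)$, flowed along a basic horizontal field $\tilde X$ projecting to $\bar X$, yields $\bar X(V)(b) = -\int_{\pi^{-1}(b)} \langle \tilde X, \vec H_{\mathrm{fib}}\rangle_{g_E}$. Since both $\tilde X$ and $\vec H_{\mathrm{fib}}$ are basic horizontal, the integrand is constant along the fibre and equals $\langle \bar X, \bar H\rangle_{g_B}(b)$, giving $\bar X(V)(b) = -V(b)\,\langle \bar X, \bar H\rangle_{g_B}(b)$. Hence $\nabla V = -V \bar H$, so $\bar H = -\nabla \log V$.

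\textbf{Step 3 (Conformal change of mean curvature).} Under $\tilde g = e^{2\phi}g_B$, the standard computation shows the mean curvature vector of a $k$-dimensional submanifold transforms by $\tilde{\vec H}_X = e^{-2\phi}(\vec H_X - k(\nabla\phi)^{\perp})$. Setting $\phi = \tfrac{1}{k}\log V$ yields $\tilde g = V^{2/k}g_B$ and the factor of $k$ cancels, so $\tilde{\vec H}_X = V^{-2/k}(\vec H_X - (\nabla \log V)^{\perp})$. Thus $X$ is minimal in $(B, V^{2/k}g_B)$ iff $\vec H_X = (\nabla \log V)^{\perp} = -\bar H^{\perp}$, which by Step 1 is exactly $\vec H_Y = 0$.

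The bulk of the work is classical Riemannian geometry (O'Neill's formulae, first variation of volume, conformal change of mean curvature), with the only subtlety being careful bookkeeping of signs and of the factor $k$; the essential conceptual point is that the basic-field hypothesis is precisely what makes the vertical part of $\vec H_Y$ descend to a well-defined vector field on $B$, so that it can then be recast as a conformal weight on $g_B$.
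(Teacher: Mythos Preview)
The paper does not actually prove this theorem; it is quoted from \cite{palais-terng-86} and used as a black box in the reduction procedure. Your proposed argument is correct and is essentially the classical proof: decompose $\vec H_Y$ via O'Neill, identify the basic fibre mean curvature with $-\nabla\log V$ through the first variation of fibre volume, and then recognise the resulting condition on $X$ as minimality for the conformally rescaled metric $V^{2/k}g_B$. The sign and factor-of-$k$ bookkeeping in Steps 2 and 3 checks out, and the observation that the basic-field hypothesis is exactly what is needed for $\vec H_{\mathrm{fib}}$ to descend to $B$ is the right conceptual point.
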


Using (\ref{eq: vol}) one gets (up to a constant factor):
\begin{align*}
\Vol_{g\subs{sh}}(\mathbf f^{-1}(r,\varphi))^2 &= e^{-\frac{r^2(n-1)}n}\Vol_{g\subs{Euc}}(r\cdot M_\varphi)^2 = r^{2(n-1)}e^{-r^2(1-1/n)} \Vol_{S^n}(M_\varphi)^2\\
& =r^{2(n-1)}e^{-r^2(1-1/n)}\sin^{2m_1}(\frac g2\varphi)\cos^{2m_2}(\frac g2\varphi).
\end{align*}

The problem of finding $\mathbf f$-invariant hypersurfaces that are minimal with respect to $g\subs{sh}$ is then reduced to finding geodesic segments in $(0,\infty)\times(0,\frac\pi g)$ equipped with the metric
\begin{equation} r^{2n-2} e^{-r^2} \sin(\frac g2\varphi)^{2m_1}\cos(\frac g2\varphi)^{2m_2} (dr^2+r^2d\varphi^2).\label{eq: metric}\end{equation}
We conclude:
\begin{proposition}\label{prop: periodic-to-closed}
A $\mathbf f$-invariant hypersurface $X\subset \mtr^{n+1}$ is a closed self-shrinker in $\mtr^{n+1}$ if and only if $N\defeq \mathbf f(X)$ is a closed geodesic in $(0,\infty)\times(0,\frac\pi g)$ with respect to the metric (\ref{eq: metric}).
\end{proposition}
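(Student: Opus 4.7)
The plan is to simply stitch together the three ingredients assembled just before the statement: the Angenent reformulation of the shrinker condition as minimality with respect to $g\subs{sh}$, the properness of the submersion $\mathbf f$, and the Palais--Terng reduction theorem. Throughout the argument we write $X=\mathbf f^{-1}(N)$, which is well-defined by the assumed $\mathbf f$-invariance.

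First I would deal with the topological side. Since $X\subset\mtr^{n+1}$ has dimension $n$ and the fibres $\mathbf f^{-1}(r,\varphi)=r\cdot M_\varphi$ are of dimension $n-1$, the set $N\subset(0,\infty)\times(0,\frac\pi g)$ is a $1$-dimensional submanifold (so, a curve) of the base. Because $\mathbf f$ is proper, surjective, and continuous, $X$ is compact if and only if $N$ is compact; more specifically, a connected compact embedded $1$-submanifold of the base is necessarily diffeomorphic to $S^1$.

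Next I would treat the geometric side. By the characterisation recalled from Angenent, $X$ is a self-shrinker if and only if it is a minimal hypersurface in $(\mtr^{n+1}\setminus\mtr_{\ge 0}\cdot(V_1\cup V_2),g\subs{sh})$. The Riemannian submersion $\mathbf f:(\mtr^{n+1}\setminus\cdots,g\subs{sh})\to((0,\infty)\times(0,\frac\pi g),g\subs{Subm})$ has mean curvature vectors of the fibres forming a basic field, so the Palais--Terng theorem applies with $k=1$: the hypersurface $X=\mathbf f^{-1}(N)$ is minimal in $g\subs{sh}$ if and only if the $1$-dimensional submanifold $N$ is minimal in the conformally rescaled metric $V^{2}g\subs{Subm}$, where $V(r,\varphi)=\Vol_{g\subs{sh}}(\mathbf f^{-1}(r,\varphi))$. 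Minimality of a $1$-dimensional submanifold is just the geodesic condition (as an unparametrised curve).

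It then remains to identify $V^{2}g\subs{Subm}$ with the metric (\ref{eq: metric}). Using the volume formula computed above,
\[
V^{2}g\subs{Subm}= r^{2(n-1)}e^{-r^2(1-1/n)}\sin(\tfrac g2\varphi)^{2m_1}\cos(\tfrac g2\varphi)^{2m_2}\cdot e^{-r^2/n}(dr^2+r^2d\varphi^2),
\]
so the exponentials combine to $e^{-r^2}$, giving exactly (\ref{eq: metric}) up to the irrelevant positive constant from the volume. Combining the two equivalences and the compactness translation yields the proposition. The only mildly delicate point is verifying that the conformal factor from Palais--Terng is really $V^{2}$ rather than a fractional power, which works out because the relevant dimension $k=\dim N$ equals $1$; no serious obstacle is expected.
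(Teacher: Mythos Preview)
Your proposal is correct and matches the paper's approach exactly: the paper simply states the proposition as an immediate conclusion of the preceding discussion (Angenent's characterisation, the submersion proposition, the Palais--Terng theorem, and the volume computation), and you have spelled out precisely these ingredients. If anything you are slightly more explicit than the paper about the compactness equivalence via properness of $\mathbf f$, which the paper leaves implicit.
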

\subsection{Geodesic equation}\label{sec: geod}

For the metric (\ref{eq: metric}) one gets the following geodesic equation, where $\alpha$ denotes the angle between $\frac{dr}{dt}$ and $\frac{d\varphi}{dt}$:
\begin{align*}
\frac {dr}{dt}& = \cos\alpha\, G(r,\varphi), \\
\frac {d\varphi}{dt}& = \sin\alpha\, \frac{G(r,\varphi)}r, \\
\frac {d\alpha}{dt}& = \sin\alpha\,  r \, \partial_r \frac{G(r,\varphi)}r- \cos\alpha\,  \frac1{r} \, \partial_\varphi G(r,\varphi).
\end{align*}
Here
$$G(r,\varphi) = r^{-n+1}e^{r^2/2}\sin(\frac g2\varphi)^{-m_1}\cos(\frac g2\varphi)^{-m_2}.$$
Since we are not directly interested in the parametrisation of the geodesic but rather in its orbit we perform a substitution $\frac {dt\sups{new}}{dt\sups{old}} = \frac1rG(r,\varphi)$ to simplify the equation:
\begin{align*}
\frac {dr}{dt}&= r\cos\alpha, \\
\frac {d\varphi}{dt} &= \sin\alpha, \\
\frac {d\alpha}{dt} &= \sin\alpha\, (r^2-n)  + \frac g2 \cos\alpha\, (m_1\cot(\frac g2\varphi)-m_2 \tan(\frac g2\varphi)).
\end{align*}
We simplify once more by letting $\theta(t)\defeq \frac g2 \varphi(t)$, substituting $\frac {dt\sups{new}}{dt\sups{old}}= \frac g2\frac1{\sin(2\theta)}$, letting $\xi(t)\defeq \frac g2\ln(\sqrt{\frac2g} r(t))$ and $m\defeq \frac 2g n = m_1+m_2 +\frac2g$ to get:
\begin{align*}
\frac {d\xi}{dt} &= \cos\alpha\, \sin(2\theta), \\
\frac {d\theta}{dt} & = \sin\alpha\,\sin(2\theta), \tag{$*$}\\
\frac {d\alpha}{dt} &= \sin\alpha\,\sin(2\theta)(e^{\frac4g\xi}-m)+2\cos\alpha \,l(\theta).\end{align*}
Here $l(\theta)=m_1\cos^2(\theta)-m_2 \sin^2(\theta)$.

If $\theta'(t)\neq0$ then $\xi$ may be (locally) given the form of a graph over $\theta$. This graph obeys the following ODE:
\begin{equation}
\frac {d^2\xi}{d\theta^2}(\theta) = \frac{\xi''(t)}{\theta'(t)^2}- \frac{\xi'(t)}{\theta'(t)} \frac{\theta''(t)}{\theta'(t)^2}= - \left(1+ (\frac{d\xi}{d\theta})^2\right)\left(e^{\frac4g\xi}-m + 2 H(\theta)\frac{d\xi}{d\theta}\right). \tag{$**$}
\end{equation}
Here
$$H(\theta) = \frac{l(\theta)}{\sin(2\theta)} = \frac{m_1}2 \cot(\theta) - \frac{m_2}2\tan(\theta).$$

The ODE $(*)$ can be formulated for all initial conditions $(\xi,\theta,\alpha)\in\mtr^3$. But in coordinates $(\xi,\theta)$ the domain $(0,\infty)\times(0,\frac\pi g)$ has been transformed to $\mtr\times(0,\frac\pi2)$; so we are only interested in solutions where the $\xi$ and $\theta$ components remain in that domain. We set $\mathcal D\defeq \mtr\times(0,\frac\pi2)\times\mtr$.

The ODE $(*)$ admits two trivial families of solutions in $\mathcal D$, namely for any $k\in \mtz$:
\begin{gather*}
(\xi,\theta,\alpha)\,(t) = (\xi(0)+2(-1)^k\frac{\sqrt{m_1m_2}}{m_1+m_2}t,\,\arctan\left(\sqrt{\frac{m_1}{m_2}}\right), \pi k),\\
(\xi,\theta,\alpha)\,(t) = (\frac g4\ln m,\, \mathrm{arccot}\left(e^{(-1)^k 2t}\cot(\theta(0))\right), \frac\pi2+\pi k).
\end{gather*}
The first of these solutions lifts to the cone $\mtr_{>0}\cdot M_{\varphi^*}$ over the minimal hypersurface of the isoparametric folation, which is a minimal submanifold of $\mtr^{n+1}$. The second lifts to the round sphere, albeit with the singular fibres $V_1, V_2$ removed.

\subsection{Elementary properties of $(*)$ and symmetry}

We briefly note some elementary properties of solutions of $(*)$, proofs are standard and are thus omitted. 

\begin{proposition}
\begin{enumerate}[label={(\roman*)}]
\item For any $(\xi_0, \theta_0, \alpha_0)\in \mtr^3$ there is a unique solution $\gamma$ of $(*)$ with initial condition $\gamma(0)=(\xi_0, \theta_0, \alpha_0)$. This solution is smooth and has domain of definition all of $\mtr$, i.e. solutions exist for all times.
\item Suppose $(\xi_s,\theta_s,\alpha_s)$ converges in $\mtr^3$ to a point $(\xi_\infty,\theta_\infty,\alpha_\infty)$. Denote by $\gamma_s$ the solution of $(*)$ with initial condition $(\xi_s, \theta_s, \alpha_s)$ and $\gamma_\infty$ the solution of $(*)$ with initial condition $(\xi_\infty,\theta_\infty,\alpha_\infty)$. Then $\gamma_s$ converges uniformly on compacta to $\gamma_\infty$.
\item Solutions of $(*)$ with initial condition in $\mathcal D$ remain in $\mathcal D$ for all times.
\end{enumerate}
\end{proposition}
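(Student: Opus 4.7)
The plan is straightforward since all three assertions are standard consequences of smoothness of the vector field defining $(*)$, together with a mild a priori bound. First I would observe that the right-hand side of $(*)$ is a polynomial expression in $\sin\alpha$, $\cos\alpha$, $\sin(2\theta)$, $l(\theta)$ and $e^{4\xi/g}$, hence smooth as a function on $\mtr^3$. The Picard--Lindelöf theorem then gives local existence, uniqueness and smoothness of a solution for every initial condition in $\mtr^3$. Statement (ii) is the standard continuous-dependence corollary of Picard--Lindelöf on compact time intervals, and requires nothing beyond the local result once (i) is in place.

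The only piece of (i) that is not entirely automatic is global existence, and for this I would derive an a priori bound. The first equation of $(*)$ reads $\xi' = \cos\alpha\,\sin(2\theta)$, so $|\xi'(t)|\leq 1$ and hence $|\xi(t)-\xi(0)|\leq |t|$. Thus on any bounded time interval $[-T,T]$ the quantity $e^{4\xi/g}$ stays in a compact set, while $\sin\alpha,\cos\alpha, \sin(2\theta), l(\theta)$ are bounded by universal constants. Consequently the entire right-hand side of $(*)$ is uniformly bounded on $[-T,T]$, so by the standard blow-up criterion no solution can escape to infinity in finite time. Hence the maximal interval of existence is $\mtr$, completing (i).

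For (iii) I would argue by the uniqueness established in (i). Both $\xi'$ and $\theta'$ in $(*)$ carry the factor $\sin(2\theta)$, which vanishes for $\theta\in\{0,\pi/2\}$; the $\alpha$-equation reduces on these two planes to $\alpha' = 2\cos\alpha\,l(\theta) = \pm2m_1\cos\alpha$ at $\theta=0$ and $\alpha' = -2m_2\cos\alpha$ at $\theta=\pi/2$. In particular both planes $\{\theta=0\}$ and $\{\theta=\pi/2\}$ are invariant under the flow of $(*)$, and carry complete solutions for every initial $\alpha_0\in\mtr$. By uniqueness, a solution whose initial $\theta$-coordinate lies in $(0,\pi/2)$ cannot meet either plane at any later or earlier time, and $\xi,\alpha$ remain in $\mtr$ trivially, so the solution stays in $\mathcal{D}$ for all $t$. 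There is no real obstacle here; the only care required is to resist the temptation of a comparison argument for $\theta$ and to instead use uniqueness against the explicit invariant planes.
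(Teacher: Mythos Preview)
Your argument is correct and is precisely the standard one the paper has in mind; the paper in fact omits the proof entirely, stating only that ``proofs are standard and are thus omitted.'' Your a priori bound $|\xi'|\le 1$ for global existence and the invariant-plane uniqueness argument for (iii) are exactly the expected verifications (the $\pm$ in front of $2m_1\cos\alpha$ at $\theta=0$ is a slip---$l(0)=m_1>0$---but this is cosmetic and does not affect anything).
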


For our investigation we are interested in periodic solutions of $(*)$. These will be found with the help of a discrete symmetry of the ODE $(*)$.

\begin{defn}
Define $\theta^*\defeq\arctan(\sqrt{m_1/m_2})$ and let
$$S:\mtr^3\to\mtr^3, \qquad (\xi, \theta, \alpha)\mapsto (\xi, 2\theta^*-\theta, \pi-\alpha).$$
\end{defn} 
\begin{remark}
Note that $\theta^*$ is the solution in $(0,\frac\pi2)$ of $l(\theta)=0$. Additionally the map $S$ is an involution that reflects $\theta$ at $\theta^*$ while sending $\cos\alpha\to-\cos\alpha$ and $\sin\alpha\to\sin\alpha$. In the event that $m_1=m_2$ one has $\theta^*=\frac\pi4$ and $l(2\theta^*-\theta)=-l(\theta)$ as well as $\sin(2(2\theta^*-\theta))=\sin(2\theta)$. For $m_1=m_2$ one has $S(\mathcal D) = \mathcal D$.
\end{remark}

\begin{proposition}If $m_1=m_2$ then for any $x\in\mathcal D$ one has $S(\gamma_1(-t)) = \gamma_2(t)$ for all $t\in\mtr$, where $\gamma_1,\gamma_2$ are the solutions to $(*)$ with initial conditions $x$ and $S(x)$, respectively.
\end{proposition}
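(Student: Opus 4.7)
The natural strategy is to reduce to uniqueness: define $\tilde\gamma(t)\defeq S(\gamma_1(-t))$, show $\tilde\gamma$ solves $(*)$ with initial condition $S(x)$, and then invoke part (i) of the previous proposition to conclude $\tilde\gamma = \gamma_2$. The initial condition is immediate since $\tilde\gamma(0)=S(\gamma_1(0))=S(x)$, so the real content is checking that $\tilde\gamma$ satisfies the system.

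Writing $\gamma_1(t)=(\xi,\theta,\alpha)(t)$, the components of $\tilde\gamma$ are $\tilde\xi(t)=\xi(-t)$, $\tilde\theta(t)=2\theta^*-\theta(-t)$, $\tilde\alpha(t)=\pi-\alpha(-t)$, with derivatives $\tilde\xi'(t)=-\xi'(-t)$, $\tilde\theta'(t)=\theta'(-t)$, $\tilde\alpha'(t)=\alpha'(-t)$. The hypothesis $m_1=m_2$ is used in exactly two places: it forces $\theta^*=\pi/4$, hence $\sin(2\tilde\theta)=\sin(\pi-2\theta(-t))=\sin(2\theta(-t))$, and it makes $l(\theta)=m_1\cos(2\theta)$ odd under $\theta\mapsto\pi/2-\theta$, hence $l(\tilde\theta)=-l(\theta(-t))$. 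Together with the trivial identities $\cos\tilde\alpha=-\cos\alpha(-t)$ and $\sin\tilde\alpha=\sin\alpha(-t)$, these are all the substitutions needed.

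The verification is then a line-by-line match with $(*)$ evaluated on $\gamma_1$ at time $-t$. In the $\tilde\xi$-equation the sign from time reversal is absorbed by $\cos\tilde\alpha=-\cos\alpha(-t)$; in the $\tilde\theta$-equation both sides are already invariant under the substitution, since $\theta'=\sin\alpha\sin(2\theta)$ picks up no sign. The most involved line is the $\tilde\alpha$-equation
\[
\tilde\alpha'(t) = \sin\tilde\alpha\,\sin(2\tilde\theta)\bigl(e^{\tfrac{4}{g}\tilde\xi}-m\bigr) + 2\cos\tilde\alpha\, l(\tilde\theta),
\]
into which the above identities substitute to give precisely $\alpha'(-t)=\sin\alpha(-t)\sin(2\theta(-t))(e^{\tfrac{4}{g}\xi(-t)}-m)+2\cos\alpha(-t)\,l(\theta(-t))$, which is the third equation of $(*)$ for $\gamma_1$ at $-t$: the sign flip in $\cos\tilde\alpha$ is compensated by the sign flip in $l(\tilde\theta)$, which is exactly where $m_1=m_2$ enters.

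There is no substantive obstacle; the argument is a bookkeeping check of signs, with the role of the hypothesis $m_1=m_2$ being to provide the two symmetries $\sin(2\tilde\theta)=\sin(2\theta(-t))$ and $l(\tilde\theta)=-l(\theta(-t))$ that make the ODE equivariant under $S$ composed with time reversal. Once these are in hand, uniqueness closes the proof.
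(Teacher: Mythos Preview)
Your proposal is correct and is precisely the standard symmetry-plus-uniqueness argument the paper has in mind; the paper in fact omits the proof as standard, and your line-by-line verification (with the two identities $\sin(2\tilde\theta)=\sin(2\theta(-t))$ and $l(\tilde\theta)=-l(\theta(-t))$ coming from $m_1=m_2$) is exactly what is needed.
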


If $x=S(x)$ then $\gamma_1=\gamma_2$ in the above proposition and one gets $S(\gamma_1(t))=\gamma_1(-t)$. Noting that $(*)$ is further invariant under transformations of the form $\alpha\mapsto \alpha+2\pi k$ for $k\in\mtz$ then immediately gives a criterium for finding periodic solutions:

\begin{corollary}\label{cor: symm}
Let $m_1=m_2$ and $x\in\mathcal D$ with $S(x)=x$, let $\gamma$ be solution of $(*)$ with initial condition $x$. If there are $T\neq0$ and $k\in\mtz$ so that
$$S(\gamma(T))= \gamma(T)+\begin{pmatrix}0\\0\\2\pi k\end{pmatrix}$$
then the $\xi$ and $\theta$ components of $\gamma$ are periodic and $2T$ is a period.
\end{corollary}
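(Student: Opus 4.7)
The plan is to build a second solution of $(*)$ which is obtained from $\gamma$ by reflecting time about $T$ (combined with the $S$-symmetry and a $2\pi k$ shift in $\alpha$), show that it coincides with $\gamma$ at time $T$, and then invoke uniqueness to deduce a $2T$ periodicity relation.

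First, since $S(x)=x$, the preceding proposition applied with $\gamma_1=\gamma_2=\gamma$ yields the reflection identity $S(\gamma(-t))=\gamma(t)$, equivalently $S(\gamma(t))=\gamma(-t)$, for all $t\in\mtr$. I will also use the following easy fact about the affine involution $S$: for any $c\in\mtr$ one has $S(y+(0,0,c))=S(y)-(0,0,c)$, which follows at once from the formula $S(\xi,\theta,\alpha)=(\xi,2\theta^*-\theta,\pi-\alpha)$.

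Now define
\[
\beta(t)\defeq S\bigl(\gamma(2T-t)\bigr)-(0,0,2\pi k)^\top.
\]
I claim $\beta$ is itself a solution of $(*)$. Indeed, the preceding proposition says precisely that $t\mapsto S(\gamma(-t))$ is a solution of $(*)$ (namely the one with initial condition $S(x)$); since $(*)$ is autonomous, shifting $t\mapsto S(\gamma(2T-t))$ is still a solution; and since $(*)$ is invariant under $\alpha\mapsto\alpha-2\pi k$, subtracting $(0,0,2\pi k)^\top$ preserves being a solution. Evaluating at $t=T$, the hypothesis $S(\gamma(T))=\gamma(T)+(0,0,2\pi k)^\top$ gives
\[
\beta(T)=S(\gamma(T))-(0,0,2\pi k)^\top=\gamma(T).
\]

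By uniqueness of solutions to $(*)$ with given initial data, $\beta\equiv\gamma$ on $\mtr$. Writing this out and then substituting $t\mapsto t+2T$, one obtains
\[
\gamma(t+2T)=S(\gamma(-t))-(0,0,2\pi k)^\top=\gamma(t)-(0,0,2\pi k)^\top,
\]
where the second equality uses the reflection identity from the first paragraph. Because the first two components of $(0,0,2\pi k)^\top$ vanish, this yields $\xi(t+2T)=\xi(t)$ and $\theta(t+2T)=\theta(t)$, proving that the $\xi$- and $\theta$-components of $\gamma$ are periodic with period $2T$. There is no really hard step: the only point that needs care is verifying that $\beta$ is a solution of $(*)$, which just requires chaining together the three symmetries of the equation (autonomy, the $S$-reversal from the preceding proposition, and the $2\pi k$-invariance in $\alpha$).
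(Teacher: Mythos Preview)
Your proof is correct and follows exactly the route the paper indicates: the paper does not spell out a proof but says the corollary follows immediately from the preceding proposition (giving $S(\gamma(-t))=\gamma(t)$ when $S(x)=x$) together with the $2\pi$-periodicity of $(*)$ in $\alpha$, and you have carefully filled in those details via the auxiliary solution $\beta$ and uniqueness. The one minor remark is that the identity $S(y+(0,0,c))=S(y)-(0,0,c)$ you record is not actually used in your argument, so you could omit it.
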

Note that one has $S(\xi,\theta,\alpha)=(\xi,\theta,\alpha+2\pi k)$ for some $k\in\mtz$ if and only if $\theta=\theta^*$ and $\alpha=\frac\pi2 + \pi j$ for some $j\in\mtz$.

\section{Existence of periodic curves}\label{sec: shooting}

In light of Corollary \ref{cor: symm} we wish to find geodesic segments that begin and end on the line $\theta=\theta^*$, with both intersections being orthogonal. We begin with the following defintion:

\begin{defn}\label{def: type}
For $\xi_0\in\mtr$ let $(\theta_{\xi_0}(t), \xi_{\xi_0}(t), \alpha_{\xi_0}(t))$ denote the solution of $(*)$ with initial condition $\xi(0) = \xi_0, \theta(0)=\theta^*$ and $\alpha(0) = \frac\pi2$. Then:
\begin{enumerate}[label=(\roman*)]
\item $\xi_0$ is said to be of \textit{type 1} if there is a $T>0$ so that $\theta_{\xi_0}(T) = \theta^*$ and $\xi_{\xi_0}'(t) \neq 0$ for all $t\in (0,T)$.
\item $\xi_0$ is said to be of \textit{type 2} if there is a $T>0$ so that $\xi_{\xi_0}'(T) = 0$ and $\theta_{\xi_0}(t) \neq \theta^*$ for all $t\in (0,T)$.
\item $\xi_0$ is said to be of \textit{type 3} if $\xi_{\xi_0}'(t) \neq 0$ and $\theta_{\xi_0}(t) \neq \theta^*$ for all $t>0$.
\end{enumerate}
\end{defn}

Note that type 1 and type 2 are not exclusive, whereas a point is type $3$ precisely if it is not type 1 or type 2. In fact a point that is both of type 1 and and type 2 corresponds to a curve segment that orthogonally meets the $\theta = \theta^*$ line at its start and its end. If $m_1=m_2$ this leads to a solution for which the $\xi$ and $\theta$ components are periodic, which corresponds a closed embedded self-shrinker in $\mtr^{n+1}$ of topological type $S^1\times M$, here $M$ is diffeomorphic to the leaves of the isoparametric foliation. The following argument then finds a value $\xi_0^*$ that is both type 1 and type 2.
\begin{remark}
For $m_1=m_2$ one can see that $\xi_0 = \frac g4\ln m$ is the only type 3 point, as in this case type 3 points correspond to embedded mean-curvature convex self-shrinkers that are topologically a sphere. By \cite{husken-monotonicity-90} the only closed embedded mean-curvature convex self-shrinkers are round spheres, which in this setting are given by the line $\xi=\frac g4\ln m$.
\end{remark}

\begin{figure}[h]
\begin{center}
\begin{tikzpicture}
\draw[thick, blue] (-2,4.5) arc(90:0: 1.5cm and 1cm) coordinate(arc1);
\draw[thick, blue] (arc1) arc(0:-66: 2.5 cm and 1.5cm);
\draw[thick, green] (-2,3.5) arc(90:0: 1.8cm and 1.2cm) coordinate(arc2);
\draw[thick, green ] (arc2) arc(0:-90: 1.8 cm and 1.5cm);
\draw[thick, orange] (-2, 2.5) arc(90:0:2.2cm and 1.4 cm ) coordinate(arc3);
\draw[thick, orange] (arc3) arc(0:-118: 1.5cm and 1.7 cm);
\draw[thick, red] (-2, 1.5) to [curve through = { (-1,1.5) .. (2,.5) .. (2.5,.15)}] (3,-.1);
\draw[very thick, -latex] (-2, 1.2) -- (3.3,1.2);
\node[anchor = north] at (3.3,1.2) {$\theta$};
\draw[-latex, very thick] (-2,-1) -- (-2,5);
\node[anchor = east]  at (-2,5) {$\xi$};
\end{tikzpicture}

\caption{Examples of curves of different type. The blue curve is type 1 but not type 2, the green curve is both type 1 and type 2, the orange curve is type 2 but not type 1, the red curve is type 3.}
\label{fig: types}
\end{center}
\end{figure}
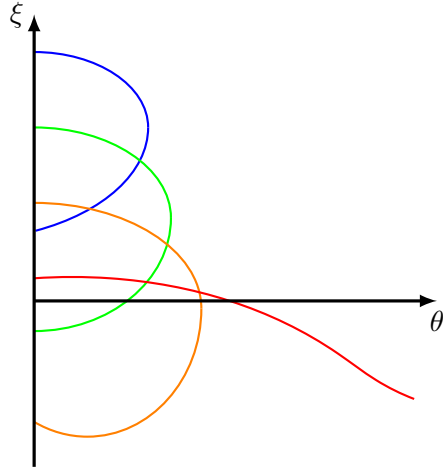

Define:
$$\xi_0^*= \inf \{r\in\mtr\mid \xi_0 \text{ is type 1 for all $\xi_0>r$ }\}.$$

\begin{proposition}\label{prop: position}We have:
\begin{enumerate}[label=(\roman*)]
\item $\xi_0^*<\infty$.
\item $\xi_0^* >\frac g4\ln m$.
\item $\xi_0^*$ is not type 3.
\end{enumerate}
\end{proposition}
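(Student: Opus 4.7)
I will treat the three assertions separately, working throughout with the ODE system $(*)$ and its graph reformulation $(**)$.

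For (i), the idea is that for $\xi_0$ large, the $\alpha$-equation in $(*)$ has an enormous driving term at $t = 0$, namely $\alpha'(0) = (e^{(4/g)\xi_0}-m)\sin(2\theta^*)$. This forces $\alpha$ to sweep past $\pi$ very quickly; once $\alpha \in (\pi,2\pi)$ one has $\sin\alpha < 0$, so $\theta$ reverses and moves back toward $\theta^*$. I would make this rigorous by passing to the graph form $(**)$: at $\theta = \theta^*$ we have $\xi''(\theta^*) = -(e^{(4/g)\xi_0}-m)$, so the graph is strongly concave. A comparison with the parabola $\xi_0 - c(\theta-\theta^*)^2$ (with $c$ proportional to $e^{(4/g)\xi_0}$) shows that the graph develops a vertical tangent (meaning $\alpha$ has crossed $\pi$) within an arbitrarily small $\theta$-interval. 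An analogous estimate on the return leg shows the trajectory reaches $\theta = \theta^*$ at a finite time. The main technical step is to check that the trajectory remains inside $\mathcal D$ throughout, which follows from the confining effect of the $e^{(4/g)\xi}$ term in $(**)$.

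For (ii), I would exploit the fact that $\xi_0 = \xi_m := \frac g4 \ln m$ corresponds to the trivial round-sphere solution of $(*)$, in which $\xi \equiv \xi_m$, $\alpha \equiv \pi/2$, and $\theta(t)$ monotonically approaches $\pi/2$ from $\theta^* = \pi/4$ as $t \to \infty$, never returning to $\theta^*$. My plan is to show that for $\xi_0 = \xi_m + \epsilon$ with $\epsilon > 0$ sufficiently small, the graph $\xi(\theta)$ solving $(**)$ with $\xi(\theta^*) = \xi_m + \epsilon$ and $\xi'(\theta^*) = 0$ extends as a graph over the entire interval $[\theta^*, \pi/2)$ without developing a vertical tangent; equivalently, the $\alpha$-component of the corresponding trajectory stays in $(0,\pi)$ for all $t>0$, so $\theta(t) > \theta^*$ for every $t > 0$ and $\xi_0$ cannot be of type 1. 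The central technical difficulty here is to actually establish this global existence of the graph; the natural strategy is to construct a pair of sub- and super-solutions of $(**)$ that confine $\xi(\theta)$ to a thin strip around $\xi_m$ on $[\theta^*, \pi/2)$, so that $\xi'$ never becomes infinite.

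For (iii), I would argue by contradiction. Suppose $\xi_0^*$ is of type 3. By the definition of $\xi_0^*$ as an infimum, there exists a sequence $\xi_0^{(n)} \downarrow \xi_0^*$ with each $\xi_0^{(n)}$ of type 1 with return time $T_n > 0$. If $\{T_n\}$ is bounded then by the continuous dependence statement from Section 2 we can pass to a convergent subsequence with limit $T_\infty$, and the limit trajectory $\gamma_{\xi_0^*}$ satisfies $\theta(T_\infty) = \theta^*$, contradicting that $\xi_0^*$ is of type 3. If instead $T_n \to \infty$, then $\theta_{\xi_0^*}(t) > \theta^*$ for every $t > 0$; in this case a finer asymptotic analysis of $\gamma_{\xi_0^*}$ using the sign structure of $(*)$ in the strip $\{\theta > \theta^*\}$ is needed to show that $\xi'_{\xi_0^*}$ must vanish at some finite time, i.e.\ that $\xi_0^*$ is of type 2. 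Either way $\xi_0^*$ is not of type 3. The principal obstacle lies in the $T_n \to \infty$ case, where one must rule out indefinite spiraling of the limit trajectory without a type 2 event.
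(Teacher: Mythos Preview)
Your plan for part (ii) contains a genuine error. You propose to show that for $\xi_0=\frac g4\ln m+\epsilon$ the graph $\xi(\theta)$ extends over all of $[\theta^*,\pi/2)$ without a vertical tangent, so that $\theta(t)$ is monotone and never returns to $\theta^*$. This is false. Integrating the identity
\[
\frac{d}{d\theta}\ln\frac{|d\xi/d\theta|}{\sqrt{1+(d\xi/d\theta)^2}}=-\frac{e^{\frac4g\xi}-m}{d\xi/d\theta}-2H(\theta)
\]
from any point where $d\xi/d\theta<0$ toward $\pi/2$, the contribution $-\!\int 2H = -m_1\ln\sin\theta - m_2\ln\cos\theta$ diverges to $+\infty$, while the left-hand side is bounded above by $0$. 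Once $\xi$ has dipped below $\frac g4\ln m$ the remaining term only makes matters worse. Hence $d\xi/d\theta\to-\infty$ at some $\theta_2<\pi/2$: the trajectory \emph{does} turn around. No sub/supersolution pair confining $\xi$ to a strip around $\frac g4\ln m$ can prevent this, because the blow-up is driven by the singularity of $H$ at $\pi/2$, not by the size of $\xi-\frac g4\ln m$. The paper's proof of (ii) takes the opposite tack: it \emph{assumes} type~1 (so the turning point $\theta_2$ exists), obtains a quantitative lower bound $\frac g4\ln m - \xi(\theta_2)\gtrsim(\pi/2-\theta_2)$ via careful integration of the identity above, and then shows that on the \emph{lower} graph the term $m-e^{\frac4g\xi}$ forces $d\xi/d\theta$ through zero before $\theta$ reaches $\theta^*$. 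That return-leg analysis is where the actual content lives, and your outline does not touch it.

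Your plans for (i) and (iii) have the right overall shape but each skips the decisive step. In (i) you correctly argue that the huge factor $e^{\frac4g\xi_0}$ makes $\theta$ turn around after an excursion of size $O(e^{-\frac4g\xi_0})$; what you do not address is why $\xi'$ cannot vanish on the way back, which is precisely the distinction between type~1 and type~2. The paper handles this by contradiction: if $\xi_0$ were type~2, then $\xi$ must descend from $\xi_0-O(1/\xi_0)$ all the way below $\frac g4\ln m$ while $\theta$ remains trapped in a strip of width $O(1/\xi_0)$; but near the minimum of $\xi$ one has $\theta'\approx-\sin(2\theta^*)$ uniformly for a fixed amount of time, which is incompatible with the strip. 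In (iii), your bounded-$T_n$ case is fine, but in the unbounded case you propose to show that the \emph{limit} trajectory $\gamma_{\xi_0^*}$ has $\xi'$ vanishing, which by the definition of type~3 is exactly what you have assumed does not happen. The paper instead shows that the \emph{approximating} type~1 trajectories, whose $\theta$-maxima are forced toward $\pi/2$, must themselves develop a zero of $\xi'$ on the return leg (again via the integral identity above), contradicting that they are type~1.
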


This proposition will be proven in Section \ref{sec: proof}. For now we make use of the following elementary lemma:

\begin{lemma}\label{lemma: extrema}
For $(\xi,\theta)\,(t)\in\mathcal D$ one has the following characterisation of extrema:
\begin{enumerate}[label=(\roman*)]
\item If $\xi'(t) = 0$, then $\sign(\xi''(t)) = \sign(\frac g4\ln m-\xi(t))$.
\item If $\theta'(t) = 0$, then $\sign(\theta''(t)) = \sign(\theta^*- \theta)$.
\end{enumerate}
\end{lemma}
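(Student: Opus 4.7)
The strategy is straightforward: at each type of critical point one of $\sin\alpha,\cos\alpha$ vanishes, and this causes enough simplification in the second derivative (computed from $(*)$) that the claim falls out. Throughout, I use that $(\xi,\theta)(t)\in\mathcal{D}$ forces $\theta\in(0,\pi/2)$ and hence $\sin(2\theta)>0$.

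For part (i), if $\xi'(t)=0$ then from $\xi'=\cos\alpha\,\sin(2\theta)$ and $\sin(2\theta)>0$, I get $\cos\alpha=0$, hence $\sin^2\alpha=1$. Differentiating the first equation of $(*)$ gives
\[
\xi''(t)=-\sin\alpha\cdot\alpha'(t)\cdot\sin(2\theta)+\cos\alpha\cdot 2\cos(2\theta)\cdot\theta'(t);
\]
the second term vanishes since $\cos\alpha=0$. Substituting the ODE for $\alpha'$ and killing the $\cos\alpha$ term there as well yields $\xi''(t)=-\sin^2\alpha\sin^2(2\theta)(e^{4\xi/g}-m)=-\sin^2(2\theta)(e^{4\xi/g}-m)$. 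So $\sign(\xi''(t))=\sign(m-e^{4\xi/g})=\sign\!\bigl(\tfrac{g}{4}\ln m-\xi(t)\bigr)$, as claimed.

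For part (ii), if $\theta'(t)=0$ then $\sin\alpha=0$ and $\cos^2\alpha=1$. Differentiating the second equation of $(*)$ gives $\theta''(t)=\cos\alpha\cdot\alpha'(t)\cdot\sin(2\theta)+\sin\alpha\cdot 2\cos(2\theta)\cdot\theta'(t)$, and again the second summand vanishes. The $\sin\alpha$ term in $\alpha'$ also dies, leaving $\alpha'(t)=2\cos\alpha\,l(\theta)$, so $\theta''(t)=2\cos^2\alpha\,l(\theta)\sin(2\theta)=2l(\theta)\sin(2\theta)$. Thus $\sign(\theta''(t))=\sign(l(\theta))$, and since $l(\theta)=m_1\cos^2\theta-m_2\sin^2\theta$ is a strictly decreasing function on $(0,\pi/2)$ with unique zero at $\theta^*=\arctan(\sqrt{m_1/m_2})$, we get $\sign(l(\theta))=\sign(\theta^*-\theta)$.

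There is no real obstacle here; the only thing to be careful about is using $(\xi,\theta)\in\mathcal{D}$ to guarantee $\sin(2\theta)>0$ so that the deductions $\cos\alpha=0$ in (i) and $\sin\alpha=0$ in (ii) are justified. This is presumably why the authors described such proofs as standard and omitted them.
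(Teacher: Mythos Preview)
Your proof is correct and follows exactly the same approach as the paper: differentiate the relevant equation of $(*)$, use the vanishing of $\cos\alpha$ (resp.\ $\sin\alpha$) to simplify $\alpha'$, and read off the sign from $e^{4\xi/g}-m$ (resp.\ $l(\theta)$). You are in fact slightly more explicit than the paper in justifying why $\cos\alpha=0$ and $\sin\alpha=0$ follow from $\xi'=0$ and $\theta'=0$ via $\sin(2\theta)>0$; note, incidentally, that the paper does include this short proof rather than omitting it.
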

\begin{proof}
$\xi'(t)=0$ if and only if $\cos(\alpha(t))=0$, so
$$\xi''(t) = - \sin\alpha \sin(2\theta) \alpha'(t) = - \sin^2\alpha \sin^2(2\theta)(e^{\frac4g\xi}-m).$$
In the same way $\theta'(t)=0$ if and only if $\sin(\alpha(t))=0$, so
$$\theta''(t) = \cos\alpha\sin(2\theta)\alpha'(t) = 2\cos^2\alpha \sin(2\theta) l(\theta).$$
Now $l(\theta)>0$ for $\theta < \theta^*$ and $l(\theta)>0$ for $\theta >\theta^*$.
\end{proof}

This now gives:

\begin{proposition}\label{prop: type12}
$\xi_0^*$ is type 1 and type 2.
\end{proposition}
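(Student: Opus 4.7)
The plan is to encode the type of $\xi_0$ by two return-time quantities. Let $T_\theta(\xi_0) \in (0, \infty]$ and $T_\xi(\xi_0) \in (0, \infty]$ be the first positive times at which $\theta_{\xi_0} = \theta^*$ and $\xi'_{\xi_0} = 0$ respectively (either being $\infty$ if no such time exists). Then $\xi_0$ is type 1 iff $T_\theta(\xi_0) < \infty$ and $T_\theta(\xi_0) \leq T_\xi(\xi_0)$; type 2 iff $T_\xi(\xi_0) < \infty$ and $T_\xi(\xi_0) \leq T_\theta(\xi_0)$; type 3 iff $T_\theta(\xi_0) = T_\xi(\xi_0) = \infty$. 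In particular $\xi_0$ is simultaneously of type 1 and type 2 iff $T_\theta(\xi_0) = T_\xi(\xi_0) < \infty$. By Proposition \ref{prop: position}(iii), $\xi_0^*$ is not type 3, so at least one of $T_\theta(\xi_0^*), T_\xi(\xi_0^*)$ is finite, and it suffices to rule out strict inequality at $\xi_0^*$.

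First I would handle the case where $\xi_0^*$ is type 1. Suppose for contradiction $T_\theta(\xi_0^*) < T_\xi(\xi_0^*)$ and set $T \defeq T_\theta(\xi_0^*)$. Transversality $\theta'_{\xi_0^*}(T) \neq 0$ is automatic: otherwise $\sin\alpha(T) = 0$, so $\alpha(T) \in \pi\mtz$, placing $(\xi(T), \theta^*, \pi k)$ on the constant-$\theta$ cone trajectory, and uniqueness would force $\theta_{\xi_0^*} \equiv \theta^*$, contradicting $\theta_{\xi_0^*}(t) > \theta^*$ on $(0,T)$. Combined with $\xi'_{\xi_0^*}(T) < 0$ (strict, from $T < T_\xi(\xi_0^*)$) and $\xi''_{\xi_0}(0) = -\sin^2(2\theta^*)(e^{4\xi_0/g} - m) < 0$ for $\xi_0$ in a neighborhood of $\xi_0^*$ (using $\xi_0^* > \frac{g}{4}\ln m$ from Proposition \ref{prop: position}(ii)), continuous dependence on initial conditions together with the implicit function theorem yield that on a two-sided neighborhood of $\xi_0^*$, the first-return time $T_\theta(\xi_0)$ is well-defined and continuous and $\xi'_{\xi_0}(t) < 0$ throughout $(0, T_\theta(\xi_0))$. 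Hence every such $\xi_0$ is type 1---including some $\xi_0 < \xi_0^*$---contradicting that $\xi_0^*$ is the infimum of type-1 initial values.

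The case where $\xi_0^*$ is type 2 is analogous but requires extra care. Assume $T_\xi(\xi_0^*) < T_\theta(\xi_0^*)$. By Lemma \ref{lemma: extrema}, $\xi''_{\xi_0^*}(T_\xi) > 0$ strictly, where the degenerate case $\xi(T_\xi) = \frac{g}{4}\ln m$ is ruled out since it would place the solution on the sphere trajectory and uniqueness would then force $\xi_0^* = \frac{g}{4}\ln m$, contradicting Proposition \ref{prop: position}(ii). The implicit function theorem then gives continuity of $T_\xi(\xi_0)$ near $\xi_0^*$. For $\xi_0 > \xi_0^*$ (which is type 1), $T_\theta(\xi_0) \leq T_\xi(\xi_0) \to T_\xi(\xi_0^*)$, so $T_\theta(\xi_0)$ stays bounded; extracting a convergent subsequence $T_\theta(\xi_0^{(k)}) \to T^* \leq T_\xi(\xi_0^*)$ and using continuous dependence gives $\theta_{\xi_0^*}(T^*) = \theta^*$, forcing $T_\theta(\xi_0^*) \leq T^* \leq T_\xi(\xi_0^*)$ and contradicting the assumption---provided $T^* > 0$.

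The main obstacle will be establishing this uniform lower bound $T^* > 0$. The approach: since $\theta'_{\xi_0} = \sin\alpha\,\sin(2\theta)$ vanishes in $\mathcal D$ only when $\sin\alpha = 0$, $\theta$ cannot reverse direction and return to $\theta^*$ before $\alpha$ moves from $\alpha(0) = \pi/2$ to a zero of $\sin\alpha$ (i.e., to $\pi$ in the forward direction). Since solutions stay in a bounded region for $\xi_0$ near $\xi_0^*$ and $t$ in a bounded interval, $|\alpha'|$ is uniformly bounded there, yielding $T_\theta(\xi_0) \geq c > 0$ uniformly in $\xi_0$ near $\xi_0^*$ and hence $T^* \geq c > 0$.
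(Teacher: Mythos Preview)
Your argument is correct. The first case (type 1 but not type 2) matches the paper's approach: both show a two-sided neighbourhood of $\xi_0^*$ consists of type-1 points, which together with the fact that all $\xi_0>\xi_0^*$ are type 1 contradicts minimality. One small remark: your phrase ``$\xi_0^*$ is the infimum of type-1 initial values'' is not the definition---$\xi_0^*$ is the infimum of $\{r:\text{every }\xi_0>r\text{ is type 1}\}$---but the contradiction is still obtained once one notes that the neighbourhood together with $(\xi_0^*,\infty)$ yields a smaller element of that set.

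In the second case (type 2 but not type 1) you take a genuinely different route. The paper argues symmetrically to the first case: using that the first extremum of $\xi$ after $t=0$ is a strict minimum (else the solution would be the sphere trajectory), it shows that for all $\xi_0$ near $\xi_0^*$ one has $\theta_{\xi_0}(t)\neq\theta^*$ on $(0,T+\epsilon]$ while $\cos\alpha_{\xi_0}(T+\epsilon)>0$, so every such $\xi_0$ is type 2 and not type 1---contradicting that points just above $\xi_0^*$ are type 1. You instead exploit directly that each $\xi_0>\xi_0^*$ is type 1: continuity of $T_\xi$ (from the strict minimum and the implicit function theorem) bounds $T_\theta(\xi_0)\le T_\xi(\xi_0)\to T_\xi(\xi_0^*)$, and passing to a limit of the return times produces a return to $\theta^*$ for $\xi_0^*$ itself at some $T^*\le T_\xi(\xi_0^*)$, contradicting the assumption. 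The paper's version is slightly more self-contained (no sequential compactness, no separate lower-bound argument for $T^*$), while yours makes the underlying mechanism---that the type-1 condition is closed from above at $\xi_0^*$---more explicit.
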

\begin{proof}
Since $\xi_0^*$ is not type 3 by Proposition \ref{prop: position} (iii) it must be at least one of type 1 or type 2. We first assume that $\xi_0^*$ is type 1 but not type 2, then we have a $T>0$ so that for all $\epsilon>0$ small enough one gets:
$$\cos(\alpha_{\xi_0^*}(t) ) \neq 0 \quad \forall t\in [\epsilon, T+\epsilon],\qquad \theta_{\xi_0^*}(T+\epsilon)< \theta^*.$$

Since solutions of the ODE $(*)$ vary continuously in the initial conditions (with respect to the topology of uniform convergence on compacta) one finds a neighbourhood $U(\epsilon)$ of $\xi_0^*$ so that for all $\xi_0\in U(\epsilon)$ one has
$$\cos(\alpha_{\xi_0}(t))\neq 0\quad \forall t\in [\epsilon, T+\epsilon],\qquad \theta_{\xi_0}(T+\epsilon)<\theta^*.$$
Additionally one has $|\frac d{dt}\xi_{\xi_0}(t) | ≤ 1$, whence if one chooses $\epsilon$ small enough there is a neighbourhood $V$ of $\xi_0^*>\frac g4\ln m$ with $\xi_{\xi_0}(t)>\frac g4\ln m$ for all $t\in [0,\epsilon]$ and $\xi_0\in V$. By Lemma \ref{lemma: extrema} $\xi(t)$ can only have maxima when $\xi(t)>\frac g4\ln m$, which implies that $\cos(\alpha_{\xi_0}(t))\neq 0$ for all $t\in(0,\epsilon]$ and $\xi_0\in V$.

The above shows that for $\xi_0\in V\cap U(\epsilon)$ one has that $\xi_0$ is type 1, contradicting the definition of $\xi_0^*$.

The assumption that $\xi_0^*$ is type 2 but not type 1 leads to a contradiction via similar argument, we carry this out:

Note first that $\xi_{\xi_0^*}''(0)<0$, so the next extremum must be a minimum (Lemma \ref{lemma: extrema} implies that whenever $\xi'(t)=0$ one has either a maximum, a minimum, or $\xi$ is the trivial solution $\xi=\frac g4 m$). This gives a $T>0$ such that for all $\epsilon>0$ small enough:
$$\theta_{\xi_0^*}(t) \neq \theta^*\quad \forall t\in[\epsilon, T+\epsilon] , \qquad \cos(\alpha_{\xi_0^*}(T+\epsilon)) >0$$
As before one gets a neighbourhood $U(\epsilon)$ of $\xi_0^*$ so that this extends to all initial conditions $\xi_0\in U(\epsilon)$, i.e. for all $\xi_0\in U(\epsilon)$:
$$\theta_{\xi_0}(t) \neq\theta^*\quad \forall t\in[\epsilon, T+\epsilon], \qquad \cos(\alpha_{\xi_0}(T+\epsilon)) >0$$
Since $\theta_{\xi_0}'(t)=1$ one again gets for $\epsilon$ small enough a neighbourhood $V$ of $\xi_0^*$ with $\theta_{\xi_0}(t)\neq\theta^*$ for all $t\in(0,\epsilon]$ and $\xi_0\in V$. This shows that all points $\xi_0\in V\cap U(\epsilon)$ are of type 2 but not type 1, again contradicting the definition of $\xi_0^*$.
\end{proof}

As discussed, this yields a periodic geodesic via Corollary \ref{cor: symm} in the case $m_1=m_2$.

\begin{lemma}
When $m_1=m_2$ one has that the periodic geodesic $t\mapsto (\xi_{\xi_0^*}, \theta_{\xi_0^*})\,(t)$ is simple.
\end{lemma}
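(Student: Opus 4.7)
The plan is to show that the projected curve, over one full period, splits at the two points where $\theta = \theta^*$ into two arcs, one lying in $\{\theta > \theta^*\}$ and the other in $\{\theta < \theta^*\}$, each being a graph over $\xi$.

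First I would identify a common time $T > 0$ at which $\theta(T) = \theta^*$, $\xi'(T) = 0$, and $\theta \neq \theta^*$, $\xi' \neq 0$ on $(0, T)$. Indeed, if $T_1, T_2$ are times as in Definition~\ref{def: type}(i), (ii), both available by Proposition~\ref{prop: type12}, then neither $T_1 < T_2$ nor $T_2 < T_1$ is possible without immediate contradiction with the respective defining conditions; so $T_1 = T_2 \defeq T$.

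Next I would establish that $\xi$ is strictly decreasing on $[0, T]$ and that $\theta > \theta^*$ on $(0, T)$. From the ODE, $\xi'(0) = \cos(\pi/2)\sin(2\theta^*) = 0$, and Lemma~\ref{lemma: extrema}(i) together with Proposition~\ref{prop: position}(ii) gives $\xi''(0) < 0$. Combined with $\xi' \neq 0$ on $(0, T)$, this forces $\xi' < 0$ throughout $(0, T)$, so $\xi$ is strictly decreasing on $[0, T]$ and in particular $\xi(T) < \xi_0^*$. Similarly, $\theta'(0) = \sin(2\theta^*) > 0$ and $\theta$ cannot cross $\theta^*$ on $(0, T)$, so $\theta > \theta^*$ there. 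This shows that the first arc $\{(\xi(t), \theta(t)) : t \in [0, T]\}$ is the graph of a function of $\xi \in [\xi(T), \xi_0^*]$, hence simple, with both endpoints on $\{\theta = \theta^*\}$.

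Finally I would use the reflection symmetry to handle the second arc. Since $m_1 = m_2$, the initial data $(\xi_0^*, \theta^*, \pi/2)$ is $S$-fixed, so the symmetry proposition immediately preceding Corollary~\ref{cor: symm} gives $S(\gamma(-t)) = \gamma(t)$ for all $t$; and Corollary~\ref{cor: symm} gives $2T$-periodicity of the projected curve. Combining these yields, for $s \in [0, T]$,
\[
\xi(2T - s) = \xi(s), \qquad \theta(2T - s) = 2\theta^* - \theta(s).
\]
Hence the second arc $\{(\xi(t), \theta(t)) : t \in [T, 2T]\}$ is the reflection of the first across $\{\theta = \theta^*\}$: it lies in $\{\theta < \theta^*\}$ on $(T, 2T)$ and is itself a graph over $\xi$. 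The two arcs meet only at $(\xi_0^*, \theta^*)$ and $(\xi(T), \theta^*)$, which are distinct since $\xi(T) < \xi_0^*$, so the closed curve is simple. The only subtlety is producing the reflection at $T$ correctly, which I would achieve by combining the $S$-symmetry at $t = 0$ with the $2T$-periodicity as above.
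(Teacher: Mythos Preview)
Your proof is correct and follows essentially the same approach as the paper: both arguments split the period into two halves and show that on $(0,T)$ one has $\xi'<0$ and $\theta>\theta^*$, while on $(T,2T)$ one has $\xi'>0$ and $\theta<\theta^*$, from which simplicity is immediate. You are simply more explicit than the paper, which compresses the argument to three lines: you spell out why the type~1 and type~2 times must coincide, and you invoke the $S$-symmetry and $2T$-periodicity directly to obtain the reflection $\xi(2T-s)=\xi(s)$, $\theta(2T-s)=2\theta^*-\theta(s)$, whereas the paper just writes ``similarly'' for the second half.
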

\begin{proof}
Let $2T>0$ denote the period of the geodesic, then $\theta(0)=\theta(T)=\theta^*$. For $t\in(0,T)$ this gives $\xi_{\xi_0^*}'(t) <0$ and $\theta_{\xi_0^*}(t)>\theta^*$, similarly if $t\in (T,2T)$ then $\xi'_{\xi_0^*}(t)>0$ and $\theta_{\xi_0^*}(t)<\theta^*$. This immediately implies that the geodesic is simple.
\end{proof}

Together with Proposition \ref{prop: periodic-to-closed} this proves the main theorem of the paper:

\begin{manualtheorem}{A}
For any isoparametric hypersurface $M$ in $S^n$, $n\geq 2$, for which the multiplicities $m_1$ and $m_2$ of the principal curvatures agree there is a closed embedded self-shrinker of topological type $S^1\times M$ in $\mtr^{n+1}$. This hypersurface is a union of homothetic copies of the leaves of the isoparametric foliation of $S^n$ associated to $M$.
\end{manualtheorem}

In the case $m_1\neq m_2$ Proposition \ref{prop: type12} remains true and yields a simple geodesic segment that starts and ends on the $\theta=\theta^*$, meeting this line orthogonally in both places. In between the two ends one has $\theta>\theta^*$ and the same arguments give another geodesic arc with the same properties, except now $\theta<\theta^*$.

It may be useful to connect the end-points of these two arcs by line segments $\theta=\theta^*$ (which are geodesics). Doing so gives a simple closed curve consisting piecewise geodesic segments and having external angle sum equal to $0$. By the Theorem of Gauß-Bonnet this curve then encloses a total Gauß curvature of $2\pi$. Such a curve is an essential ingredient in \cite{drugan-variational}, where an adapted curve shortening flow is used to generated closed geodesics.

\subsection{The case studied by Angenent}

The case $g=1$ yields an embedded self-shrinker in $\mtr^{n+1}$ of topological type $S^1\times S^{n-1}$, which is invariant under an isometric $O(n)$ action on $\mtr^{n+1}$. This is the case investigated by Angenent in \cite{angenent-92}. In this subsection we relate Angenent's construction to ours and show that they give the same self-shrinker.

Let $\omega\defeq \{ (x_0,...,x_n) \in \mtr^{n+1}\mid \sum_{i=0}^nx_i^2=1, x_0=0\}$, $e_0\defeq (1,0,...,0)\in\mtr^{n+1}$. Then the construction of \cite{angenent-92} yields a self-shrinker of the form
\begin{equation}
\{x(t)\, e_0+\tilde r(t)\,\omega\mid t\in(a,b)\},\label{eq: ang-inv}
\end{equation}
where $x:(a,b)\to\mtr, \tilde r: (a,b)\to \mtr_{>0}$ are smooth functions and $a,b\in\mtr$. Note that sets the form (\ref{eq: ang-inv}) are precisely the $\mathbf f$-invariant sets from Definition \ref{def: f-inv}, where $\mathbf f$ arises from the isoparametric foliation of $S^n$ with $V_1= e_0$. To be more precise, if $r:(a,b)\to \mtr_{>0}$, $\varphi:(a,b)\to (0,\pi)$ one has:
\begin{equation}
\mathbf f^{-1}(\{(r(t),\varphi(t))\mid t\in (a,b)\}) = \{ r(t)\cos(\varphi(t))\,e_0+r(t)\sin(\varphi(t))\,\omega\mid t\in (a,b)\}.\label{eq: f-inverse}
\end{equation}
In \cite{angenent-92} the variational condition that a set of the form (\ref{eq: ang-inv}) is a self-shrinker is reduced to $\{(x(t), r(t))\mid t\in (a,b)\}$ being a geodesic segment in
\begin{equation}
(\mtr\times\mtr_{>0}, \tilde r^{2n-2}e^{-\tau(x^2+\tilde r^2)}(dx^2+d\tilde r^2)). \label{eq: metric-ang}
\end{equation}
Here $\tau>0$ is related to the extinction time and in \cite{angenent-92} one has $\tau=\frac14$. For ease of comparison we take $\tau=1$ and then up to a constant conformal factor the transformation implicit in (\ref{eq: f-inverse}) gives an isometry to the metric (\ref{eq: metric}) on $\mtr_{>0}\times (0,\pi)$, as is easy to see (recall $g=1$). It follows that any geodesic of (\ref{eq: metric-ang}) is a reparametrisation of a geodesic in (\ref{eq: metric}). Carrying out the additional coordinate changes of subsection \ref{sec: geod} one sees that the following map sends the orbits of solutions of $(*)$ to the orbits of geodesics of (\ref{eq: metric-ang}):
$$\Psi:\mtr\times(0,\frac\pi2)\to \mtr\times\mtr_{>0},\qquad (\xi,\theta)\mapsto (e^{\frac 2g\xi}\cos(2\theta), e^{\frac 2g\xi}\sin(2\theta)).$$
In particular $x(t)=0$ if and only if $\theta=\frac\pi4=\theta^*$.

Denote with $(\tilde r_R,x_R)\,(t)$ the evolution of a geodesic in (\ref{eq: metric-ang}) with initial conditions $(\tilde r(0), x(0))=(R,0)$ and $(\tilde r'(0), x'(0))=(0,1)$. Define:
$$R_* \defeq \inf\{\tilde R>0\mid\forall R>\tilde R: \exists t_1>0\text{ so that } x_R(t_1)=0\text{ and } \tilde r_R'(t)<0\ \forall t\in (0,t_1)\}.$$
Angenent then shows that the geodesic $(\tilde r_{R_*}, x_{R_*})$ meets the line $x=0$ orthogonally after a finite time. A symmetry argument as in Corollary \ref{cor: symm} then shows that this yields a simple periodic geodesic.

In order to show Proposition \ref{prop: g=1-angenent} we start with the following lemma. It follows from elementary arguments using continuity of solutions of the relevant ODEs in initial conditions, similar to Proposition \ref{prop: type12}.

\begin{lemma}\label{lemma: ang-switch}
For any neighbourhood $U$ of $R_*$ there are $R\in U$ and $t_1(R)>0$ so that
$$x_R(t_1(R))=0,\quad \tilde{r}_R'(t_1(R))>0,\quad \text{and }x(t)>0 \text{ for all }t\in (0,t_1(R)).$$
Similarly for any neighbourhood $V$ of $\xi_0^*$ there are $\xi_0\in V$ and $T(\xi_0)>0$ so that
$$\theta_{\xi_0}(T(\xi_0))=\theta^*,\quad \xi_{\xi_0}'(T(\xi_0))>0,\quad \text{and }\theta_{\xi_0}(t)>\theta^* \text{ for all }t\in(0,T(\xi_0)).$$
\end{lemma}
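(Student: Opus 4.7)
The plan is to prove the second statement (about $\xi_0^*$) in detail and then transport it through the diffeomorphism $\Psi$ of subsection \ref{sec: shooting}, which identifies orbits of $(*)$ with orbits of the Angenent geodesic equation (\ref{eq: metric-ang}) and matches the defining conditions of $\xi_0^*$ and $R_*$; the first statement then follows at once.

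First I would extract from Proposition \ref{prop: type12} that the type 1 and type 2 witnessing times $T_1, T_2$ at $\xi_0^*$ must coincide: $T_1 < T_2$ would place the type 1 return to $\theta^*$ inside $(0, T_2)$, contradicting type 2, while $T_1 > T_2$ would place the type 2 zero of $\xi'$ inside $(0, T_1)$, contradicting type 1. Writing $T$ for the common value, I get $\theta_{\xi_0^*}(T) = \theta^*$ and $\xi_{\xi_0^*}'(T) = 0$ simultaneously. Since $\xi_0^* > \frac{g}{4}\ln m$ by Proposition \ref{prop: position}(ii), Lemma \ref{lemma: extrema}(i) makes $t = 0$ a strict local maximum of $\xi_{\xi_0^*}$; the function then decreases strictly until its first critical point $T$, which the same lemma forces to be a strict local minimum with $\xi_{\xi_0^*}(T) < \frac{g}{4}\ln m$. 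Hence for every sufficiently small $\epsilon > 0$ I have simultaneously $\xi_{\xi_0^*}'(T+\epsilon) > 0$, $\theta_{\xi_0^*}(T+\epsilon) < \theta^*$, and $\xi_{\xi_0^*}(t) < \frac{g}{4}\ln m$ throughout $[T-\epsilon, T+\epsilon]$.

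Given any neighborhood $V$ of $\xi_0^*$, continuity of solutions of $(*)$ in initial data on the compact interval $[0, T+\epsilon]$ lets me shrink $V$ so that all three of these inequalities still hold for $\xi_{\xi_0}$, every $\xi_0 \in V$. The infimum definition of $\xi_0^*$ then supplies some $\xi_0 \in V$ with $\xi_0 < \xi_0^*$ that is \emph{not} of type 1. The intermediate value theorem applied to $\theta_{\xi_0}$ (starting at $\theta^*$ with positive derivative, ending below $\theta^*$) produces a smallest $T(\xi_0) \in (0, T+\epsilon)$ with $\theta_{\xi_0}(T(\xi_0)) = \theta^*$, and minimality gives $\theta_{\xi_0} > \theta^*$ on $(0, T(\xi_0))$, delivering two of the three desired conclusions.

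The remaining, and I expect most delicate, step is pinning down the strict sign $\xi_{\xi_0}'(T(\xi_0)) > 0$. The failure of type 1 forces $\xi_{\xi_0}'$ to vanish at some first time $\tau \in (0, T(\xi_0))$, and the same monotonicity reasoning as before makes $\tau$ a strict local minimum of $\xi_{\xi_0}$. Because $\xi_{\xi_0^*}'$ vanishes transversally at $T$ (as $\xi_{\xi_0^*}''(T) > 0$), the first zero of $\xi_{\xi_0}'$ lies in $[T-\epsilon, T+\epsilon]$ once $\xi_0$ is close enough to $\xi_0^*$, so $\tau$ lies in this interval, and the third persistent inequality then pins $\xi_{\xi_0}$ below $\frac{g}{4}\ln m$ on all of $[\tau, T(\xi_0)]$. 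Consequently $\xi_{\xi_0}'$ cannot vanish again on $(\tau, T(\xi_0)]$: any such zero would be a local maximum of $\xi_{\xi_0}$ and would force $\xi_{\xi_0} \geq \frac{g}{4}\ln m$ there by Lemma \ref{lemma: extrema}(i). Thus $\xi_{\xi_0}' > 0$ on $(\tau, T(\xi_0)]$, which gives the lemma. The key ingredient making this last step actually strict is the strict inequality $\xi_{\xi_0^*}(T) < \frac{g}{4}\ln m$ obtained in the first paragraph — without it the final step would degenerate to $\xi_{\xi_0}'(T(\xi_0)) \geq 0$.
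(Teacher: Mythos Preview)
Your proof of the second statement (the one concerning $\xi_0^*$) is correct and is precisely the argument the paper has in mind when it says the lemma ``follows from elementary arguments using continuity of solutions of the relevant ODEs in initial conditions, similar to Proposition \ref{prop: type12}''. The decomposition into (i) identifying the common witnessing time $T$, (ii) establishing the strict inequality $\xi_{\xi_0^*}(T)<\frac g4\ln m$ via Lemma \ref{lemma: extrema}, (iii) perturbing by continuity, and (iv) ruling out further zeros of $\xi'$ on $[\tau,T(\xi_0)]$ is exactly right.

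Your plan for the first statement, however, has a genuine gap. You propose to transport the second statement through $\Psi$, asserting that $\Psi$ ``matches the defining conditions of $\xi_0^*$ and $R_*$''. But under $\Psi$ one has $e^{\frac2g\xi}=\sqrt{x^2+\tilde r^2}$, so the type-1 condition ``$\xi'$ does not vanish before the first return to $\theta=\theta^*$'' becomes ``$\sqrt{x^2+\tilde r^2}$ is monotone before the first return to $x=0$'', which is \emph{not} Angenent's defining condition ``$\tilde r'<0$ before the first return to $x=0$''. These are different geometric constraints on the orbit, and there is no a priori reason they single out the same infimum. Indeed, if the correspondence were as direct as you claim, Proposition \ref{prop: g=1-angenent} would be immediate and would not require the present lemma at all --- yet the paper invokes precisely this lemma to prove Proposition \ref{prop: g=1-angenent}, so your reasoning is circular.

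The intended route for the first statement is to rerun your argument in Angenent's setting: use that $R_*$ is both ``type 1'' and ``type 2'' in his sense (this is what \cite{angenent-92} establishes), together with the analogue of Lemma \ref{lemma: extrema} for the metric (\ref{eq: metric-ang}), and repeat your four steps with $x$ playing the role of $\theta-\theta^*$ and $\tilde r$ that of $\xi$.
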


\begin{manualprop}{B}
In the case $g=1$ the construction of Theorem \ref{theorem} gives Angenent's shrinking doughnut \cite{angenent-92}.
\end{manualprop}
\begin{proof}
Let $L:\mtr\to\mtr$ be the reparametrisation of a geodesic so that $\Psi((\xi,\theta)\,(L(t))) = (x,\tilde r)\,(t)$. We take $L'(t)>0$ for all $t$. Then if $x(t)=0$ one has by an elementary calculation:
\begin{equation}
\sign(\xi'(L(t)))= \sign(\tilde r'(t))\label{eq: g=1-sign}
\end{equation}
Assume first $R_*> e^{\frac 2g\xi_0^*}$. Then apply the first part of Lemma \ref{lemma: ang-switch} to $R_*$ to get initial conditions $R$ arbitrarily close to $R_*$ and times $t_1(R)$ for which
$$x_R(t_1(R))=0,\quad \tilde r'_R(t_1)>0,\ \text{ and }\ x_R(t)>0 \text{ for all }t\in (0,t_1(R)).$$
This then transforms under $\Psi^{-1}$ to values $\xi_0$ close to $\frac g2\ln(R_*)$ (which is larger than $\xi_0^*$), by (\ref{eq: g=1-sign}) one then gets $\xi_{\xi_0}'(L(t_1)) >0$, while $\theta_{\xi_0}(L(t))>\theta^*$ for all $t\in(0,t_1)$.

These points are not type 1, contradicting the definition of $\xi_0^*$. The contradiction for $R_* < e^{\frac 2g\xi_0^*}$ is similar.
\end{proof}

\section{Proof of Propositon \ref{prop: position}}\label{sec: proof}

For the proof of Proposition \ref{prop: position} each of the points (i), (ii), and (iii) is considered separately in subsections \ref{subsec: (i)}, \ref{subsec: (ii)}, and \ref{subsec: (iii)} respectively. For the proof of these statements we also use two lemmas about the general dynamics of $(*)$, which are proven in subsection \ref{subsec: cross}. 

For a rough overview of the proof of points (i) and (ii), which are the more technical parts, see the beginnings of subsections \ref{subsec: (i)} and \ref{subsec: (ii)} as well as Figures \ref{fig: position(i)} and \ref{fig: position(ii)}.

\subsection{Crossings in finite time}\label{subsec: cross}

In this subsection we prove two useful lemmas that expand on the analysis of extrema in Lemma \ref{lemma: extrema}. The lemmas state that if $\xi'(t)$ points towards the $\frac g4 \ln m$ line then we reach this line in finite time, the same holding true for $\theta$ if $\theta'(t)$ points toward $\theta^*$. The proof of Proposition \ref{prop: position} (i) uses Lemma \ref{lemma: theta-turn} below, and Proposition \ref{prop: position} (iii) and Lemma \ref{lemma: theta-turn} use Lemma \ref{lemma: r-turn}.

\begin{lemma}\label{lemma: r-turn}
If for some $t_0\in\mtr$ one has $\xi(t_0)>\frac g4 \ln m$ ($\xi(t_0)<\frac g4 \ln m$) and $\xi'(t_0)<0$ ($\xi'(t_0)>0$) then there exists a time $T\in(0,\infty)$ so that $\xi(t_0+T)=\frac g4\ln m$.
\end{lemma}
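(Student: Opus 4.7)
The plan is to argue by contradiction, handling in detail the case $\xi(t_0)>\frac g4\ln m$ and $\xi'(t_0)<0$; the opposite case is strictly analogous, with the sign of $e^{\frac 4 g\xi}-m$ flipping in concert with the reversed conclusion of Lemma \ref{lemma: extrema}(i). Set $T\defeq\inf\{t>t_0\mid \xi(t)=\frac g4\ln m\}$ and suppose toward a contradiction that $T=+\infty$.

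The first step is to show that $\xi$ is strictly decreasing on $[t_0,\infty)$. If $\xi'(t_*)=0$ at the least $t_*>t_0$ where this happens, then $t_*$ is necessarily a local minimum of $\xi$ (being approached from $\xi'<0$), so $\xi''(t_*)\geq 0$; but Lemma \ref{lemma: extrema}(i) gives $\sign(\xi''(t_*))=\sign(\frac g4\ln m-\xi(t_*))<0$, a contradiction. Consequently $\cos\alpha(t)<0$ for all $t\geq t_0$ and $\xi$ descends monotonically to some $\xi_\infty\geq\frac g4\ln m$.

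The bulk of the work lies in ruling out this convergent-but-non-terminating scenario. I plan to do this through an $\omega$-limit analysis of the trajectory in the compact set $[\xi_\infty,\xi(t_0)]\times[0,\tfrac\pi2]\times(\mtr/2\pi\mtz)$. Monotone convergence of $\xi$ places the $\omega$-limit set inside $\{\xi=\xi_\infty\}$, and flow-invariance forces any interior limit trajectory to satisfy $\xi'=\cos\alpha\sin(2\theta)\equiv 0$, hence $\cos\alpha\equiv 0$; substituting into the third line of $(*)$ then forces $\sin(2\theta)(e^{\frac 4 g\xi_\infty}-m)\equiv 0$, which in the interior $\theta\in(0,\frac\pi2)$ is possible only when $\xi_\infty=\frac g4\ln m$. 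Thus if $\xi_\infty>\frac g4\ln m$, the $\omega$-limit is confined to the boundary $\{\theta=0\}\cup\{\theta=\frac\pi2\}$, so some sequence $\theta(t_n)$ accumulates at $0$ or $\frac\pi2$.

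To extract the contradiction I will examine the $\alpha$-equation near this boundary: for $\theta\to\frac\pi2$, one has $l(\theta)\to-m_2<0$ while $\cos\alpha<0$, so $2\cos\alpha\,l(\theta)>0$, driving $\alpha$ upward until it eventually crosses $\frac{3\pi}2$ (where $\cos\alpha$ changes sign), in conflict with $\cos\alpha<0$ on $[t_0,\infty)$; the case $\theta\to 0$ is symmetric with $l(\theta)\to m_1>0$ and $\alpha$ drifting downward below $\frac\pi2$. The two points I expect to be hardest are, first, promoting the mere accumulation of $\theta(t_n)$ at the boundary into a sustained one-directional drift of $\alpha$ (since $\theta$ need not approach the boundary monotonically and may oscillate, so the boundary contribution to $\alpha'$ might be erased during excursions away), and second, dealing with the residual subcase $\xi_\infty=\frac g4\ln m$ and $\xi(t)>\frac g4\ln m$ for all $t$. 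The latter will likely require either a refined asymptotic rate analysis of $e^{\frac 4 g\xi}-m\to 0$ paired with linearisation of $(*)$ around the sphere solution, or the construction of a Lyapunov-type quantity combining $\xi-\frac g4\ln m$ with a function of $(\theta,\alpha)$ whose time integral is forced to diverge under the assumption $T=+\infty$.
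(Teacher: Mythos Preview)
Your contradiction setup and the monotonicity step via Lemma~\ref{lemma: extrema}(i) match the paper exactly. The $\omega$-limit framing is a legitimate alternative, but it leads you into an artificial case split on whether $\xi_\infty>\frac g4\ln m$ or $\xi_\infty=\frac g4\ln m$, and in fact both of the points you flag as ``hardest'' dissolve if you follow the paper's more direct line.

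For the oscillation worry, simply invoke Lemma~\ref{lemma: extrema}(ii): once $\theta<\theta^*$ every critical point of $\theta$ is a strict minimum, so if $\theta\to 0$ there can be no extrema for large $t$, hence $\sin\alpha<0$ eventually; combined with $\cos\alpha<0$ this traps $\alpha$ (mod $2\pi$) in $(\pi,\tfrac{3\pi}2)$. For the residual subcase $\xi_\infty=\frac g4\ln m$, no separate analysis is needed at all: the only fact one uses is $e^{\frac4g\xi(t)}-m>0$, which holds for \emph{every} $t$ under the standing contradiction hypothesis, not merely in the limit. With $\alpha\in(\pi,\tfrac{3\pi}2)$ and $\theta$ near $0$, both summands of
\[
\alpha'=\sin\alpha\,\sin(2\theta)\bigl(e^{\frac4g\xi}-m\bigr)+2\cos\alpha\,l(\theta)
\]
are strictly negative; monotonicity of $\alpha$ then bounds $|\cos\alpha|$ away from $0$, and since $l(\theta)\to m_1>0$ one gets $\alpha'<-c<0$, forcing $\alpha$ to exit $(\pi,\tfrac{3\pi}2)$ through $\pi$ in finite time --- contradicting $\sin\alpha<0$. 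Note that the contradiction is with the sign of $\sin\alpha$ (i.e.\ with the eventual monotonicity of $\theta$), not with $\cos\alpha<0$ as you wrote; indeed near $\alpha=\tfrac{3\pi}2$ the first summand of $\alpha'$ is negative and actually blocks the crossing you describe. No linearisation around the sphere solution and no Lyapunov construction is required.
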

\begin{proof}
If this were not true then $\xi(t)>\frac g4\ln m$ for all $t>t_0$. By Lemma \ref{lemma: extrema} we would then have that any extremum of $\xi$ is a maximum when $t>t_0$. Since $\xi'(t_0)<0$ it follows that $\xi$ has no extrema for times $>t_0$ and then $\xi(t)$ is monotonically decreasing in $t$ and bounded below by $\frac g4\ln m$ by assumption, hence it must converge.

Since $\xi(t)$ is bounded we find that $\alpha'(t)$ is bounded, whence $\xi'(t)= \cos\alpha\, \sin(2\theta)$ must converge to $0$ (since $\theta'(t)$ is also automatically bounded). So either $\lim_{t\to\infty}\alpha(t)\in \frac\pi2+\pi\mtz$ or $\lim_{t\to\infty}\theta(t)\in \{0,\frac\pi2\}$. We briefly show that the first condition implies the second and then continue working only with the second.

The condition $\alpha(t)\to\frac\pi2+k\pi$ for some $k\in\mtz$ gives $\sin\alpha\to(-1)^{k}$. For large times the dynamics of $\theta(t)$ are then given by
$$\theta'(t)=(-1)^k\sin(2\theta) + O(\cos\alpha).$$
This gives that $\theta(t)$ converges to either $0$ or $\frac\pi2$ as $t\to\infty$, depending on whether $k$ is even or odd.

Assuming now $\theta(t)\to0$ as $t\to\infty$ one gets from Lemma \ref{lemma: extrema} that $\theta(t)$ admits no extrema for $t$ large enough, so $\sin\alpha<0$ for $t$ large enough. This gives $\alpha(t) \in 2\pi\mtz +(\pi,\frac{3\pi}2)$ for $t$ large enough. 
 However for such $t$
$$\alpha'(t) = \sin\alpha \sin(2\theta)(e^{\frac4g\xi}-m) + 2\cos\alpha\, l(\theta)$$
is a sum of two strictly negative terms. $\alpha$ then decreases for large times, so there is an $\epsilon>0$ with $|\cos\alpha|>\epsilon$ for large enough $t$. In particular $\alpha'(t)<-2\epsilon l(\theta)$ for large $t$, where $l(\theta)$ converges to $m_1$. This means that in finite time $\alpha$ exits the interval $2\pi k + (\pi,\frac{3\pi}2)$ from the bottom, contradicting that $\sin\alpha<0$ for all $t$ large enough.

The case $\theta(t)\to\frac\pi2$ can be treated in the same way. This contradiction then implies the statement for $\xi(t_0)>\frac g4\ln m$ and $\xi'(t_0)<0$. The case $\xi(t_0)<\frac g4\ln m$ and $\xi'(t_0)>0$ is also completely analogous.\end{proof}

\begin{lemma}\label{lemma: theta-turn}
If for some $t_0$ one has $\theta(t_0)<\theta^*$ ($\theta(t_0)>\theta^*$) and $\theta'(t_0)>0$ ($\theta'(t_0)<0$) then there exists a time $T\in(0,\infty)$ so that $\theta(t_0+T)=\theta^*$.
\end{lemma}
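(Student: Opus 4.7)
My plan is to construct a Lyapunov functional whose monotonicity keeps $|\sin\alpha|$ uniformly bounded below; from this the lemma is immediate, since $\theta'=\sin\alpha\sin(2\theta)$ then stays bounded away from zero and $\theta$ must reach $\theta^*$ in finite time. I treat the first case ($\theta(t_0)<\theta^*$, $\theta'(t_0)>0$) in detail; the other is handled identically with signs reversed.

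Suppose for contradiction that $\theta(t)<\theta^*$ for all $t\ge t_0$. Let $t_1>t_0$ be the first zero of $\theta'$ after $t_0$, if one exists. Since $\theta'$ is continuous with $\theta'(t_0)>0$, it must approach $0$ from above, forcing $\theta''(t_1)\le 0$ --- a local maximum of $\theta$. But Lemma \ref{lemma: extrema}(ii) yields $\theta''(t_1)>0$ because $\theta(t_1)<\theta^*$, a contradiction. Hence $\theta'(t)>0$, equivalently $\sin\alpha(t)>0$, for all $t\ge t_0$.

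Next I would introduce the potential $U(\xi)\defeq \tfrac{g}{4}e^{4\xi/g}-m\xi$, which is strictly convex and coercive with minimum value $U_0$ at $\xi=\tfrac{g}{4}\ln m$, and the Lyapunov functional
\[
E(t)\defeq -\ln\sin\alpha(t)+U(\xi(t)).
\]
A direct computation using $(*)$ gives
\[
\frac{dE}{dt}=-\frac{2\cos^{2}\alpha}{\sin\alpha}\,l(\theta),
\]
which is $\le 0$: in the present case $\sin\alpha>0$ and $l(\theta)>0$ on $\{\theta<\theta^*\}$, and in the symmetric case both $\sin\alpha$ and $l$ become negative, preserving the sign. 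Therefore $E(t)\le E(t_0)$, and combined with $U(\xi)\ge U_0$ this rearranges to $\sin\alpha(t)\ge e^{U_0-E(t_0)}=:c_0>0$. Since $\sin(2\theta)\ge\delta>0$ on the compact subinterval $[\theta(t_0),\theta^*]\subset(0,\pi/2)$, we obtain $\theta'(t)\ge c_0\delta$ uniformly in $t$. Integrating forces $\theta$ to reach $\theta^*$ by time $t_0+(\theta^*-\theta(t_0))/(c_0\delta)$, the desired contradiction.

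The main challenge is identifying $E$; the key algebraic cancellation that yields $dE/dt=-2\cos^{2}\alpha\,l(\theta)/\sin\alpha$ is easiest to discover in the $(**)$-picture of subsection \ref{sec: geod}, where the functional $\tfrac12\ln(1+\xi_\theta^2)+U(\xi)$ satisfies $E_\theta=-2H(\theta)\xi_\theta^2$ with $H(\theta)=l(\theta)/\sin(2\theta)>0$ for $\theta<\theta^*$. Once $E$ is in hand the proof is essentially one line, and in particular the argument does not need to pass through the Lemma \ref{lemma: r-turn}-style case analysis on the behaviour of $\xi$.
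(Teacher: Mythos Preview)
Your proof is correct and follows a genuinely different route from the paper's. The paper argues by contradiction but then splits into two cases according to whether $e^{4\xi/g}$ remains bounded: in the bounded case it forces $\alpha\to\pi+2\pi k$ and invokes a stable-manifold analysis at the hyperbolic fixed point $(\theta^*,0,\pi+2\pi k)$ of a transformed system; in the unbounded case it uses the graph ODE $(**)$ together with a Lipschitz argument to show that each minimum of $\xi$ forces a definite increment in $\theta$, eventually pushing $\theta$ past $\theta^*$. Your Lyapunov functional $E=-\ln|\sin\alpha|+U(\xi)$ with $U(\xi)=\tfrac{g}{4}e^{4\xi/g}-m\xi$ bypasses all of this: the identity $dE/dt=-2\cos^{2}\alpha\,l(\theta)/\sin\alpha$ (which I have checked) together with the coercivity of $U$ gives a uniform lower bound $|\sin\alpha|\ge e^{U_0-E(t_0)}$ in one stroke, and the rest is immediate. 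The only cosmetic point is that in the second case you should write $-\ln(-\sin\alpha)$ (or $-\ln|\sin\alpha|$ throughout), but the derivative computation is unchanged. What your approach buys is brevity and the elimination of the case split and the stable-manifold machinery; what the paper's approach buys is that it needs no clever auxiliary function, only Lemma \ref{lemma: extrema} and standard ODE asymptotics.
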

\begin{proof}
As before assuming that $\theta(t)<\theta^*$ for all $t>t_0$ and $\theta'(t_0)>0$ implies that $\theta(t)$ has only minima as extrema, hence $\theta'(t)>0$ for all $t$ and $\theta(t)$ converges (being bounded from above by $\theta^*$).

We first assume that $e^{\frac 4g\xi(t)}$ remains bounded as $t\to\infty$, this implies that $\alpha'(t)$ remains bounded and then from convergence of $\theta$ one gets that $\theta'(t)=\sin\alpha\,\sin(2\theta)$ converges to $0$. Since $\sin(2 \theta)$ remains bounded away from $0$ one gets that $\alpha$ converges to some element of $\pi\mtz$. Since $\xi(t)$ is not allowed to go to $+\infty$ one gets that $\lim_{t\to\infty}\alpha(t)\in \pi+2\pi\mtz$.

Performing a coordinate transform $R=\ln\xi$ the system of ODEs $(*)$ becomes:
\begin{align*}
\theta'(t) &= \sin\alpha\,\sin(2\theta),\\
R'(t) &= \cos\alpha\,\sin(2\theta) R,\\
\alpha'(t) &= \sin\alpha\,\sin(2\theta)(R^{\frac4g}-m)+2\cos\alpha\,l(\theta).
\end{align*}

From $\alpha(t)\to\pi+2\pi k$ for some $k\in\mtz$ one gets $R(t)\to0$ and $\theta(t)\to\theta^*$. But the fixpoint $(\theta, R,\alpha)=(\theta^*, 0, \pi+2\pi k)$ is hyperbolic and at this point the above ODE has as linearization:

$$\frac d{dt}\begin{pmatrix}\theta \\ R \\ \alpha \end{pmatrix} \approx \begin{pmatrix}0& 0& -\sin(2\theta^*)\\ 0 & - \sin(2\theta^*) & 0 \\ -2 l'(\theta^*) & 0&\sin(2\theta^*)m \end{pmatrix}\begin{pmatrix}\theta-\theta^* \\ R \\ \alpha -(\pi+2\pi k)\end{pmatrix}.$$
The system then has a one dimensional stable manifold - this is the line
$$(\theta(t), R(t), \alpha(t)) = (\theta^*, R(0) \exp(-\sin(2\theta^*) t), \pi +2\pi k).$$
Since we are assuming $\theta(t)<\theta^*$ for all $t>t_0$ the solution cannot lie on the stable manifold, yielding a contradiction.

To complete the proof of the lemma we must show that $e^{\frac4g \xi(t)}$ cannot be unbounded under the hypothesis $\theta(t)<\theta^*$ for all $t>t_0$ and $\theta'(t_0)>0$. First recall the graph form $(**)$:
$$\frac{d^2\xi}{d\theta^2}(\theta)=-\left(1+(\frac{d\xi}{d\theta})^2\right)\left(e^{\frac 4g\xi}-m+2H(\theta)\frac{d\xi}{d\theta}\right).$$
Whence if $\theta<\theta^*$, $\xi>\frac g4\ln m$ and $\frac{d\xi}{d\theta}>0$ one has $\frac{d^2\xi}{d\theta^2}<0$, even becoming arbitrarily negative if $\xi$ becomes arbitrarily large. So if $\xi(t)$ is unbounded from above it cannot eventually be monotonic in $\theta$ (and hence in $t$ by $\theta'(t)>0$) and must admit maxima, in fact infintely many such maxima. Between two maxima there must be a minimum, which can only happen for values of $\xi(t)$ less than $\frac g4\ln m$.

However at each minimum one has $\theta'(t) = \sin(2\theta)$, which may be bounded from below since $\theta$ stays away from $\{0,\frac\pi2\}$. Since the system $(*)$ admits a Lipschitz constant on $\{(\theta,\xi,\alpha)\mid \xi < \frac g4\ln(m)+1\}$ one finds that at each minimum of $\xi$ the parameter $\theta$ increases by some positive number admitting a bound from below. This contradicts the assumption that $\theta(t)<\theta^*$ for all $t>t_0$.

The case $\theta(t_0)>\theta^*$, $\theta'(t_0)<0$ can be treated analogously.
\end{proof}

\subsection{Proof of Proposition \ref{prop: position} (i)}\label{subsec: (i)}

The proof of Proposition \ref{prop: position} (i) is divided into two parts. First we show that for $\xi_0$ large enough there is a $T_2>0$ so that the solution to $(*)$ with initial value $(\xi , \theta, \alpha)\ (t=0) = (\xi_0, \theta^*, \frac\pi2)$ has the property:
$$\theta'(T_2)=0,\quad 0<\theta(T_2)-\theta^*<\frac1{\xi_0},\quad\xi_0-\xi(T_2)<\frac1{\xi_0},\quad\text{while $\xi'(t)<0$ for all $t\in(0,T_2)$}.$$

So $\theta$ has an extremum at $T_2$, which by Lemma \ref{lemma: extrema} is a maximum and $\theta'(T_2+\epsilon)<0$ for small $\epsilon>0$. Then by Lemma \ref{lemma: theta-turn} one has that $\theta$ reaches $\theta^*$ in finite time and so $\xi_0$ cannot be of type 3. The proof then continues by contradiction, assuming that $\xi_0$ is not of type 1 means it must be of type 2. Being of type 2 means that $\xi$ must travel all the way to some value $<\frac g4 \ln(m)$ where we have an extremum of $\xi$ - all the while $\theta$ is not allowed to cross the line $\theta^*$.

The proof by contradiction is carried out in Lemma \ref{lemma: infty-xi0-finish}, here one assumes that conditions of this scenario have been set: there is some time $T_3$ at which $\xi(T_3)=\frac 4g\ln(m)$ all the while $\theta(t)>\theta^*$ and $\xi'(t)<0$ for $t\in (0,T_3]$. Using bounds for the value of $\theta(T_3)$ one is however able to show that even in this worst-case-scenario $\theta$ crosses the value $\theta^*$ before any extremum of $\xi$ is possible, contradicting the assumption that $\xi_0$ is type 2. Hence, since it cannot be type 3, it must have been type 1.

\begin{figure}
\begin{center}
\begin{tikzpicture}
\node (xi_top_up) at (0,1) {};
\node (xi_top_down) at (0,-1) {};
\node (curve_start) at (0,.5) {};
\node[draw, anchor = center, circle, fill = black, inner sep = 0pt, minimum width=3pt] (theta_extremum) at ($(curve_start)+(2,-1)$) {};
\node (xi_bot_up) at ($(xi_top_down)-(0,1)$){};
\node (xi_bot_down) at ($(xi_bot_up) - (0,4)$) {};
\node[anchor =   east] at (xi_bot_down) {\footnotesize $\theta=\theta^*$};
\node(xi_bot_mid_left) at ($(xi_bot_up) - (.5,.8)$) {};
\node[anchor =  north east] at (xi_bot_mid_left) {\footnotesize $\xi=\frac4g\ln m$};
\node (xi_bot_mid_right) at ($(xi_bot_mid_left) + (4,0)$) {};
\node (curve_cont) at ($(xi_bot_up)+(1.9,-.2)$) {};
\node (curve_cont2) at ($(curve_cont)$) {};
\draw[very thick,-latex] (xi_top_down) -- (xi_top_up) node[anchor = east] {$\xi$};
\draw[thick] (curve_start) arc(90:0: 2cm and 1cm);
\draw[dashed, thick, blue] (theta_extremum) arc(0:-8:2.5cm and 5cm);
\draw[loosely dotted, very thick] (xi_top_down) -- (xi_bot_up);
\draw [very thick] (xi_bot_up) -- (xi_bot_down) ;
\draw [very thick, -latex] (xi_bot_mid_left) -- (xi_bot_mid_right) node[anchor = south]{$\theta$};
\draw [dashed, thick, blue] (curve_cont) arc(-10:-40: 2cm and 5cm) coordinate(end);
\draw [dashed, thick, blue] (end) arc(-40:-130: 0.5cm and 1.5cm);
\draw[thick, red] (end) arc(-30:-80: 2.5cm and  2cm);
\node[draw, anchor = center, circle, fill = black, inner sep = 0pt, minimum width=3pt] (xi4) at (end) {};
\node[anchor = west] at (xi4) {\small $(\xi,\theta)\,(T_4)$};
\node[anchor = west] at (theta_extremum) {\small $(\xi,\theta)\,(T_2)$};
\end{tikzpicture}
\end{center}
\caption{A sketch of the argument for Proposition \ref{prop: position}(i). The black curve gives the evolution of $(\xi,\theta)$ up until the extremum of $\theta$. The dashed blue line describes the worst-case scenario for the evolution of $(\xi,\theta)\,(t)$ after this extremum. The red line, which crosses the line $\theta=\theta^*$ without any extrema of $\xi$, is an estimate of actual evolution starting on a certain point of the worst-case scenario.}
\label{fig: position(i)}
\end{figure}
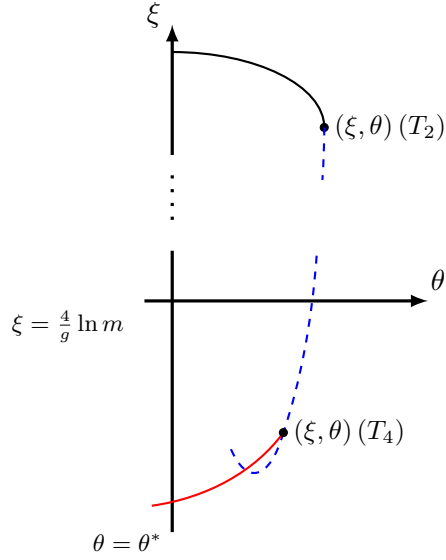
In what follows $\xi, \theta$ and $\alpha$ will denote the components of the solution of $(*)$ with initial condition $(\xi,\theta,\alpha)\,(0)=(\xi_0,\theta^*, \frac\pi2)$. The proof begins by establishing an auxilliary time $T_1$, at which $\frac{\xi'(T_1)}{\theta'(T_1)}=-1$.

\begin{lemma}\label{lemma: infty-welldef-1}
If $\xi_0$ is large enough then there is a time $T_1>0$ so that $\frac{\xi'(T_1)}{\theta'(T_1)}=-1$ while $\theta'(t)>0, \xi'(t)<0$ for all $t\in(0,T_1]$.
\end{lemma}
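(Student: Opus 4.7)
The key observation is that at $t = 0$, since $\cos(\pi/2) = 0$ and $l(\theta^*) = 0$, the $\alpha$-equation of $(*)$ collapses to
\[
\alpha'(0) = \sin(2\theta^*)\bigl(e^{\frac{4}{g}\xi_0} - m\bigr),
\]
which grows exponentially in $\xi_0$. Because $\xi'(t)/\theta'(t) = \cot\alpha(t)$ wherever $\theta'(t)\neq 0$, the condition $\xi'/\theta' = -1$ is equivalent to $\alpha = 3\pi/4 \pmod \pi$. My plan is therefore to set
\[
T_1 \defeq \inf\{\,t > 0 : \alpha(t) = 3\pi/4\,\}
\]
and show $T_1 < \infty$, with $\alpha(t) \in (\pi/2, 3\pi/4)$ on $(0, T_1)$. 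Granted this, the sign conditions $\theta'(t) > 0$, $\xi'(t) < 0$ on $(0, T_1]$ follow from $\sin\alpha > 0$ and $\cos\alpha < 0$ on that interval, together with $\sin(2\theta) > 0$ which will hold because $\theta$ stays near $\theta^*$; and $\xi'(T_1)/\theta'(T_1) = -1$ is then immediate from $\cot(3\pi/4) = -1$.

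The core step is a bootstrap showing that $\alpha$ reaches $3\pi/4$ on an exponentially short time scale. I would pick $\delta > 0$, independent of $\xi_0$, small enough that on $\{|\theta - \theta^*| \le \delta\}$ one has $\sin(2\theta) \ge c_0$ and $|l(\theta)| \le C$ for constants $c_0, C > 0$. From the first two lines of $(*)$ one has $|\xi'|, |\theta'| \le 1$ unconditionally, so whenever $t \le \delta$ and $\alpha(t) \in [\pi/2, 3\pi/4]$ one has $\xi(t) \ge \xi_0 - \delta$ and $|\theta(t) - \theta^*| \le \delta$. Substituting into the $\alpha$-equation gives
\[
\alpha'(t) \ge \tfrac{\sqrt{2}}{2}\, c_0 \bigl(e^{\frac{4}{g}(\xi_0 - \delta)} - m\bigr) - \sqrt{2}\, C,
\]
which exceeds $\tfrac{1}{2}\sin(2\theta^*)\, e^{\frac{4}{g}\xi_0}$ once $\xi_0$ is chosen large enough. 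Hence $\alpha$ is strictly increasing on the bootstrapped window and must hit $3\pi/4$ at a time $T_1 \le \pi/(\sin(2\theta^*)\, e^{\frac{4}{g}\xi_0}) \ll \delta$. This closes the bootstrap and delivers the required $T_1$.

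The principal obstacle is exactly this bootstrap: ruling out both that $\alpha$ could turn around before reaching $3\pi/4$ and that $\theta$ could drift outside the control window. Both are handled by the same mechanism, namely that the dominant term of $\alpha'$ is exponentially large in $\xi_0$, while $|\xi'|, |\theta'|$ and the correction $2\cos\alpha\, l(\theta)$ are uniformly bounded near the initial data, so $\alpha$ reaches $3\pi/4$ long before anything else can happen.
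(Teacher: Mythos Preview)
Your proof is correct and takes a genuinely different route from the paper. The paper argues indirectly: it first invokes Lemma~\ref{lemma: r-turn} to guarantee that $\xi(t)$ descends all the way to $\frac{g}{4}\ln m$ without extrema, so $\cos\alpha<0$ on $(0,t_m]$; then, since the graph $\xi(\theta)$ drops by the large amount $\xi_0-\frac{g}{4}\ln m$ over a $\theta$-interval of length at most $\frac{\pi}{2}-\theta^*$, the intermediate value theorem forces the slope $-1$ somewhere before $t_m$ (or before the first zero of $\theta'$, whichever comes first). Your argument bypasses Lemma~\ref{lemma: r-turn} entirely and instead exploits directly that $\alpha'(0)=\sin(2\theta^*)(e^{\frac{4}{g}\xi_0}-m)$ is exponentially large, running a short-time bootstrap in which $\alpha$ races from $\pi/2$ to $3\pi/4$ before $\xi$ or $\theta$ can move appreciably. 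This is more elementary (no appeal to the global dynamics lemma) and more quantitative: it immediately yields $T_1=O(e^{-\frac{4}{g}\xi_0})$ and hence $\theta(T_1)-\theta^*=O(e^{-\frac{4}{g}\xi_0})$, $\xi_0-\xi(T_1)=O(e^{-\frac{4}{g}\xi_0})$, which is precisely the content of the paper's \emph{next} lemma (Lemma~\ref{lemma: infty-xi1-theta1}). In effect your argument merges Lemmas~\ref{lemma: infty-welldef-1} and~\ref{lemma: infty-xi1-theta1} into a single step. The paper's approach, on the other hand, cleanly separates existence from estimates and reuses the already-proved Lemma~\ref{lemma: r-turn}.
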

\begin{proof}
Note that one initially has $\frac d{dt}\cos\alpha\lvert_{t=0} <0$ whence one gets $\xi'(t)<0$ for small $t$. By Lemma \ref{lemma: r-turn} $\xi(t)$ then descends to $\frac g4 \ln m$ and does not have any extrema until after this value is reached, meaning $\cos\alpha(t)<0$ for all $t\in(0,t_{m}]$ and some $t_{m}\in\mtr$ at which $\xi(t_{m})=\frac g4\ln m$. For $\xi_0$ large enough there will then be some intermediate time $T_1<t_{m}$ for which $\frac d{d\theta}\xi(\theta) = \frac{\xi'(t)}{\theta'(t)}=-1$ holds, since either $\theta'(t)=0$ for some $t\in [0,t_{m}]$ or the graph $\xi(\theta)$ must descend from $\xi_0$ at $\theta^*$ to $\frac g4 \ln m$ at some value $\theta< \frac\pi2$. In the second case the mean-value theorem implies that the graph achieves slope $-1$ at some point.
\end{proof}

\begin{lemma}\label{lemma: infty-xi1-theta1}
There are constants $c_1,c_2, c_3\in\mtr_{>0}$ so that if $\xi_0$ is large enough one has
$$c_1 e^{-\frac4g\xi_0}≤\theta(T_1)-\theta^* ≤ c_2 e^{-\frac4g\xi_0}, \qquad \xi(T_1) \geq \xi_0+ c_3 e^{-\frac4g\xi_0}$$
\end{lemma}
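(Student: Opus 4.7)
The plan is to reparametrize $(*)$ by $\alpha$ on $[0,T_1]$. From Lemma \ref{lemma: infty-welldef-1} and the initial condition $\alpha(0)=\pi/2$, together with $\alpha'(0)=\sin(2\theta^*)(e^{\frac{4}{g}\xi_0}-m)>0$ for large $\xi_0$, $\alpha$ increases monotonically; the stopping condition $\cot\alpha(T_1)=\xi'/\theta'=-1$ forces $\alpha(T_1)=3\pi/4$, a fixed $\alpha$-interval of length $\pi/4$, while the $t$-duration $T_1$ itself shrinks like $e^{-\frac{4}{g}\xi_0}$ because of the large factor $e^{\frac{4}{g}\xi_0}$ inside $\alpha'$. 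Dividing the $\xi'$- and $\theta'$-equations of $(*)$ by $\alpha'$ gives
\begin{align*}
\frac{d\theta}{d\alpha} = \frac{\sin\alpha\,\sin(2\theta)}{\sin\alpha\,\sin(2\theta)(e^{\frac{4}{g}\xi}-m)+2\cos\alpha\,l(\theta)}, \qquad \frac{d\xi}{d\alpha}=\cot\alpha\cdot\frac{d\theta}{d\alpha}.
\end{align*}

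The argument proceeds by bootstrap. I would assume as an a priori hypothesis that $|\theta(t)-\theta^*|,|\xi(t)-\xi_0|\le\delta_0$ throughout $[0,T_1]$ for some small fixed $\delta_0$. Under this, $l(\theta)=l'(\theta^*)(\theta-\theta^*)+O(\delta_0^2)=O(\delta_0)$ (since $l(\theta^*)=0$), while $\sin\alpha,\sin(2\theta)$ stay bounded away from $0$ on the relevant domain and $e^{\frac{4}{g}\xi}=e^{\frac{4}{g}\xi_0}(1+O(\delta_0))$. Hence
\begin{align*}
\frac{d\theta}{d\alpha} = e^{-\frac{4}{g}\xi_0}\bigl(1+O(\delta_0)+O(e^{-\frac{4}{g}\xi_0})\bigr), \qquad \frac{d\xi}{d\alpha}=\cot\alpha\cdot e^{-\frac{4}{g}\xi_0}\bigl(1+O(\delta_0)+O(e^{-\frac{4}{g}\xi_0})\bigr).
\end{align*}
Integrating over $\alpha\in[\pi/2,3\pi/4]$ yields
$\theta(T_1)-\theta^*=\tfrac{\pi}{4}e^{-\frac{4}{g}\xi_0}(1+O(\delta_0))$ and
$\xi(T_1)-\xi_0=\bigl[\ln\sin\alpha\bigr]_{\pi/2}^{3\pi/4}e^{-\frac{4}{g}\xi_0}(1+O(\delta_0))=-\tfrac{\ln 2}{2}e^{-\frac{4}{g}\xi_0}(1+O(\delta_0))$. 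Both deviations are $O(e^{-\frac{4}{g}\xi_0})\ll\delta_0$ for $\xi_0$ large, so the a priori hypothesis is strictly improved, and the bootstrap closes.

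The $\theta$-bound $c_1 e^{-\frac{4}{g}\xi_0}\le\theta(T_1)-\theta^*\le c_2 e^{-\frac{4}{g}\xi_0}$ follows immediately with any $c_1<\tfrac{\pi}{4}<c_2$, for $\xi_0$ large. The $\xi$-computation rigorously produces $\xi_0-\xi(T_1)=\tfrac{\ln 2}{2}e^{-\frac{4}{g}\xi_0}(1+o(1))$, so $\xi(T_1)<\xi_0$; this is also forced directly by Lemma \ref{lemma: infty-welldef-1}, whose clause $\xi'(t)<0$ on $(0,T_1]$ already implies $\xi(T_1)<\xi_0$. Consequently the literal inequality $\xi(T_1)\ge\xi_0+c_3 e^{-\frac{4}{g}\xi_0}$ with $c_3>0$ is incompatible with the previous lemma, and I read it as a sign/inequality-direction typo for the natural two-sided statement actually produced by the calculation, namely $\xi_0-\xi(T_1)\ge c_3 e^{-\frac{4}{g}\xi_0}$ (equivalently $\xi(T_1)\le\xi_0-c_3 e^{-\frac{4}{g}\xi_0}$) with any $c_3<\tfrac{\ln 2}{2}$. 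This is the form compatible with the use of the lemma in the remainder of Subsection \ref{subsec: (i)}, where the point is that $\xi_0-\xi(T_1)$ is comparable to $\theta(T_1)-\theta^*$ on the scale $e^{-\frac{4}{g}\xi_0}$.

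The only real obstacle is bookkeeping the $O(\delta_0 e^{-\frac{4}{g}\xi_0})$ corrections so that the bootstrap closes cleanly; this is a routine Gr\"onwall-type estimate once the a priori region has been fixed.
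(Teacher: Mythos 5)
Your argument is correct and rests on the same mechanism as the paper's proof: on $[0,T_1]$ the angle $\alpha$ sweeps exactly $[\frac\pi2,\frac{3\pi}4]$ while $\alpha'\approx e^{\frac4g\xi_0}$ is huge compared with $|\xi'|,|\theta'|$, so the displacement of $(\xi,\theta)$ is of order $e^{-\frac4g\xi_0}$. The paper executes this by rescaling time (so that $\xi'=\cos\alpha$, $\theta'=\sin\alpha$) and bounding the elapsed time $T_1$ from above, using the a priori bound $\xi_0-\xi(T_1)\le\frac\pi2-\theta^*$, and from below, using $|H(\theta(T_1))|=O(e^{-\frac4g\xi_0})$, before converting to bounds on $\theta(T_1)-\theta^*$ and $\xi_0-\xi(T_1)$; you instead reparametrise by $\alpha$ and close a bootstrap, which yields the sharper asymptotics $\theta(T_1)-\theta^*\sim\frac\pi4 e^{-\frac4g\xi_0}$ and $\xi_0-\xi(T_1)\sim\frac{\ln2}2 e^{-\frac4g\xi_0}$. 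When writing this up you should make explicit that the validity of the reparametrisation is itself part of the bootstrap: $\alpha'>0$ on $[0,T_1]$ uses $\theta\ge\theta^*$ (so $2\cos\alpha\,l(\theta)\ge0$) together with $\xi\ge\xi_0-\delta_0>\frac g4\ln m$, and $T_1$ should be taken as the first time with slope $-1$ so that $\alpha([0,T_1])=[\frac\pi2,\frac{3\pi}4]$ (the paper makes the same implicit choice when it asserts $\cos\alpha\in[-\frac1{\sqrt2},0]$ on $[0,T_1]$).

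One correction to your commentary: your resolution of the typo goes the wrong way. The paper's own proof establishes $\xi_0-\xi(T_1)\le c_3 e^{-\frac4g\xi_0}$, so the intended statement is $\xi(T_1)\ge\xi_0-c_3e^{-\frac4g\xi_0}$ (the ``$+$'' should be a ``$-$'', with the inequality direction as printed), i.e. the content is that $\xi$ barely moves before $T_1$; this is the direction consistent with the later argument, where what matters is that $\xi$ stays close to $\xi_0$ (compare $\xi(T_2)>\xi_0-\frac1{\xi_0}$ in Lemma \ref{lemma: infty-step2} and its use in Lemma \ref{lemma: infty-xi0-finish}), not that the drop is bounded from below. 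Since your two-sided asymptotic for $\xi_0-\xi(T_1)$ delivers the intended bound as well, nothing in your proof is lost, but the reading you propose ($\xi(T_1)\le\xi_0-c_3e^{-\frac4g\xi_0}$) is not the statement the paper uses.
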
\begin{proof}
Before beginning with the proper analysis one notes that by well definedness one has $\theta(T_1) \leq \frac\pi2$, whence by the mean value theorem $\xi_0-\xi(T_1) ≤ \frac\pi2 - \theta^*$, some finite value bounded from above.

For the proof of this lemma it is more convenient to work with the following system of ODEs:
\begin{align*}
\xi'(t) &= \cos\alpha,\\ 
\theta'(t) &= \sin\alpha,\\
\alpha'(t) &= \sin\alpha\, (e^{\frac 4g\xi}-m) +2\cos\alpha\, H(\theta).
\end{align*}
which one can get from $(*)$ by rescaling time by the law $\dfrac{dt\subs{new}}{dt\subs{old}} = \sin(2\theta)$. To keep the number of superfluous parameters at a minimum we still use $T_1$ to denote the time at which $\frac{\xi'(T_1)}{\theta'(T_1)}=-1$ in this new ODE. One then gets for all $t\in[0,T_1]$:
$$\cos\alpha(t) \in [-\frac1{\sqrt2},0], \qquad \sin\alpha(t) \in [\frac1{\sqrt2},1].$$
Which implies:
$$\xi_0-\xi(T_1) \in [0,\frac1{\sqrt 2}T_1],\qquad \theta(T_1)-\theta^*\in [\frac1{\sqrt2}T_1,T_1].$$
We then procceed by bounding $T_1$ from above and below. Noting that for $t\in[0,T_1]$:
$$\frac1{\sqrt2}(e^{\frac4g\xi(T_1)} -m) ≤\alpha'(t) ≤ e^{\frac4g\xi_0} -\sqrt2 H(\theta(T_1)).$$
Integrating the left inequality from $0$ to $T_1$ yields:
$$\frac{T_1}{\sqrt2} (e^{\frac4g\xi(T_1)} -m) ≤\alpha(T_1)-\alpha(0)=\frac\pi4.$$
Recalling that $\xi_0-\xi(T_1) \leq \frac\pi2-\theta^*$ one gets $T_1≤ d_1 e^{-\frac4g\xi_0}$ for an appropriate constant $d_1$. This implies $\theta(T_1)-\theta^* ≤ c_2 e^{-\frac4g\xi_0}$ and $\xi_0-\xi(T_1)\leq c_3 e^{-\frac4g\xi_0}$ for appropriate $c_2, c_3$.

Combining $\theta(T_1)-\theta^*\leq c_2 e^{-\frac4g \xi_0}$ with $H(\theta^*)=0$ gives for $\xi_0$ large enough that $-\sqrt 2 H(\theta(T_1))  ≤ d_2  e^{-\frac4g\xi_0}$
for some other constant $d_2$. Integrating the other inequality for $\alpha'(t)$ from $0$ to $T_1$ then gives:
$$\frac\pi4\leq T_1 (e^{\frac4g\xi_0} + d_2 e^{-\frac4g\xi_0})\quad \implies\quad T_1 ≥ d_3 e^{-\frac4g\xi_0}$$
for another constant $d_3$, provided $\xi_0$ is large enough. This yields the final bound of the lemma, namely: $c_1e^{-\frac4g\xi_0}≤\theta(T_1)-\theta^*$.
\end{proof}
\begin{lemma}\label{lemma: infty-step2}
For $\xi_0$ large enough there is a time $T_2>T_1$ so that $\theta'(T_2)=0$, $\xi(T_2)>\xi_0-\frac1{\xi_0}$ while $\theta(t)\in\theta^*+(0,\frac1{\xi_0})$ and $\xi'(t)<0$ for all $t\in(0,T_2]$.
\end{lemma}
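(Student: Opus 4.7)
The plan is to work in the rescaled time coordinate used in the proof of Lemma \ref{lemma: infty-xi1-theta1}, in which the ODE becomes $\xi'=\cos\alpha$, $\theta'=\sin\alpha$, $\alpha'=\sin\alpha\,(e^{\frac{4}{g}\xi}-m)+2\cos\alpha\,H(\theta)$, and by Lemma \ref{lemma: infty-xi1-theta1} one has $\alpha(T_1)=3\pi/4$, $\theta(T_1)-\theta^*\in[c_1 e^{-\frac{4}{g}\xi_0},c_2 e^{-\frac{4}{g}\xi_0}]$ and $\xi_0-\xi(T_1)\leq c_3 e^{-\frac{4}{g}\xi_0}$; orders of magnitude transfer back to $(*)$ since $\theta$ stays close to $\theta^*$. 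I would define $T_2$ to be the first $t>T_1$ with $\alpha(t)=\pi$, equivalently with $\theta'(t)=0$. A short continuity bootstrap based on $|\xi'|,|\theta'|\leq 1$ shows that, provided $T_2-T_1$ is not too large, the state remains in the region $\xi\geq\xi_0-1$ and $\theta\in(\theta^*,\theta^*+2c_2 e^{-\frac{4}{g}\xi_0}]$ on $[T_1,T_2]$. In this region both summands in $\alpha'$ are strictly positive (the second because $\cos\alpha<0$ and $H(\theta)<0$, using that $H$ is strictly decreasing at $\theta^*$), so $\alpha$ strictly increases on $[T_1,T_2]$; consequently $\theta$ strictly increases and $\xi$ strictly decreases there.

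The main step is the bound $T_2-T_1=O(\xi_0 e^{-\frac{4}{g}\xi_0})$. Writing $u=\pi-\alpha$, the ODE becomes $u'=-\sin u\,(e^{\frac{4}{g}\xi}-m)+2\cos u\,H(\theta)$ with $u(T_1)=\pi/4$ and both summands non-positive. Set $K_0=\tfrac{1}{2}e^{\frac{4}{g}\xi_0}$ as a lower bound for $e^{\frac{4}{g}\xi}-m$ once $\xi\geq\xi_0-1$, and $\delta=\tfrac{1}{2}|H'(\theta^*)|c_1 e^{-\frac{4}{g}\xi_0}$ as a lower bound for $-H(\theta)$ after Taylor expansion at $\theta^*$ using $\theta\geq\theta(T_1)$. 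Using $\sin u\geq u/2$ and $\cos u\geq 1/\sqrt 2$ on $[0,\pi/4]$, one obtains the linear differential inequality $u'\leq -(K_0/2)\,u-\sqrt{2}\,\delta$. An integrating-factor calculation gives $u(t)+2\sqrt{2}\,\delta/K_0\leq\bigl(u(T_1)+2\sqrt{2}\,\delta/K_0\bigr)e^{-K_0(t-T_1)/2}$, and setting $u=0$ yields $T_2-T_1\leq(2/K_0)\log\bigl(1+u(T_1)K_0/(2\sqrt{2}\,\delta)\bigr)$. Since $u(T_1)K_0/\delta$ is of order $e^{\frac{8}{g}\xi_0}$, the logarithm is of order $\xi_0$, and hence $T_2-T_1=O(\xi_0 e^{-\frac{4}{g}\xi_0})$, which in particular justifies the bootstrap above for $\xi_0$ large.

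With the time bound in hand, $|\xi'|,|\theta'|\leq 1$ gives $\xi_0-\xi(T_2)\leq c_3 e^{-\frac{4}{g}\xi_0}+O(\xi_0 e^{-\frac{4}{g}\xi_0})<1/\xi_0$ and $\theta(T_2)-\theta^*\leq c_2 e^{-\frac{4}{g}\xi_0}+O(\xi_0 e^{-\frac{4}{g}\xi_0})<1/\xi_0$ once $\xi_0$ is large. The remaining sign conditions are now immediate: on $(0,T_1]$ one has $\alpha\in(\pi/2,3\pi/4]$ by Lemma \ref{lemma: infty-welldef-1}, and on $[T_1,T_2]$ one has $\alpha\in(3\pi/4,\pi)$ by the strict monotonicity established above, so $\cos\alpha<0$ and hence $\xi'<0$ throughout $(0,T_2]$; likewise $\sin\alpha>0$ on $(0,T_2)$ gives $\theta$ strictly increasing from $\theta^*$, placing $\theta(t)$ in $\theta^*+(0,1/\xi_0)$. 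The hard part is extracting the right order for $T_2-T_1$: the forcing term $2\cos u\,H(\theta)$ is only of magnitude $e^{-\frac{4}{g}\xi_0}$ and the damping $-\sin u\cdot K_0$ saturates as $u\to 0$, so neither mechanism alone suffices; it is only their combined use in a single linear inequality that turns what would otherwise be an exponential-in-$\xi_0$ time bound into the necessary $O(\xi_0 e^{-\frac{4}{g}\xi_0})$.
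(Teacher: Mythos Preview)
Your argument is correct and reaches the same conclusion as the paper, but the mechanism is organised differently. The paper works directly in the system $(*)$ on the fixed time window $[0,1/\xi_0]$ and uses a two-phase argument: first the dominant term $\sin\alpha\,\sin(2\theta)(e^{\frac4g\xi}-m)$ alone gives $\alpha'\geq A\,e^{\frac4g(\xi_0-1/\xi_0)}(\pi-\alpha)$, which drives $\pi-\alpha$ below $e^{-\xi_0^2}$ by time $\tfrac{1}{2\xi_0}$; then on the remaining window the $l(\theta)$ term alone yields a constant lower bound $\alpha'\geq B\,e^{-\frac4g\xi_0}$, which is enough to cross $\pi$. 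You instead pass to the rescaled system of Lemma~\ref{lemma: infty-xi1-theta1}, set $u=\pi-\alpha$, and combine both mechanisms into the single linear inequality $u'\leq-(K_0/2)u-\sqrt{2}\,\delta$; this gives the sharper bound $T_2-T_1=O(\xi_0 e^{-\frac4g\xi_0})$ in one stroke. Your closing remark about why neither term alone suffices is exactly the reason the paper splits into two phases, so the two proofs are really the same idea packaged differently; your packaging is arguably cleaner and yields a quantitatively better time estimate than the paper's $1/\xi_0$.

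One minor imprecision: in your bootstrap you claim the trajectory stays in $\theta\in(\theta^*,\theta^*+2c_2 e^{-\frac4g\xi_0}]$, but since $|\theta'|\leq 1$ and $T_2-T_1=O(\xi_0 e^{-\frac4g\xi_0})$ this window is too narrow to close. Fortunately your differential inequality only uses the \emph{lower} bound $\theta\geq\theta(T_1)$ (for the $\delta$ estimate, via monotonicity of $H$) together with $\xi\geq\xi_0-1$, so simply drop the upper bound on $\theta$ from the bootstrap hypothesis and the argument goes through unchanged.
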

\begin{proof}
For all $t\in(0,\frac1{\xi_0})$ one has from $|\xi'(t)|\leq1$ that $\xi(t)>\xi_0-\frac1{\xi_0}$, so for $\xi_0$ large enough $\xi'(t)<0$ for such $t$. Further as long as $\theta>\theta^*$ and $\alpha\in [\frac{3\pi}4,\pi)$ one has for such $t$ that
$$\alpha'(t)= \sin\alpha\,\sin(2\theta)(e^{\frac 4g\xi}-m)+2\cos\alpha\, l(\theta)$$
is a sum of two positive terms and so $\alpha$ is increasing. Recalling that by definition $\alpha(T_1)=\frac{3\pi}4$ and assuming that $\alpha(t)<\pi$ (i.e. $\theta'(t)>0$) for all $t\in(T_1,\frac1{\xi_0})$ yields:
$$\alpha'(t) > A e^{\frac 4g(\xi_0-\frac1{\xi_0})} (\pi -\alpha).$$
Here $A>0$ is some constant. For this estimate we used that $\theta(t)$ is bounded away from $\{0,\frac\pi2\}$ for $t\in(0,\frac1{\xi_0})$, which follows from $|\theta'(t)|\leq1$. From the intermediate value theorem one then gets a $\widetilde t\in(T_1,\frac1{2\xi_0})$ so that
$$\alpha(\widetilde t) = \pi-A \exp\left(-e^{\frac 4g(\xi_0-\frac1{\xi_0})} \frac1{2\xi_0}\right).$$
For $\xi_0$ large enough one then finds $\alpha(\widetilde t) >\pi-e^{-\xi_0^2}$. Now $\theta(T_1)>\theta^*+c_1 e^{-\frac4g\xi_0}$ from Lemma \ref{lemma: infty-xi1-theta1}, so $l(\theta(T_1))\leq c_1l'(\theta^*) e^{-\frac4g \xi_0}$ for $\xi_0$ large enough. By assumption $\theta(\widetilde t)>\theta(T_1)$, so another estimate yields for $t\in(\widetilde t,\frac1{\xi_0})$:
$$\alpha'(t) > B\, e^{-\frac4g\xi_0}$$
where $B>0$ is some constant incorporating the $2\cos(\alpha)$ term (which is close to $-2$) and $c_1l'(\theta^*)$ (which is bounded away from $0$). This then yields
$$\alpha(\widetilde t+\frac1{2\xi_0}) >\pi - e^{-\xi_0^2}+B\frac{e^{-\frac 4g\xi_0}}{2\xi_0}$$
which is larger than $\pi$, contradicting our assumption that $\alpha(t)<\pi$ for all $t\in(T_1,\frac1{\xi_0})$. The lemma then follows.
\end{proof}

We will now prove that $\xi_0$ is of type 1 by contradiction.
\begin{lemma}\label{lemma: infty-xi0-finish}
If $\xi_0$ is large enough then it is of type 1.
\end{lemma}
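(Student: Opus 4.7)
The argument is by contradiction. Since Proposition \ref{prop: position}(iii) rules out type 3, if $\xi_0$ is not of type 1 then it must be of type 2 only, giving a first time $T_4>0$ with $\xi'(T_4)=0$ and $\theta(t)\neq\theta^*$ on $(0,T_4]$. My plan is to show that for $\xi_0$ sufficiently large $\theta$ must in fact drop to $\theta^*$ strictly before $T_4$, contradicting the hypothesis.

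First I would organise $(0,T_4]$ using the preceding lemmas. Lemma \ref{lemma: infty-step2} supplies $T_2\in(0,T_4)$ with $\alpha(T_2)=\pi$, $\theta(T_2)-\theta^*<1/\xi_0$, and $\xi'<0$ on $(0,T_2]$. Since $\alpha'(T_2)=-2\,l(\theta(T_2))>0$, $\alpha$ crosses $\pi$ upward at $T_2$. Using Lemma \ref{lemma: extrema}(ii), $\alpha$ cannot return to $\pi$ on $(T_2,T_4)$ (else $\theta$ would have a local minimum at a value $>\theta^*$, which that lemma forbids), and $\alpha$ cannot reach $3\pi/2$ before $T_4$ by minimality of $T_4$. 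Hence $\alpha\in(\pi,3\pi/2)$ throughout $(T_2,T_4)$ with $\alpha(T_4)=3\pi/2$, so $\theta$ is strictly decreasing on $(T_2,T_4]$, in particular $\theta(t)-\theta^*\in(0,1/\xi_0)$ for every such $t$. Lemma \ref{lemma: extrema}(i) forces $\xi(T_4)<\frac g4\ln m$, so by continuity there is $T_3\in(T_2,T_4)$ with $\xi(T_3)=\frac g4\ln m$.

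Next, on $(T_3,T_4)$ all of $\sin\alpha$, $\cos\alpha$, $e^{\frac 4g\xi}-m$, and $l(\theta)$ are $\le 0$ (with appropriate strict inequalities), so both summands of $\alpha'$ are non-negative, and $\xi'\neq 0$ permits writing $\theta$ as a graph over $\xi$ satisfying the companion of $(**)$,
\[
\frac{d^2\theta}{d\xi^2}=\bigl(1+(d\theta/d\xi)^2\bigr)\Bigl((d\theta/d\xi)(e^{\frac 4g\xi}-m)+2H(\theta)\Bigr),
\]
whose right-hand side is strictly negative on $(T_3,T_4)$ (the first summand in the bracket is a product of a positive and a non-positive factor; the second is strictly negative). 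Thus $\theta(\xi)$ is strictly concave. The plan is to couple this concavity with a quantitative lower bound on $\tfrac{d\theta}{d\xi}\big|_{T_3}=\tan\alpha(T_3)$ and on the $\xi$-length of the interval $(T_3,T_4)$ to produce
\[
\theta(T_3)-\theta(T_4)\;>\;\theta(T_3)-\theta^*,
\]
which is the desired contradiction.

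The delicate point, and the main obstacle I anticipate, is matching orders of magnitude: the available drop $\theta(T_3)-\theta^*$ is already of order $1/\xi_0$, and the curvature driver $H(\theta)$ is likewise of order $1/\xi_0$ in this regime, so a soft estimate risks failing. The saving grace should be the exponentially large factor $e^{\frac 4g\xi_0}$ accumulated in $\alpha'$ while $\xi$ was close to $\xi_0$; integrating $\alpha'$ carefully on $(T_2,T_3)$ should give a quantitative lower bound on $\alpha(T_3)-\pi$ (and hence on $|\tan\alpha(T_3)|$) that comfortably dominates $1/\xi_0$ for $\xi_0$ large, yielding enough slope in the concave graph $\theta(\xi)$ to push $\theta$ below $\theta^*$ before $\alpha$ reaches $3\pi/2$.
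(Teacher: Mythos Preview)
Your overall architecture matches the paper's: reduce to the type-2 scenario, trap $\theta$ in the corridor $(\theta^*,\theta^*+1/\xi_0)$ all the way to the first extremum of $\xi$, and derive a contradiction from the fact that $\theta$ must move too far. Two problems, one minor and one substantive.

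Minor: Proposition~\ref{prop: position}(iii) concerns the specific value $\xi_0^*$, not an arbitrary large $\xi_0$, and in any case presupposes part (i), which this lemma is proving. The correct way to exclude type~3 is exactly what you already set up: Lemma~\ref{lemma: infty-step2} produces a maximum of $\theta$ at $T_2$, and Lemma~\ref{lemma: theta-turn} then forces $\theta$ to reach $\theta^*$ in finite time.

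Substantive: your proposed ``saving grace'' has the wrong sign. On $(T_2,T_3)$ you have $\alpha\in(\pi,\tfrac{3\pi}2)$, so $\sin\alpha<0$, while $e^{\frac4g\xi}-m>0$; hence the summand $\sin\alpha\,\sin(2\theta)(e^{\frac4g\xi}-m)$ in $\alpha'$ is \emph{negative} and exponentially large in magnitude. The exponential therefore pins $\alpha$ near $\pi$ rather than driving it toward $\tfrac{3\pi}2$, so it yields no useful lower bound on $\alpha(T_3)-\pi$ or on $|\tan\alpha(T_3)|$. Your concavity argument for the graph $\theta(\xi)$ on $(T_3,T_4)$ then lacks the quantitative slope it needs; separately, nothing controls the $\xi$-length $\tfrac g4\ln m-\xi(T_4)$ from below, so concavity alone cannot force $\theta$ to drop by more than $1/\xi_0$.

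The paper closes the argument far more directly, working in the \emph{time} variable near your $T_4$. There $\sin\alpha(T_4)=-1$, so $\theta'(T_4)=-\sin(2\theta(T_4))$ is bounded away from $0$ by a constant depending only on $m_1,m_2$. Using $|\xi'|\le1$ and $T_4-T_2\to\infty$ one has $\xi\le\tfrac g4\ln m+1$ on $[T_4-1,T_4]$, whence $\alpha'$ and thus $\theta''$ are bounded by constants independent of $\xi_0$. Therefore $\theta'<-\tfrac12$ on a fixed-length interval $[T_4-b,T_4]$, which forces $\theta$ to drop by at least $b/2>1/\xi_0$ for $\xi_0$ large---contradicting $\theta\in(\theta^*,\theta^*+1/\xi_0)$.
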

\begin{proof}
If $\xi_0$ is large enough by Lemma \ref{lemma: infty-step2} there is a $T_2>0$ for which $\theta'(T_2)=0$, which corresponds to a local maximum of $\theta$. This means $\theta$ reaches $\theta^*$ in finite time by Lemma \ref{lemma: theta-turn} and so $\xi_0$ is not type 3. We now assume it is not type 1, so it must be type 2.

Hence $\xi(t)$ has an extremum before $\theta(t)$ reaches $\theta^*$. We let $T_3>T_2$ denote the time of this extremum and note first that $\theta(t)\in(\theta^*,\theta^*+\frac1{\xi_0})$ for all $t\in[T_2,T_3)$ (since $\theta(T_2)\leq\theta^*+\frac1{\xi_0}$ is a maximum and any further extremum of $\theta$ must take place behind the line $\theta=\theta^*$).

Next one sees that $\xi(T_3)<\frac g4\ln m$, since the extremum must be a minimum. With $\xi(T_2)\geq\xi_0+\frac1{\xi_0}$ and $|\xi'(t)|\leq1$ one gets that $T_3-T_2$ will become arbitrarily large as $\xi_0$ grows. And so, for $\xi_0$ large enough, one sees that $\xi(t)\leq\frac g4\ln m+1$, $|\theta(t)-\theta^*|\leq 1$ for all $t\in[T_3-1,T_3]$. Note that $\alpha'(t)$ admits a bound if $\xi,\theta$ are in this region.

Now $\xi'(T_3) = 0$ implies $\cos\alpha(T_3)=0$ and so $\sin\alpha(T_3)=-1$, which in turn gives $\theta'(T_3)=-\sin(2\theta)\leq-1+O(\frac1{\xi_0^2})$. From $(*)$ one sees directly that $|\theta''(t)|\leq |\alpha'(t)|+1$, and so the bound on $\alpha'(t)$ give a finite $b>0$ (independent of $\xi_0$) so that $\theta'(t)<-\frac12$ for all $t\in [T_3-b,T_3]$.

But if $\xi_0$ is large enough one notes that $\theta(t)\in [\theta^*,\theta^*+\frac1{\xi_0})$ and $\theta'(t)\leq-\frac12$ cannot both simultaneously hold for all $t\in[T_3-b,T_3]$. This contradiction shows that $\xi_0$ cannot be type 2, hence (since it is also not type 3) it is type 2.
\end{proof}

\subsection{Proof of Proposition \ref{prop: position} (ii)}\label{subsec: (ii)}

We consider the solution curve with initial condition $\xi_0=\frac g4\ln m +\epsilon$ and show that this is not of type 1 for $\epsilon$ sufficiently small. To do this we assume that it is of type $1$ - only to later arrive at a contradiction. If it were of type 1 then there is a $T>0$ with $\theta(T) = \theta^*$ and $\xi'(t)<0$ for all $t\in (0,T)$. Since $\theta$ can only have maxima when $\theta>\theta^*$, we find that the trajectory $\{(\theta,\xi)\,(t)\mid t\in[0,T]\}$ must be the union of two graphs of $\xi$ over $\theta$. The maximum of $\theta$ occurs at the point denoted by $(\xi_2,\theta_2)$ in Figure \ref{fig: position(ii)}.

In the upper graph one has that the slope $\frac{d\xi}{d\theta}$ starts at $0$ and must go to $-\infty$ (which occurs when $\theta'(t)=0$). Along the way $\xi'$ has been negative and one verifies that $\xi_2$ has decreased to a value far enough below $\frac g4\ln m$ (c.f. Lemma \ref{lemma: m-xi2}). When we then switch to the lower graph the $e^{\frac4g\xi}-m$ term in the ODE for $\frac{d^2\xi}{d\theta^2}(\theta)$ will be large enough to push $\frac{d\xi}{d\theta}$ over the value $0$ before $\theta$ reaches $\theta^*$, contradicting the assumption that $\xi_0$ was type 1.

Note that this does not prove that $\frac g4\ln m+\epsilon$ is of type 2, because the proof by contradiction assumes that $\theta(t)$ has a maximum.

In what follows $\xi, \theta$ and $\alpha$ will denote the components of the solution of $(*)$ with initial condition $(\xi,\theta,\alpha)\,(0)=(\frac4g \ln m+\epsilon,\theta^*, \frac\pi2)$. The proof begins by investigating an auxilliary value $\theta_1$, which is defined to be the least (and for small $\epsilon$ only) value of $\theta$ for which one has $\frac{d\xi}{d\theta}(\theta_1)=-1$ in the upper graph.

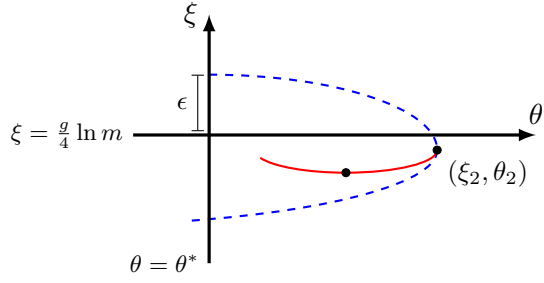
\begin{figure}\begin{center}
\begin{tikzpicture}
\draw[blue, thick, dashed] (0,3) arc (90:0:3 cm and 1cm);
\draw[blue,thick, dashed] (3,2) arc (0:-70:5 cm and 1 cm);
\draw[|-|] (-0.15, 2.25) -- (-0.15, 3) node [midway, anchor = east] {\small$\epsilon$};
\draw[-latex, very thick] (0,.5) -- (0,3.8) node[anchor = east] {$\xi$};
\node[anchor = east] at (-1,2.2) {\footnotesize $\xi=\frac g4\ln m$};
\node[anchor = east] at (0,0.5) {\footnotesize $\theta=\theta^*$};
\draw[-latex, very thick] (-1, 2.2) -- (4.3, 2.2) node[anchor = south] {$\theta$};
\draw[red,thick] (3,2) arc(0:-160: 1.2 cm and .3cm);
\node[draw, anchor = center, circle, fill = black, inner sep = 0pt, minimum width=3pt] (P) at (3,2) {};
\node[draw, anchor = center, circle, fill = black, inner sep = 0pt, minimum width=3pt] (Q) at (1.8,1.7) {};
\node[anchor = north west] at (P) {\small $(\xi_2,\theta_2)$};
\end{tikzpicture}
\caption{The figure sketches the argument for Proposition \ref{prop: position}(ii). The dashed blue line denotes the form that $(\xi,\theta)\,(t)$ must be if the initial condition were type 1. The red line, which has an extremum of $\xi$, is an estimate of the actual evolution starting at $(\xi_2,\theta_2)$.}
\label{fig: position(ii)}
\end{center}
\end{figure}

\begin{lemma}\label{lemma: m-welldef}
If $\xi_0=\frac g4 \ln m+\epsilon$ is type 1, then there are $T_2(\epsilon)>T_1(\epsilon)>0$ so that $\theta'(T_2)=0$ and $\frac{\xi'(T_1)}{\theta'(T_1)}=-1$, while $\xi'(t)<0$ and $\theta'(t)>0$ for all $t\in(0,T_2)$.\end{lemma}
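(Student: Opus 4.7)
The plan is to use the fact that, on any interval where $\sin(2\theta)>0$ and $\sin\alpha\neq 0$, the slope $\xi'/\theta'=\cot\alpha$, so the equation $\xi'(T_1)/\theta'(T_1)=-1$ reduces to $\alpha(T_1)=3\pi/4\pmod\pi$. First I would compute $\alpha'(0)$ directly from $(*)$: the initial condition lies on the symmetry line so $\cos\alpha(0)=0$ kills the $l(\theta)$ term, and $\tfrac{4}{g}\xi_0=\ln m+\tfrac{4\epsilon}{g}$ gives $\alpha'(0)=\sin(2\theta^*)\,m\,(e^{4\epsilon/g}-1)>0$. Consequently $\alpha$ enters $(\pi/2,\pi)$ immediately, yielding $\cos\alpha<0$ and $\sin\alpha>0$, i.e.\ $\xi'<0$ and $\theta'>0$, for small positive $t$.

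Next I would extend these sign statements to all of $(0,T)$, where $T$ is the time from the type 1 hypothesis. Because $\sin(2\theta)>0$ throughout $\mathcal D$, the condition $\xi'(t)\neq 0$ on $(0,T)$ forces $\cos\alpha(t)\neq 0$; by continuity $\cos\alpha$ therefore stays strictly negative on $(0,T)$, so $\xi'(t)<0$ there. To locate $T_2$, observe that $\theta(0)=\theta(T)=\theta^*$ and $\theta'(0)>0$, so Rolle's theorem produces a first zero $T_2$ of $\theta'$ in $(0,T)$, with $\theta'>0$ on $[0,T_2)$. One checks $T_2<T$ strictly: otherwise $\theta$ would be strictly increasing on $[0,T)$ and still return to $\theta^*$ at $T$, an impossibility.

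At $T_2$ we have $\cos\alpha(T_2)<0$ (since $T_2<T$) and $\sin\alpha(T_2)\sin(2\theta(T_2))=0$; as $\theta(T_2)>\theta^*$ the second factor is nonzero, so $\sin\alpha(T_2)=0$. Continuity of $\alpha$ pins down $\alpha(T_2)=\pi$, since any approach to $0$ or $2\pi$ would require $\sin\alpha$ to vanish earlier, producing a prior zero of $\theta'$ and contradicting minimality of $T_2$. The intermediate value theorem applied to $\alpha$ on $[0,T_2]$ then yields $T_1\in(0,T_2)$ with $\alpha(T_1)=3\pi/4$, whence $\xi'(T_1)/\theta'(T_1)=\cot(3\pi/4)=-1$, together with $\xi'(T_1)<0$ and $\theta'(T_1)>0$ as required. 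No step is genuinely difficult; the only place demanding any care is this last sign book-keeping to make sure $\alpha$ actually traverses $(\pi/2,\pi)$ and hence sweeps through $3\pi/4$, but this follows cleanly from the argument above.
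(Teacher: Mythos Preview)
Your proof is correct and follows essentially the same approach as the paper. Both arguments use the type~1 hypothesis to keep $\xi'<0$ on $(0,T)$, locate the first extremum $T_2$ of $\theta$ via the return condition $\theta(0)=\theta(T)=\theta^*$, and then apply the intermediate value theorem to produce $T_1$; the only cosmetic difference is that you track the angle $\alpha$ (from $\pi/2$ to $\pi$, through $3\pi/4$) while the paper tracks the slope $\xi'/\theta'=\cot\alpha$ (from $0$ to $-\infty$, through $-1$), which are the same thing.
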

\begin{proof}
Assuming that $\xi_0$ is of type 1 means that there is a time $T>0$ for which $\theta(T)=\theta^*$ and $\xi'(t)\neq0, \theta(t)\neq\theta^*$ for all $t\in(0,T)$. Since $\theta'(0) =\sin(2\theta^*)>0$ one finds that $\theta'(t)>0$ for small $t$, whence $\theta(t)$ must go through an extremum before it can go back to $\theta^*$ and there is a $T_2<T$ so that $\theta'(T_2)=0$. On the other hand one has $\xi''(0) = -\sin(2\theta^*) m (e^{\frac4g\epsilon} -1)<0$, whence $\xi'(t)<0$ for all $t\in(0,T]$, in particular for all $t\in(0,T_2)$.

This means that $\frac{\xi'(t)}{\theta'(t)}$ is $0$ at $t=0$ and diverges to $-\infty$ at $t=T_2$. There must then be a $T_1$ so that $\frac{\xi'(T_1)}{\theta'(T_1)}=-1$.
\end{proof}
\begin{lemma}\label{lemma: m-theta1pi}
For $\epsilon$ small enough there is only one pair $(T_1, T_2)$ satisfying the conditions of Lemma \ref{lemma: m-welldef} and $\theta(T_1)\to\frac\pi2$ as $\epsilon\to0$.\end{lemma}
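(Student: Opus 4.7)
The plan is to combine continuous dependence on initial data with analysis of the limiting $\epsilon=0$ trajectory. At $\epsilon=0$ the initial datum $(\frac{g}{4}\ln m,\theta^*,\frac{\pi}{2})$ launches the trivial solution with $\xi\equiv\frac{g}{4}\ln m$ and $\alpha\equiv\frac{\pi}{2}$ identified in subsection~\ref{sec: geod}, whose $\theta$-component satisfies $\theta_0'=\sin(2\theta_0)$ with $\theta_0(0)=\theta^*$; in particular $\theta_0$ strictly increases with $\theta_0(t)\to\frac{\pi}{2}$ as $t\to\infty$, and this trajectory never attains $\alpha=\frac{3\pi}{4}$.

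First I would establish $T_1(\epsilon)\to\infty$ and $\theta(T_1(\epsilon))\to\frac{\pi}{2}$ as $\epsilon\to 0$. Fix $\eta\in(0,\frac{\pi}{4})$ and pick $T_\eta>0$ with $\theta_0(T_\eta)>\frac{\pi}{2}-\eta$. By continuous dependence of solutions of $(*)$ on initial data, uniformly on compact time intervals, for $\epsilon$ sufficiently small the solution satisfies $|\alpha(t)-\frac{\pi}{2}|<\eta$ and $|\theta(t)-\theta_0(t)|<\eta$ throughout $[0,T_\eta]$. The first estimate forces $\alpha(t)<\frac{3\pi}{4}$ on $[0,T_\eta]$, so $T_1(\epsilon)>T_\eta$; combined with the strict monotonicity of $\theta$ on $(0,T_2)$ and with $T_\eta<T_1(\epsilon)<T_2(\epsilon)$, this yields $\theta(T_1(\epsilon))\geq\theta(T_\eta)>\frac{\pi}{2}-2\eta$. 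Sending $\eta\to 0$ gives both claims.

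The main obstacle is uniqueness of the pair $(T_1,T_2)$. Uniqueness of $T_2$ is immediate, since the clause $\theta'(t)>0$ on $(0,T_2)$ forces $T_2$ to be the smallest positive zero of $\theta'$. For $T_1$ I would pass to the graph formulation: on $(0,T_2)$ the map $t\mapsto\theta(t)$ is a diffeomorphism onto $(\theta^*,\theta(T_2))$, and $\xi'/\theta'=-1$ translates into $d\xi/d\theta(\theta)=-1$. At any such crossing the equation $(**)$ gives
$$\frac{d^2\xi}{d\theta^2}(\theta)=-2\bigl(e^{\frac{4}{g}\xi(\theta)}-m-2H(\theta)\bigr).$$
The previous step forces the smallest crossing $\theta_1$ to satisfy $\theta_1\to\frac{\pi}{2}$ as $\epsilon\to 0$, and by monotonicity every other crossing lies in $(\theta_1,\theta(T_2))$ and is therefore also arbitrarily close to $\frac{\pi}{2}$. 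Since $H(\theta)=\frac{m_1}{2}\cot\theta-\frac{m_2}{2}\tan\theta\to-\infty$ as $\theta\to\frac{\pi}{2}$, for $\epsilon$ small one has $m+2H(\theta)<0<e^{\frac{4}{g}\xi(\theta)}$ at every crossing, so $d^2\xi/d\theta^2(\theta)<0$ strictly there.

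To close, suppose there were two crossings $\theta^{(1)}<\theta^{(2)}$. Between them $d\xi/d\theta$ must leave and return to the value $-1$, either going above (which forces $d^2\xi/d\theta^2(\theta^{(1)})\geq 0$) or going below (which forces $d^2\xi/d\theta^2(\theta^{(2)})\geq 0$); either possibility contradicts the strict negativity just established. Hence the crossing is unique and $T_1$ is uniquely determined, completing the proof.
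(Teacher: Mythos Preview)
Your proof is correct and follows essentially the same approach as the paper: both identify the $\epsilon=0$ trajectory as the trivial solution with $\alpha\equiv\tfrac\pi2$, use continuous dependence on compacta to force any admissible $T_1$ to satisfy $\theta(T_1)\to\tfrac\pi2$, and then exploit $H(\theta)\to-\infty$ in the graph equation $(**)$ to get $\tfrac{d^2\xi}{d\theta^2}<0$ near the crossing and conclude uniqueness. The only cosmetic difference is that the paper checks $\tfrac{d^2\xi}{d\theta^2}<0$ on the whole region $\tfrac{d\xi}{d\theta}\le-1$ (so the slope, once at $-1$, strictly decreases thereafter), whereas you evaluate $(**)$ only at the level set $\tfrac{d\xi}{d\theta}=-1$ and argue by contradiction between consecutive crossings; both arguments are equivalent here.
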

\begin{proof}
The time $T_2$ is obviously unique.

On the other hand the initial condition $\epsilon=0$ has as solution the line $(\theta, \xi, \alpha) = (\arctan(\tan(\theta^*)e^{2t}),\frac g4\ln m, \frac\pi2)$. So one finds that as $\epsilon\to0$ the solution and its derivatives converge uniformly on compacta to the above curve, in particular for on any finite interval $[0,T]$ one can make $\frac{\xi'(t)}{\theta'(t)}$ arbitrarily small for all $t\in[0,T]$ (by taking $\epsilon$ small), while $\theta(T)$ is arbitrarily close to $\frac\pi2$ (by taking $T$ large and then $\epsilon$ small). This means that as $\epsilon\to0$ one must have $\frac\pi2-\theta (T_1(\epsilon))\to0$, where $T_1$ is any of the times satisfying Lemma \ref{lemma: m-welldef}.

Looking however at the graph ODE $(**)$
$$\frac{d^2\xi}{d\theta^2}=-(1+(\frac{d\xi}{d\theta})^2)(e^{\frac4g\xi}-m +2H(\theta)\frac{d\xi}{d\theta})$$
it follows if $\frac{d\xi}{d\theta}\leq-1$ and $\theta$ is close enough to $\frac\pi2$ while $\xi$ is not too large that then $\frac{d^2\xi}{d\theta^2}<0$, since $H(\theta)\to-\infty$ as $\theta\to\frac\pi2$. For $\epsilon$ small enough one gets that for any pair $(T_1,T_2)$ satisfying Lemma \ref{lemma: m-welldef} there is no pair $(T_1',T_2)$ satisfying the lemma with $T_1'>T_1$.
\end{proof}

This allows us to introduce the following notation:
$$\theta_1\defeq \theta(T_1),\qquad \theta_2\defeq\theta(T_2), \qquad \xi_2 \defeq \xi(T_2).$$
\begin{lemma}\label{lemma: m-xi1}
For $\epsilon$ small enough one has that $\xi(T_1)\leq \frac g4\ln m$.
\end{lemma}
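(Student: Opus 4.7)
I plan to argue by contradiction. Suppose $\xi(T_1) > \xi^* := \frac{g}{4}\ln m$; by the monotonicity established in Lemma~\ref{lemma: m-welldef}, $\xi(t) \in (\xi^*, \xi_0]$ throughout $[0, T_1]$, so in particular $0 < e^{\frac{4}{g}\xi} - m \leq m(e^{\frac{4}{g}\epsilon} - 1) = O(\epsilon)$.

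The main step is to work in the graph representation on $[\theta^*, \theta_1]$ and introduce the angle variable $\beta := \arctan(\xi'_\theta) \in [-\pi/4, 0]$, under which $(**)$ simplifies to the scalar first-order equation
\[
\beta'(\theta) = -\bigl(e^{\frac{4}{g}\xi} - m\bigr) - 2H(\theta)\tan\beta.
\]
Under the contradiction hypothesis both terms on the right are nonpositive (since $H < 0$ on $(\theta^*,\pi/2)$ and $\tan\beta \leq 0$), so integrating from $\theta^*$ (where $\beta = 0$) to $\theta_1$ (where $\beta = -\pi/4$) yields
\[
\frac{\pi}{4} = \int_{\theta^*}^{\theta_1}\bigl(e^{\frac{4}{g}\xi}-m\bigr)\,d\theta + 2\int_{\theta^*}^{\theta_1}|H(\theta)|\,|\xi'_\theta|\,d\theta.
\]
The first integral is at most $\frac{\pi}{2}m(e^{\frac{4}{g}\epsilon}-1) = O(\epsilon)$, so for $\epsilon$ small the second integral exceeds $\pi/8$.

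Since $\xi$ is strictly monotone in $\theta$, changing variables via $|\xi'_\theta|\,d\theta = -d\xi$ in the second integral gives $2\int_{\xi(\theta_1)}^{\xi_0}|H(\theta(\xi))|\,d\xi \geq \pi/8$. The $\xi$-interval has length at most $\epsilon$ and $|H|$ is maximized at $\theta_1$, whence $|H(\theta_1)| \geq \pi/(16\epsilon)$. Because $|H(\theta)|$ blows up near $\pi/2$ only through the $\frac{m_2}{2}\tan\theta$ term, this forces $\tan\theta_1 \geq c/\epsilon$ for a positive constant $c$.

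The main obstacle is now to derive a contradicting upper bound on $\tan\theta_1$. I expect to obtain this by bounding $T_1$ in the original time parameter through the $\alpha$-equation: writing $\mu := \alpha - \pi/2 \in [0,\pi/4]$, both terms of $\mu'$ are nonnegative under the hypothesis, with an initial source $\mu'(0) \approx \frac{4m\sin(2\theta^*)}{g}\epsilon$ and the eventual exponential amplification $\mu' \geq 2\sin\mu\cdot|l(\theta)|$ once $\theta$ moves away from $\theta^*$, yielding $T_1 = O(\ln(1/\epsilon))$. Combined with $\tan\theta_1 \leq \tan\theta^*\cdot e^{2T_1}$ (from $\theta' \leq \sin(2\theta)$), this gives $\tan\theta_1 \leq C\epsilon^{-k}$; the remaining technical work is to sharpen constants so that $k < 1$, delivering the desired contradiction with $\tan\theta_1 \geq c/\epsilon$ for $\epsilon$ sufficiently small.
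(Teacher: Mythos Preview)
Your first half is correct and rather clean: the substitution $\beta=\arctan(\xi'_\theta)$ linearises $(**)$ nicely, and the change of variables $|\xi'_\theta|\,d\theta=-d\xi$ together with $\xi_0-\xi(\theta_1)<\epsilon$ genuinely yields $|H(\theta_1)|\geq \pi/(16\epsilon)$, hence $\tan\theta_1\geq c/\epsilon$.

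The gap is in the second half. Your proposed route to a contradicting upper bound---estimate $T_1$ and then feed it into $\tan\theta_1\leq\tan\theta^*\,e^{2T_1}$---cannot produce an exponent $k<1$, no matter how carefully constants are tracked. Indeed, your own lower bound $\tan\theta_1\geq c/\epsilon$, combined with that very inequality $\tan\theta_1\leq\tan\theta^*\,e^{2T_1}$, forces
\[
T_1\;\geq\;\tfrac12\ln(1/\epsilon)+O(1),
\]
so any upper bound $T_1\leq C\ln(1/\epsilon)$ necessarily has $C\geq\tfrac12$, giving $k=2C\geq 1$. The two exponential rates (for $\mu$ via $|l|\leq m_2$ and for $\tan\theta$ via $\sin(2\theta)$) are matched in such a way that the loop closes at exactly $k=1$; ``sharpening constants'' will not break the tie. (There is a secondary issue: the bound $\mu'\geq 2\sin\mu\,|l(\theta)|$ alone gives nothing since $\mu(0)=0$, and under the contradiction hypothesis the source term $\cos\mu\sin(2\theta)(e^{4\xi/g}-m)$ has no positive lower bound, so even $T_1=O(\ln(1/\epsilon))$ is not immediate.)

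The paper avoids the time parameter altogether. Staying in the $\theta$-graph, it first obtains the two-sided estimate $(\tfrac\pi2-\theta_1)^{m_2}\asymp\epsilon$, then introduces an intermediate point $\theta_0\in(\theta^*,\theta_1)$ defined by $H(\theta_0)=(\xi'_\theta(\theta_0))^{-1}$, shows that the ratio $q=(\tfrac\pi2-\theta_0)/(\tfrac\pi2-\theta_1)$ is unbounded as $\epsilon\to0$, and finally integrates a pointwise lower bound on $|\xi'_\theta|$ over $[\theta_0,\theta_1]$ to obtain $|\xi(\theta_1)-\xi(\theta_0)|>\epsilon$. That last inequality is the contradiction, and it is obtained by estimating the $\xi$-drop directly rather than by comparing two asymptotic rates for $\theta_1$.
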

\begin{proof}
We assume that $\xi(T_1)>\frac g4\ln m$ and get a contradiction. The first step is to show that this assumption leads to constants $c_1, c_2>0$ (independent of $\epsilon$) so that for $\epsilon$ small enough one has:
\begin{equation}c_1 \epsilon < (\frac\pi2-\theta_1)^{m_2}<c_2\epsilon.\label{eq: epsilon-wedge}\end{equation}
Since $\xi'(t)\leq0$ for all $t\in(0,T_2)$ we may assume $\xi(t)>\frac g4\ln m$ for such $t$. Then from the graph ODE
$$\frac{d^2\xi}{d\theta^2}=-(1+(\frac{d\xi}{d\theta})^2)(e^{\frac4g\xi}-m +2H(\theta)\frac{d\xi}{d\theta})$$
one gets that $\frac{d^2\xi}{d\theta^2}<0$ for all $\theta\in(\theta^*,\theta_1)$ (recall that $H(\theta)<0$ for $\theta>\theta^*$) and then $\frac{d\xi}{d\theta}$ is strictly decreasing in this interval. Monotonicity of $\frac{d\xi}{d\theta}$ and mean value theorem then imply
\begin{equation}
\frac{d\xi}{d\theta}\left(\frac{\theta_1-\theta^*}2\right) > -\epsilon\, \frac{\theta_1-\theta^*}2,\qquad \xi\left(\frac{\theta_1-\theta^*}2\right) > \frac g4 \ln m+\frac\epsilon2.\label{eq: xip-bound-xi1}
\end{equation}

Combining the second inequality with the graph ODE yields (recall that $H(\theta)<0$ if $\theta>\theta^*$) additionally the bound $\frac{d\xi}{d\theta}\left(\frac{\theta_1-\theta^*}2\right) < -\frac{m\epsilon}g(\theta_1-\theta^*)+O(\epsilon^2)$. Using that $\theta_1\to\frac\pi2$ as $\epsilon\to0$ then gives constants $\widetilde c_1, \widetilde c_2$ so that
\begin{equation}\widetilde c_1\epsilon < \left|\frac{d\xi}{d\theta}\left(\frac{\theta_1-\theta^*}2\right)\right|<\widetilde c_2\epsilon.\label{eq: midpoint-epsilon}\end{equation}
Rewriting the graph ODE as
\begin{equation}
\frac{\frac{d^2\xi}{d\theta^2}}{\frac{d\xi}{d\theta}(1+(\frac{d\xi}{d\theta})^2)} = - \frac{e^{\frac4g \xi}-m}{\frac{d\xi}{d\theta}}-2H(\theta),\label{eq: log-xigr}
\end{equation}
one notes that the first term on the right-hand side is $O(1)$ in the interval $(\frac{\theta_1-\theta^*}2,\theta_1)$ by (\ref{eq: midpoint-epsilon}) and monotonicity of $\frac{d\xi}{d\theta}$, and further that the left-hand side has $-\ln(\frac{|\frac{d\xi}{d\theta}|}{\sqrt{1+(\frac{d\xi}{d\theta})^2}})$ as an anti-derivative. Integrating (\ref{eq: log-xigr}) over $(\frac{\theta_1-\theta^*}2,\theta_1)$ then gives
$$\ln(\left|\frac{d\xi}{d\theta}\left(\frac{\theta_1-\theta^*}2\right)\right|) + O(1) =  m_2\ln(\cos(\theta_1))+O(1).$$
(For the left-hand side: From $-\epsilon\frac{\theta_1-\theta^*}2<\frac{d\xi}{d\theta}\leq-1$ for $\theta\in(\frac{\theta_1-\theta^*}2,\theta_1)$ the $1+(\frac{d\xi}{d\theta})^2$ term in the anti-derivative is absorbed into the $O(1)$, similarly from $\frac{d\xi}{d\theta}(\theta_1)=-1$ only the lower boundary of the integral has a contribution.)

The approximation $\cos(\theta) = \frac\pi2-\theta + O((\frac\pi2-\theta)^3)$ then gives the bounds (\ref{eq: epsilon-wedge}) from (\ref{eq: midpoint-epsilon}).

For the next step note that $H(\theta)-({\frac{d\xi}{d\theta}(\theta)})^{-1}$ becomes $+\infty$ as $\theta\to\theta^*$ and $H(\theta_1)+1$ as $\theta\to\theta_1$, which for $\epsilon$ small enough will be negative. Hence for small enough $\epsilon$ there is a $\theta_0\in(\theta^*, \theta_1)$ so that $H(\theta_0)=({\frac{d\xi}{d\theta}(\theta_0)})^{-1}$. In the same way as Lemma \ref{lemma: m-theta1pi} one shows that $\theta_0\to\frac\pi2$ as $\epsilon\to0$.

We let
$$q\defeq\frac{\pi/2-\theta_0}{\pi/2-\theta_1}$$
and show next that there is a constant $c_3$ so that $q^{m_2}\geq \frac{c_3}{\pi/2-\theta_0}$, and hence that $q$ grows unboundedly as $\epsilon\to0$.

To show this we again integrate (\ref{eq: log-xigr}), this time from $\theta_0$ to $\theta\leq \theta_1$. The result is:
$$\ln(|\frac{d\xi}{d\theta}\left(\theta_0\right)|)-\ln(|\frac{d\xi}{d\theta}(\theta)|) + O(1)=m_2\ln\left(\frac{\cos(\theta)}{\cos(\theta_0)}\right)+O(1).$$
(As before the $1+(\frac{d\xi}{d\theta})^2$ term on the left-hand side is absorbed into the $O(1)$, while the $\frac{e^{\frac4g\xi}-m}{\frac{d\xi}{d\theta}}$ term on the right is also $O(1)$ by (\ref{eq: midpoint-epsilon}) and monotonicity of $\frac{d\xi}{d\theta}$.)

Noting that $\frac{d\xi}{d\theta}(\theta_0)=\frac1{H(\theta_0)}\approx -m_2 (\frac\pi2-\theta_0) + O((\frac\pi2-\theta_0)^2)$ and again using the expansion of cosine close to $\frac\pi2$ implies the existence of $c_3, c_4>0$ so that:
\begin{equation}
c_4\left|\frac{d\xi}{d\theta}(\theta)\right|\geq \left(\frac{\pi/2-\theta_0}{\pi/2-\theta}\right)^{m_2} (\frac\pi2-\theta_0)\geq c_3 \left|\frac{d\xi}{d\theta}(\theta)\right|.\label{eq: xip-bound}
\end{equation}
Taking $\theta=\theta_1$ shows $q^{m_2}\geq \frac{c_3}{\pi/2-\theta_0}$, in particular $q\to\infty$ as $\epsilon\to0$. On the other hand, if one integrates $\frac{d\xi}{d\theta}(\theta)$ over $(\theta_0,\theta_1)$ one finds by (\ref{eq: xip-bound}) that
$$c_4|\xi(\theta_1)-\xi(\theta_0)|\geq \begin{cases}(\frac\pi2-\theta_0)^2\ln q& m_2=1\\ \frac1{m_2-1}(\frac\pi2-\theta_0)^2(q^{m_2-1}-1)& m_2>1\end{cases}.$$
Noting that $(\frac\pi2-\theta_0)^2 =\frac{(\pi/2-\theta_1)^{m_2}}{(\pi/2-\theta_0)^{m_2}}q^{m_2} (\frac\pi2-\theta_0)^2\geq \epsilon\, c_1c_3 (\frac\pi2-\theta_0)^{1-m_2}$ and using unboundedness of $q$ one sees that $|\xi(\theta_1)-\xi(\theta_0)|$ will be larger than $\epsilon$, contradicting our assumption that $\xi(\theta_1)\geq \frac g4\ln(m)$.
\end{proof}

\begin{lemma}\label{lemma: m-xi2}
For $\theta_1$ sufficiently close to $\frac\pi2$ one has that
$$\xi(T_2) \leq\frac g4\ln(m) -  \left(2^{\frac1{2m_2}}-1\right)(\frac\pi2 -\theta_2) + O((\frac\pi2-\theta_2)^2).$$
\end{lemma}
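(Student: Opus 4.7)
The strategy is to track the slope angle $u := \arctan(d\xi/d\theta)$ as a function of $\delta := \pi/2 - \theta$. On the interval $[\theta_1,\theta_2]$ the graph $\xi(\theta)$ is well defined with $u$ decreasing from $-\pi/4$ at $\delta_1$ to $-\pi/2$ at $\delta_2$; accordingly $y := \sin u$ decreases from $-1/\sqrt{2}$ to $-1$, and by Lemma \ref{lemma: m-xi1} one has $e^{4\xi/g} - m \leq 0$ throughout. Differentiating $y$ in $\theta$ using $(**)$ and rewriting in terms of $\delta$ yields
\[ \frac{dy}{d\delta} \;=\; \cos u \,(e^{4\xi/g} - m) \;+\; 2H(\theta)\,y. \]

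Since $-2H(\theta) = m_2\tan\theta - m_1\cot\theta = m_2/\delta + O(\delta)$ near $\delta = 0$, the plan is to use the integrating factor $\mu(\delta) := \exp(-\int 2H\,d\delta')$, which takes the form $\mu(\delta) = \delta^{m_2}\rho(\delta)$ with $\rho$ smooth, positive, and $\rho(\delta) = 1 + O(\delta^2)$. Then $z := \mu y$ satisfies $dz/d\delta = \mu \cos u \,(e^{4\xi/g} - m) \leq 0$, so $z$ is non-increasing in $\delta$. Evaluating at the endpoints gives $\mu(\delta_1) \geq \sqrt{2}\,\mu(\delta_2)$, whence $\delta_1 \geq 2^{1/(2m_2)}\delta_2 - O(\delta_2^3)$. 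A matching upper bound $\delta_1 \leq 2^{1/(2m_2)}\delta_2 + O(\delta_2^2)$ follows from the crude estimate $|dz/d\delta| \leq m\mu$ together with the assumption that $\delta_1$ is small.

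The same monotonicity gives the pointwise lower bound $|y(\delta)| \geq \mu(\delta_2)/\mu(\delta)$ on $[\delta_2,\delta_1]$. Since $t \mapsto t/\sqrt{1-t^2}$ is increasing on $[0,1)$, this yields
\[ \xi_1 - \xi_2 \;=\; \int_{\delta_2}^{\delta_1} \frac{|y|}{\sqrt{1-y^2}}\,d\delta \;\geq\; \int_{\delta_2}^{\delta_1} \frac{\mu(\delta_2)}{\sqrt{\mu(\delta)^2-\mu(\delta_2)^2}}\,d\delta. \]
Substituting first $s = \mu(\delta)/\mu(\delta_2)$ (so that $ds/d\delta = -2H(\theta)\,s$) and then $s = \sigma^{m_2}$, the right-hand side reduces to $\delta_2\,(1+O(\delta_2^2))\int_1^{\sigma_1} d\sigma/\sqrt{\sigma^{2m_2}-1}$, where $\sigma_1 = (\mu(\delta_1)/\mu(\delta_2))^{1/m_2} \geq 2^{1/(2m_2)}$. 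For $\sigma \in (1, 2^{1/(2m_2)})$ one has $\sigma^{2m_2} - 1 \in (0,1)$, so the integrand strictly exceeds $1$, giving $\int_1^{\sigma_1} d\sigma/\sqrt{\sigma^{2m_2}-1} > 2^{1/(2m_2)} - 1$. Hence $\xi_1 - \xi_2 \geq (2^{1/(2m_2)} - 1)\delta_2 + O(\delta_2^3)$, and combining with $\xi_1 \leq \frac{g}{4}\ln m$ from Lemma \ref{lemma: m-xi1} gives the claim.

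The main obstacle is the bookkeeping of errors near $\delta = \delta_2$, where the integrand $|y|/\sqrt{1-y^2}$ has an integrable square-root singularity and the graph slope $d\xi/d\theta$ is blowing up. The careful choice of integrating factor $\mu = \delta^{m_2}\rho$, absorbing the subleading correction to $-2H$, makes $z$ \emph{exactly} monotone, so the pointwise lower bound $|y| \geq \mu(\delta_2)/\mu(\delta)$ is tight enough at both endpoints to yield the precise constant $2^{1/(2m_2)} - 1$ despite the near-singular integrand.
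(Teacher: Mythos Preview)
Your proof is correct and shares its core mechanism with the paper: the monotonicity of $z=\mu y$ is precisely the paper's integration of the log-derivative form
\[
\frac{\frac{d^2\xi}{d\theta^2}}{\frac{d\xi}{d\theta}\bigl(1+(\frac{d\xi}{d\theta})^2\bigr)} \;=\; -\,\frac{e^{4\xi/g}-m}{\frac{d\xi}{d\theta}}\;-\;2H(\theta),
\]
and both routes produce the same asymptotic $\delta_1/\delta_2\to 2^{1/(2m_2)}$.

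The only genuine difference is in the $\xi$-drop estimate. The paper observes that on $[\theta_1,\theta_2]$ one has $|d\xi/d\theta|\ge 1$ (the slope is $-1$ at $\theta_1$, tends to $-\infty$ at $\theta_2$, and by the uniqueness of $T_1$ from Lemma~\ref{lemma: m-theta1pi} never returns to $-1$), whence $\xi_1-\xi_2\ge\theta_2-\theta_1=\delta_1-\delta_2$ in one line; plugging in the asymptotic for $\delta_1-\delta_2$ finishes. Your route---the pointwise bound $|y|\ge\mu(\delta_2)/\mu(\delta)$ followed by the substitutions $s=\mu(\delta)/\mu(\delta_2)$ and $s=\sigma^{m_2}$---is sound and in fact yields the strictly larger leading constant $\int_1^{2^{1/(2m_2)}}d\sigma/\sqrt{\sigma^{2m_2}-1}$, but this sharpening is not needed for the lemma: since $|y|\ge 1/\sqrt{2}$ throughout, the integrand $|y|/\sqrt{1-y^2}$ is already $\ge 1$, which recovers the paper's one-line bound directly.
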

\begin{proof}
By Lemma \ref{lemma: m-xi1} one has that $\xi(T_1) \leq \frac g4\ln(m)$. Additionally for $\epsilon$ small enough $\frac {d^2\xi}{d\theta^2}(\theta)<0$ for all $\theta\in(\theta_1,\theta_2)$, whence $\xi(T_2) \leq \xi(T_1) - (\theta_2-\theta_1)$ and the statement reduces to checking $\theta_2-\theta_1 = \left(2^{\frac1{2m_2}}-1\right)(\frac\pi2-\theta_2) +O((\frac\pi2-\theta_2)^2)$.

From the graph ODE $(**)$ one recovers:
$$\frac{\frac{d^2\xi}{d\theta^2}}{\frac{d\xi}{d\theta}(1+(\frac{d\xi}{d\theta})^2)} = - \frac{e^{\frac4g \xi}-m}{\frac{d\xi}{d\theta}}-2H(\theta).$$
The absolute value of the first summand on the right-hand side is bounded by $m$ over the interval $(\theta_1,\theta_2)$, so integrating the equation from $\theta_1$ to $\theta_2$ gives:
$$\frac12\ln(2) = O(\frac\pi2-\theta_1)-m_2\ln\left(\frac{\cos(\theta_2)}{\cos(\theta_1)}\right)-m_1\ln\left(\frac{\sin(\theta_2)}{\sin(\theta_1)}\right).$$
As $\theta$ becomes arbitrarily close to $\frac\pi2$ we use that $\cos(\theta) = \frac\pi2-\theta+O((\frac\pi2-\theta)^3)$, $\sin(\theta)=1+O((\frac\pi2-\theta)^2)$ to get:
$$\frac12\ln(2)=-m_2 \ln\left(\frac{\frac\pi2-\theta_2}{\frac\pi2 -\theta_1}\right) + O(\frac\pi2-\theta_1).$$
Together with some arithmetic this implies the statement about $\theta_2-\theta_1$.
\end{proof}

\begin{lemma}
If $\epsilon$ is small enough then $\xi_0=\frac g4\ln m+\epsilon$ is not type 1.
\end{lemma}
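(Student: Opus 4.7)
I argue by contradiction: assume $\xi_0 = \frac{g}{4}\ln m + \epsilon$ is of type~1 for a sequence of $\epsilon$ tending to $0$. Lemmas~\ref{lemma: m-welldef}--\ref{lemma: m-xi2} then yield $T_2 > T_1 > 0$ with $\theta_2 := \theta(T_2) \to \frac{\pi}{2}$ and the crucial upper bound
$$\xi_2 \;\leq\; \tfrac{g}{4}\ln m - c\,\delta + O(\delta^2), \qquad c := 2^{1/(2m_2)}-1 > 0,\ \ \delta := \tfrac{\pi}{2}-\theta_2.$$
On $(T_2, T)$ the type~1 hypothesis forces $\xi'(t) < 0$ and $\theta'(t) < 0$, so the trajectory traces out a smooth graph $\xi = \xi(\theta)$ over $(\theta^*, \theta_2)$ with slope $u(\theta) := \frac{d\xi}{d\theta} > 0$ and $u(\theta_2) = +\infty$. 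The goal is to show that $u$ is forced to vanish at some $\theta_0 \in (\theta^*, \theta_2)$; at the corresponding time $t_0 \in (T_2, T)$ one then has $\xi'(t_0) = 0$, contradicting the definition of type~1.

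From the graph ODE $(**)$ the slope satisfies $u'(\theta) = -(1+u^2)(E + 2Hu)$, where $E(\theta) := e^{(4/g)\xi(\theta)}-m$ is strictly negative along the lower graph (since $\xi(\theta) \leq \xi_2 < \frac{g}{4}\ln m$) and $H(\theta) < 0$ on $(\theta^*, \frac{\pi}{2})$; both summands inside the parenthesis are therefore negative and $u'(\theta) > 0$, confirming the monotone decrease of $u$ as $\theta$ decreases. Substituting $\eta := \arctan u \in (0, \tfrac{\pi}{2}]$ reduces the problem to the first-order scalar ODE
$$\eta'(\theta) \;=\; -E(\theta) - 2H(\theta)\tan\eta, \qquad \eta(\theta_2) = \tfrac{\pi}{2},$$
with both summands on the right strictly positive. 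The task becomes: show $\eta(\theta_0) = 0$ for some $\theta_0 \in (\theta^*, \theta_2)$.

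The two driving terms act on complementary ranges. Near $\theta_2$, where $\tan\eta$ is large, the $-2H\tan\eta$ contribution dominates; comparing with the pure $H$-equation $\bar\eta' = -2H\tan\bar\eta$, whose closed-form solution is $\sin\bar\eta(\theta) = \exp(-\int_\theta^{\theta_2}(-2H)\,ds)$ (obtained by integrating $\tfrac{d}{d\theta}\ln\sin\bar\eta = -2H$), the substitution $z := -\ln\sin\eta$ gives $z'-\bar z' = E\cot\eta < 0$, so $z > \bar z$, i.e.\ $\eta \leq \bar\eta$. Using the explicit primitive
$$\int_\theta^{\theta_2}(-2H(s))\,ds \;=\; m_1\ln\tfrac{\sin\theta}{\sin\theta_2} + m_2\ln\tfrac{\cos\theta}{\cos\theta_2}$$
together with $\cos\theta_2 = \sin\delta \sim \delta$, one reads off $\eta(\theta^*) \lesssim \delta^{m_2}$ as $\delta \to 0$. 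In this small-$\eta$ regime the ODE linearises to $\eta' \approx -E - 2H\eta$, and the uniform lower bound $-E(\theta) \geq C\delta > 0$ (from Lemma~\ref{lemma: m-xi2} together with $\xi \leq \xi_2$) drives $\eta$ across $0$ in a controlled additional $\theta$-interval. Together these two effects produce the required $\theta_0 \in (\theta^*, \theta_2)$.

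The main technical difficulty is quantitatively coupling the two regimes, and in particular handling the borderline case $m_2 = 1$ (relevant, e.g., to the $S^1\times S^1\times S^1$ Clifford-torus setting) in which the pure-$H$ estimate only gives $\eta(\theta^*) \sim \delta$, so a naive $E$-driven phase would require a $\theta$-interval of order $1$. Closing this gap requires sharpening either the pure-$H$ estimate or the lower bound on $-E$; the natural refinement is to observe that along the lower graph $\xi$ descends by $\xi_2 - \xi(\theta) = \int_\theta^{\theta_2} u \, ds = O(\delta\log(1/\delta))$ below $\xi_2$ (as can be checked by integrating $u \leq \tan\bar\eta$ using the explicit $\bar\eta$), which inflates $-E$ from $C\delta$ to order $\delta\log(1/\delta)$ away from $\theta_2$. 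With this improved bound the $E$-phase closes over a $\theta$-interval of size $\lesssim 1/\log(1/\delta) \to 0$, yielding the strict inequality $\theta_0 > \theta^*$ and the sought contradiction.
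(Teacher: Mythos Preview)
Your approach coincides with the paper's: on the lower graph set $u=\tan\eta$ so that $(\ln\sin\eta)'=-E\cot\eta-2H$, drop the positive term $-E\cot\eta$ to obtain the comparison $\eta\le\bar\eta$ with the explicit pure-$H$ solution, and then use $-E\ge C\delta$ from Lemma~\ref{lemma: m-xi2} to force $\eta$ through zero on a $\theta$-interval of length $\lesssim\delta^{\,m_2-1}$. For $m_2\ge 2$ this is the paper's argument verbatim (the paper evaluates at the midpoint $\tfrac{\theta^*+\theta_2}{2}$ rather than at $\theta^*$, which is immaterial).

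Your handling of the borderline case $m_2=1$ has a genuine gap. You correctly identify that the fix is to improve $-E$ via the descent $\xi_2-\xi(\theta)=\int_\theta^{\theta_2}u$, and the target order $\delta\log(1/\delta)$ is indeed right; but to \emph{inflate} $-E$ you need a \emph{lower} bound on this descent, whereas your parenthetical justification ``integrating $u\le\tan\bar\eta$'' produces only an \emph{upper} bound on the integral. An upper bound on the descent gives $\xi(\theta)\ge\xi_2-O(\delta\log(1/\delta))$, hence only $-E\le O(\delta\log(1/\delta))$, which is useless for the $E$-driven phase. The one-sided comparison $\eta\le\bar\eta$ yields no lower control on $u$, so as written the improved $-E$ bound is unjustified.

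The paper closes this by introducing an auxiliary $\theta_3<\theta_2$ defined by $u(\theta_3)=-1/H(\theta_3)$. On $(\theta_3,\theta_2)$ one then has $u\ge -1/H(\theta_3)$ by monotonicity of $u$, so the error term satisfies $|E|/u\le m\,|H(\theta_3)|$ and its integral over $(\theta_3,\theta_2)$ is $O(1)$; this upgrades the comparison to a \emph{two-sided} estimate $u(\theta)\asymp\delta/(\tfrac\pi2-\theta)$ on that interval. Integrating the lower half gives $\xi_2-\xi(\theta_3)\gtrsim\delta\ln q$ with $q:=(\tfrac\pi2-\theta_3)/\delta\to\infty$, which is precisely the improved $-E$ bound your final step requires.
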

\begin{proof}
As noted before the assumption that $\xi_0=\frac g4\ln(m)+\epsilon$ is type 1 leads to $(\xi,\theta)$ being the union of two graphs of $\xi$ over $\theta$. In the previous lemmas we investigated the upper graph and found that it ends at the turning point $(\xi_2,\theta_2)$.

The lower graph is then determined by the graph ODE and the initial conditions $\xi(\theta_2)=\xi_2$, $\lim_{\theta\to\theta_2^-}\frac{d\xi}{d\theta}(\theta)=+\infty$. The assumption that $\xi_0$ is type 1 necessitates that for all $\theta\in(\theta^*,\theta_2)$ one has $\frac{d\xi}{d\theta}(\theta)>0$ for the lower graph. In particular if we integrate
\begin{equation}
\frac{\frac{d^2\xi}{d\theta^2}}{\frac{d\xi}{d\theta}(1+(\frac{d\xi}{d\theta})^2)} = - \frac{e^{\frac4g \xi}-m}{\frac{d\xi}{d\theta}}-2H(\theta)\label{eq: log-xigr2}
\end{equation}
from $\frac{\theta^*+\theta_2}2$ to $\theta_2$ one gets
$$-\ln\left(\frac{\frac{d\xi}{d\theta}(\frac{\theta^*+\theta_2}2)}{\sqrt{1+(\frac{d\xi}{d\theta})^2}}\right) \geq -m_2 \ln(\frac\pi2-\theta_2)+O(1),$$
where we used $e^{\frac 4g\xi(\theta)}-m \leq e^{\frac 4g\xi_2}-m \leq 0$. Inverting the above inequality gives the existence of a $c_1>0$ so that:
\begin{equation}
\frac{d\xi}{d\theta}\left(\frac{\theta^*+\theta_2}2\right) \leq \frac{c_1(\frac\pi2-\theta_2)^{m_2}}{\sqrt{1-c_1^2(\frac\pi2-\theta_2)^{2m_2}}}\leq 2c_1 (\frac\pi2-\theta_2)^{m_2}.\label{eq: m-xip-bound2}
\end{equation}
For $m_2>1$ this implies the lemma, since for all $\theta\in(\theta^*,\theta_2)$ one has
$$\frac{d^2\xi}{d\theta^2} (\theta)> -(e^{\frac4g\xi(\theta)}-m) \geq c_2(\frac\pi2-\theta_2)$$
with $c_2>0$ some constant by Lemma \ref{lemma: m-xi2}, and then
$$\frac{d\xi}{d\theta}(\theta)\leq 2c_1(\frac\pi2-\theta_2)^{m_2}- c_2(\frac\pi2-\theta_2)\,(\frac{\theta^*+\theta_2}2-\theta)$$
for $\theta\in(\theta^*,\frac{\theta^*+\theta_2}2)$ and one gets $\frac{d\xi}{d\theta}(\theta)=0$ for one such $\theta$.

For $m_2=1$ one gets first from (\ref{eq: m-xip-bound2}) that $\frac{d\xi}{d\theta}$ takes on all values in $(\frac{c_1}2(\frac\pi2-\theta_2), \infty)$ as $\theta$ varies from $\frac{\theta^*+\theta_2}2$ to $\theta_2$. In particular for $\theta_2$ close enough to $\frac\pi2$ there exists a $\theta_3<\theta_2$ so that $\frac{d\xi}{d\theta}(\theta_3)=-\frac1{H(\theta_3)}$.

By integrating (\ref{eq: log-xigr2}) one shows that $\theta_3$ gets arbitrarily close to $\frac\pi2$ if $\frac\pi2-\theta_2$ is small enough. We carry this out explicitly:

Integrate (\ref{eq: log-xigr2}) from $\theta_3$ to $\theta_2$, the left-hand side evaluates to $\frac{-1}2\ln(1+H(\theta_3)^2)$, which is negative and remains bounded as $\theta_2\to\frac\pi2$ unless $\theta_3$ also gets close to $\frac\pi2$ (since otherwise $H(\theta_3)$ is bounded). For the right-hand side one first notes that the integral over $-\frac{e^{\frac4g \xi}-m}{\frac{d\xi}{d\theta}}$ from $\theta_3$ to $\theta_2$ is positive. The other term on the right-hand side however yields $-\ln(\frac{\pi/2-\theta_2}{\pi/2-\theta_3})+O(1)$ which is, crucially, negative and unbounded unless $\theta_3\to\frac\pi2$ together with $\theta_2$. So a situation where $\theta_2\to\frac\pi2$ but $\theta_3\not\to\frac\pi2$ is impossible.

In fact this integral shows that if $q\defeq\frac{\pi/2-\theta_3}{\pi/2-\theta_2}$ that $q$ grows unboundedly as $\theta_2$ approaches $\frac\pi2$. Finally if we integrate (\ref{eq: log-xigr2}) from $\theta\geq \theta_3$ to $\theta_2$ one gets:
$$-\ln\left(\frac{\frac{d\xi}{d\theta}(\theta)}{\sqrt{1+(\frac{d\xi}{d\theta}(\theta))^2}}\right) +O(1) = -\ln(\frac{\pi/2-\theta_3}{\pi/2-\theta})+O(\frac\pi2-\theta).$$
(Here the $-\frac{e^{\frac4g\xi}-m}{\frac{d\xi}{d\theta}}$ term is bounded by $-H(\theta_3)m = \frac m{\pi/2-\theta_3}+O(1)$. Its contribution to the integral then an $O(1)$ term, since $\frac{\theta_2-\theta}{\pi/2-\theta_3}\leq1$.)

Inverting this expression gives a constant $c_3$ so that $\frac{d\xi}{d\theta}(\theta)\geq c_3 \frac{\pi/2-\theta_2}{\pi/2-\theta}$, and then integrating this from $\theta_3$ to $\theta_2$ gives:
$$\xi(\theta_2)-\xi(\theta_3)\geq c_3(\frac\pi2-\theta_2)\ln q$$
Plugging this into $(**)$ then implies for any $\theta\in(\theta^*,\frac{\theta^*+\theta_2}2)$ that:
$$\frac{d^2\xi}{d\theta^2}(\theta)\geq m-e^{\frac4g \xi(\theta_3)}\geq m(1-e^{-\frac4g c_3(\frac\pi2-\theta_2)\ln q}).$$
Together with (\ref{eq: m-xip-bound2}), which states $\frac{d\xi}{d\theta}(\frac{\theta^*+\theta_2}2)\leq 2c_1 (\frac\pi2-\theta_2)$, and unboundedness of $q$ as $\theta_2\to\frac\pi2$ one recovers $\frac{d\xi}{d\theta}(\theta)=0$ for some $\theta\in(\theta^*,\frac{\theta^*+\theta_2}2)$, provided $\frac\pi2-\theta_2$ is small enough. 

So also in the case $m_2=1$ we get a contradiction to the assumption that $\frac g4\ln m+\epsilon$ was type 1 for $\epsilon$ small enough.
\end{proof}

\subsection{Proof of Proposition \ref{prop: position} (iii)}\label{subsec: (iii)}

\begin{lemma}
If $\xi_0^*$ is of type 3 then:
\begin{enumerate}[label=(\roman*)]
\item There is a $\delta>0$ and a $T>0$ so that $\xi_{\xi_0^*}(t)<\frac g4\ln(m)-\delta$ for all $t>T$.
\item $\theta'_{\xi_0^*}(t)>0$ for all $t>0$.
\item $\lim_{t\to\infty}\theta_{\xi_0^*}(t)=\frac\pi2$.
\end{enumerate}
\end{lemma}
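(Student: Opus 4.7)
My plan is to prove the three items in order, using the definition of type 3 together with Lemmas \ref{lemma: extrema}, \ref{lemma: r-turn}, \ref{lemma: theta-turn} and the already-established fact that $\xi_0^* > \frac g4 \ln m$.

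For (ii), I will argue by contradiction. By definition of type 3, $\theta(t)\neq\theta^*$ for all $t>0$, and $\theta'(0)=\sin(2\theta^*)>0$, so by continuity $\theta(t)>\theta^*$ for all $t>0$. If $\theta'(t_0)=0$ at some $t_0>0$, then Lemma \ref{lemma: extrema}(ii) forces $\theta''(t_0)<0$ (since $\theta(t_0)>\theta^*$), so $\theta$ has a local maximum there; Lemma \ref{lemma: theta-turn} then makes $\theta$ return to $\theta^*$ in finite time, contradicting type 3. Hence $\theta'(t)>0$ for all $t>0$.

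For (i), I use Proposition \ref{prop: position}(ii), namely $\xi_0^*>\frac g4\ln m$, so that Lemma \ref{lemma: extrema}(i) gives $\xi''(0)<0$ and thus $\xi'(t)<0$ for small $t>0$. Type 3 forbids $\xi'$ from ever vanishing, so $\xi'(t)<0$ for all $t>0$. Lemma \ref{lemma: r-turn} then produces a time $t_1$ with $\xi(t_1)=\frac g4\ln m$, and strict monotonicity makes $\xi(t)<\xi(t_1+1)<\frac g4\ln m$ for every $t>t_1+1$; setting $\delta\defeq \frac g4\ln m - \xi(t_1+1)>0$ and $T\defeq t_1+1$ finishes the argument.

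For (iii), since part (ii) makes $\theta$ strictly increasing and bounded by $\frac\pi2$, it converges to some $L\in(\theta^*,\frac\pi2]$; I want to rule out $L<\frac\pi2$ by contradiction. The combination $\sin\alpha>0$ (from $\theta'>0$) and $\cos\alpha<0$ (from $\xi'<0$), together with continuity and $\alpha(0)=\frac\pi2$, confines $\alpha(t)$ to the interval $(\frac\pi2,\pi)$ for all $t>0$. If $L<\frac\pi2$ then $\theta'\to 0$ and $\sin(2L)>0$ force $\sin\alpha\to 0$, hence $\alpha(t)\to\pi$. Then $\xi'=\cos\alpha\sin(2\theta)\to-\sin(2L)<0$, so $\xi\to-\infty$ and the exponential term in the $\alpha$-equation of $(*)$ vanishes in the limit, giving
\[
\alpha'(t)\longrightarrow 0\cdot\sin(2L)\cdot(-m)+2(-1)\,l(L)=-2\,l(L)>0,
\]
since $l(L)<0$ for $L>\theta^*$. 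This is incompatible with $\alpha(t)\to\pi^-$ (an increasing function with derivative bounded below by a positive constant cannot stay below $\pi$), so $L=\frac\pi2$.

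The main obstacle is the limiting analysis in (iii): one has to verify that the three pieces $\theta\to L$, $\xi\to-\infty$, $\alpha\to\pi$ are forced simultaneously and only then extract a contradiction from the sign of $l(L)$. All the other steps are essentially direct applications of the already-available lemmas on monotonicity and forced crossings.
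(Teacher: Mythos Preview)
Your proof is correct and follows essentially the same approach as the paper: parts (i) and (ii) are direct applications of Lemmas \ref{lemma: extrema}, \ref{lemma: r-turn}, \ref{lemma: theta-turn} exactly as in the paper (you merely swap their order), and for (iii) both you and the paper argue that monotone convergence of $\theta$ forces $\theta'\to0$, then rule out $\sin\alpha\to0$ by computing $\alpha'\to-2l(L)>0$ and obtaining a contradiction. Your version is slightly more explicit (you pin down $\alpha\in(\tfrac\pi2,\pi)$ and track $\xi\to-\infty$, whereas the paper only uses that $\xi$ is bounded above to bound $e^{4\xi/g}$), but the route is the same; the one step you state without justification, $\theta'\to0$, is handled in the paper by the one-line remark that boundedness of $\xi$ from above makes all derivatives bounded, which is the standard Barbalat-type argument you implicitly rely on.
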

\begin{proof}
Since $\xi_0^*>\frac g4\ln m$ and $\xi_{\xi_0^*}''(0) = - \sin(2\theta^*)(e^{\frac 4g\xi_0^*}-m)<0$, $\xi_{\xi_0^*}'(0)=0$ one has that $\xi_{\xi_0^*}'(t)$ becomes negative for small times $t$. Then by Lemma \ref{lemma: r-turn} it must reach $\frac g4\ln(m)$ in finite time. Since $\xi_{\xi_0^*}'(t)<0$ for all $t>0$ this yields part (i).

One also has that $\theta_{\xi_0^*}'(t)\neq 0$ for all $t>0$, as otherwise Lemma \ref{lemma: theta-turn} implies that $\theta_{\xi_0^*}=\theta^*$ in finite time. This gives part (ii).

$\theta_{\xi_0^*}$ is then monotonous in $t$ and bounded by $\frac\pi2$, whence it converges. Since $\xi_0^*$ is bounded above one has that all derivatives of the parameters are bounded and hence $\theta'_{\xi_0^*}(t)=\sin\alpha\,\sin(2\theta)\to0$. It is clear that $\sin\alpha$ cannot converge to $0$ as $\alpha'(t)$ would then be asymptotically equal to $-2l(\theta)$, which does not converge to $0$. Hence $\theta\to\frac\pi2$, giving part (iii).
\end{proof}

\begin{lemma}
$\xi_0^*$ is not of type 3.
\end{lemma}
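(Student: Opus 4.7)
The plan is to assume $\xi_0^*$ is of type 3 and derive a contradiction by exploiting the presence of arbitrarily nearby type 1 initial conditions. By the preceding lemma the type 3 trajectory satisfies $\theta_{\xi_0^*}(t)\to\pi/2$ monotonically, $\xi_{\xi_0^*}(t)<\tfrac{g}{4}\ln m-\delta$ for $t$ large, and $\xi'_{\xi_0^*}(t)<0$ for all $t>0$ (the last using $\xi_0^*>\tfrac{g}{4}\ln m$ from part (ii)). By the infimum property there is a sequence $\epsilon_n\searrow 0$ with each $\xi_0^*+\epsilon_n$ of type 1; denote its witness time by $T_n>0$.

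First I would argue, via continuous dependence on initial conditions together with $\theta_{\xi_0^*}(t)>\theta^*$ for all $t>0$, that $T_n\to\infty$; and as in the proof of Proposition \ref{prop: type12} that $\xi'_{\xi_0^*+\epsilon_n}(t)<0$ throughout $(0,T_n)$. Since $\theta_{\xi_0^*+\epsilon_n}$ both starts and returns to $\theta^*$ on $[0,T_n]$, it attains a maximum at some time $t_{\max,n}\in(0,T_n)$; at this time $\sin\alpha=0$ together with $\cos\alpha<0$ forces $\alpha(t_{\max,n})=\pi$ (mod $2\pi$). Continuous dependence on successively longer intervals, combined with $\theta_{\xi_0^*}(t)\to\pi/2$, then yields $\theta(t_{\max,n})\to\pi/2$ and $\xi(t_{\max,n})<\tfrac{g}{4}\ln m-\delta/2$ for $n$ large.

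The contradiction should come from analysing the descending arc $(t_{\max,n},T_n)$, on which $\sin\alpha<0$ and $\cos\alpha<0$, so $\alpha\in(\pi,\tfrac{3\pi}{2})$ (the upper endpoint being forbidden since $\cos\alpha=0$ would mean $\xi'=0$). Setting $\beta\defeq\alpha-\pi$ and $s\defeq\tfrac{\pi}{2}-\theta$, the system $(*)$ transforms into
\[
\frac{d\beta}{ds}=\bigl(m-e^{\frac{4}{g}\xi}\bigr)+2\cot\beta\cdot|H(\theta)|,
\]
with both summands strictly positive on the arc. The key technical step exploits the pole $|H(\theta)|\sim m_2/(2s)$ as $s\to 0^+$ coming from $\tan\theta$: integrating the resulting differential inequality $\tan\beta\,d\beta\ge c_1 s^{-1}\,ds$ on $[s_{\min,n},s_0]$ (for $s_0\in(0,s^*)$ fixed and small, $s^*\defeq\tfrac{\pi}{2}-\theta^*$) yields a bound $\cos\beta(s_0)\le (s_{\min,n}/s_0)^{c_2}$ for some constants $c_1,c_2>0$, so $\beta(s_0)\to\pi/2$ as $s_{\min,n}\defeq\tfrac{\pi}{2}-\theta(t_{\max,n})\to 0$. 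On $[s_0,s^*]$ one has $\tfrac{d\beta}{ds}\ge m(1-e^{-2\delta/g})>0$ by keeping only the first summand, which together with the previous estimate forces $\beta(s^*)>\pi/2$ for $n$ large. But $\beta$ crossing $\pi/2$ corresponds to $\cos\alpha=0$ and hence $\xi'=0$, contradicting the type 1 hypothesis. The main obstacle is the singular integral estimate at $s=0$ capturing the logarithmic divergence of $\int|H|\,ds$; the rest is continuous-dependence bookkeeping.
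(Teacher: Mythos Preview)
Your argument is correct and follows the same strategy as the paper: use nearby type~1 trajectories, push the $\theta$-maximum to $\pi/2$ via continuous dependence while $\xi$ sinks below $\tfrac{g}{4}\ln m-\delta$, then show on the descending arc that the slope is forced to vanish before $\theta$ returns to $\theta^*$. The only difference is cosmetic: you work with the angle $\beta=\alpha-\pi$ parametrised by $s=\tfrac{\pi}{2}-\theta$, whereas the paper works with the slope $\tfrac{d\xi}{d\theta}$ on the lower graph via $(**)$. Since $\cot\beta=\tfrac{d\xi}{d\theta}$ and $\tfrac{d\xi/d\theta}{\sqrt{1+(d\xi/d\theta)^2}}=\cos\beta$, your inequality $\tan\beta\,d\beta\ge c_1 s^{-1}\,ds$ is exactly the paper's integration of $\frac{\xi''}{\xi'(1+(\xi')^2)}=-\frac{e^{4\xi/g}-m}{\xi'}-2H(\theta)$, and your conclusion $\cos\beta(s_0)\to 0$ is the paper's $\tfrac{d\xi}{d\theta}(x)\to 0$ at the fixed abscissa $x=\tfrac{\theta^*+\pi/2}{2}$. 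The final linear push ($\tfrac{d\beta}{ds}\ge m(1-e^{-2\delta/g})$ for you, $\tfrac{d^2\xi}{d\theta^2}>\tfrac{4}{g}m\delta$ for the paper) is likewise the same estimate in the two coordinate systems.

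One small point worth making explicit in your write-up: to get $\xi(t_{\max,n})<\tfrac{g}{4}\ln m-\delta/2$ you need $t_{\max,n}\to\infty$, not just $T_n\to\infty$; this follows because the limiting type~3 trajectory has $\theta'_{\xi_0^*}(t)>0$ for all $t\ge 0$, so continuous dependence on any compact interval forces the first zero of $\theta'_{\xi_0^*+\epsilon_n}$ to escape to infinity. With that in hand, monotonicity of $\xi$ plus continuous dependence at a single fixed time $T$ (from the preceding lemma) gives the bound.
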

\begin{proof}
Let $(\xi_\epsilon(t),\theta_\epsilon(t))$ denote the solution to $(*)$ with initial condition $(\xi, \theta, \alpha)\ (t=0)=(\xi_0^*+\epsilon, \theta^*, \frac\pi2)$ where $\epsilon>0$. Note that $\xi_0^*+\epsilon$ is type 1 by the definition of $\xi_0^*$, in particular there is a $T_1(\epsilon)$ so that $\theta_\epsilon(t)$ has a local maximum and $\xi_\epsilon'(T_1)<0$ for all $t\in(0,T_1]$.

Assuming that $\xi_0^*$ is of type 3 and using that $\xi_\epsilon$ and $\theta_\epsilon$ converge uniformly on compacta to $\xi_{\epsilon=0}$ and $\theta_{\epsilon=0}$ as $\epsilon\to0$ one finds that $\theta_\epsilon(T_1)\to \frac\pi2$ as $\epsilon\to0$. This also implies $T_1(\epsilon)\to\infty$ for $\epsilon\to0$, giving for $\epsilon$ small enough that one has $\xi_\epsilon(T_1)<\frac g4\ln(m)-\delta$.

After the extremum at $\theta_\epsilon(T_1)$ one has that $\xi_\epsilon$ becomes a graph over $\theta$ because no more extrema of $\theta$ are possible until after $\theta=\theta^*$. As in Lemma \ref{lemma: m-xi1} the graph ODE for $\xi$ yields:
$$\frac{\frac{d^2\xi_\epsilon}{d\theta^2}}{\frac{d\xi_\epsilon}{d\theta}(1+(\frac{d\xi}{d\theta})^2)} = -\frac{e^{\frac 4g\xi_\epsilon}-m}{\frac{d\xi_\epsilon}{d\theta}}-2H(\theta).$$
Integrating this for the lower graph from $x\defeq\frac{\theta^*+\pi/2}2$ to $\theta_\epsilon(T_1)$ one gets
$$-\ln\left(\frac{d\xi_\epsilon/d\theta}{\sqrt{1+(\frac{d\xi_\epsilon}{d\theta})^2}}(x)\right) \geq -m_2 \ln(\frac\pi2-\theta_\epsilon(T_1))+O(1)$$
where $-\frac{e^{\frac 4g\xi}-m}{\frac{d\xi}{d\theta}}\geq 0$ and $\frac{d\xi}{d\theta}(\theta(T_1))=+\infty$ were used.

As $\epsilon\to0$ this implies that $\frac{d\xi_\epsilon}{d\theta}(x)$ becomes arbitrarily small, in particular we may assume it to be smaller than $\frac{4m}g \frac12(x-\theta^*)\delta$. Remarking however that if $\xi<\frac g4\ln(m)-\delta$ then $-(e^{\frac 4g\xi}-m) > \frac4g m \delta$, one gets:
$$\frac{d^2\xi_\epsilon}{d^2\theta}(\theta) > \frac4g m \delta$$
for all $\theta\in(\theta^*, x)$. With $\frac{d\xi_\epsilon}{d\theta}(x)<\frac{4m}g\frac12(x-\theta^*)\delta$ one immediately gets $\frac{d\xi_\epsilon}{d\theta}(\frac{\theta^*+x}2) <0$, contradicting that $\xi_\epsilon$ is type 1.
\end{proof}

\bibliographystyle{amsplain}
\bibliography{my}

\end{document}